\newcommand{\Hd}{\mathcal H}
\newcommand{\Lb}{\mathcal L}
\newcommand{\N}{\mathcal N}
\newcommand{\M}{\mathcal M}
\newcommand{\K}{\mathcal K}
\newcommand{\A}{\mathcal A}
\newcommand{\Ric}{\mathcal Ric}
\newcommand{\R}{\mathbb R}
\newcommand{\bv}{{\boldsymbol v}}
\newcommand{\bu}{{\boldsymbol u}}
\newcommand{\bw}{{\boldsymbol w}}
\newcommand{\bz}{{\boldsymbol z}}
\newcommand{\bX}{{\boldsymbol X}}
\newcommand{\bY}{{\boldsymbol Y}}
\newcommand{\bZ}{{\boldsymbol Z}}
\newcommand{\bW}{{\boldsymbol W}}
\newcommand{\bN}{{\boldsymbol N}}
\newcommand{\bn}{{\boldsymbol n}}
\newcommand{\bp}{{\boldsymbol p}}
\newcommand{\bP}{{\boldsymbol P}}
\newcommand{\dd}{\,\mathrm d}
\newcommand{\TV}{\mathrm{TV}}
\newcommand{\norm}{{\boldsymbol \nu}}
\newcommand{\bx}{{\boldsymbol x}}
\newcommand{\by}{{\boldsymbol y}}
\renewcommand{\div}{\mathrm{div}\,}
\newcommand{\ess}{\mathrm{ess}\,}
\newcommand{\supp}{\mathrm{supp}\,}
\newcommand{\eps}{\varepsilon}
\newcommand{\dist}{\mathrm{dist}}
\newtheorem{prop}{Proposition}
\newtheorem{thm}{Theorem}
\newtheorem{lemma}{Lemma}
\newtheorem{defn}{Definition}
\def\onedot{$\mathsurround0pt\ldotp$}
\def\cddot{% two dots stacked vertically
  \mathbin{\vcenter{\baselineskip.67ex
    \hbox{\onedot}\hbox{\onedot}}%
  }}%
\def\cdddot{% three dots
  \mathbin{\vcenter{\baselineskip.67ex
    \hbox{\onedot}\hbox{\onedot}\hbox{\onedot}%
  }}%
}
\providecommand{\customgenericname}{}
\newcommand{\newcustomtheorem}[2]{%
  \newenvironment{#1}[1]
  {%
   \renewcommand\customgenericname{#2}%
   \renewcommand\theinnercustomgeneric{##1}%
   \innercustomgeneric
  }
  {\endinnercustomgeneric}
}
\theoremstyle{remark}
\newtheorem*{remark}{Remark}
\author{Lorenzo Giacomelli*, Micha{\l} {\L}asica*$^\dagger$, Salvador
Moll$^\ddagger$ \\
\scriptsize *SBAI Department, Sapienza University of Rome, \\
\scriptsize Via Antonio Scarpa, 16, 00161 Roma, Italy \\
\scriptsize $^\dagger$Institute of Applied Mathematics and Mechanics, University
of Warsaw, \\
\scriptsize Banacha 2, 02-097 Warszawa, Poland \\
\scriptsize $^\ddagger$Department of Mathematical Analysis, University of
Valencia, \\
\scriptsize C/Dr. Moliner, 50, Burjassot, Spain
}
\title{Regular $1$-harmonic flow}
\date{\today}
\begin{document}
 \maketitle

\begin{abstract}
We consider the $1$-harmonic flow of maps from a bounded domain into a
submanifold of a Euclidean space, i.\,e.\;the gradient flow of the total
variation functional restricted to maps taking values in the manifold. We
restrict ourselves to Lipschitz initial data. We prove uniqueness and, in the
case of a convex domain, local existence of solutions to the flow equations.
If the target manifold has non-positive sectional curvature or in the case that
the datum is small, solutions are shown to exist globally and to become
constant in finite time. We also consider the case where the domain is a
compact Riemannian manifold without boundary, solving the homotopy
problem for $1$-harmonic maps under some assumptions.
\end{abstract}
\medskip

{\small
Key words and phrases: total varation flow, $1$-harmonic flow, 
$1$-harmonic maps, quasilinear parabolic system, Riemannian
manifold, existence, uniqueness, asymptotic behavior, image processing.

\smallskip
\noindent 2010 Mathematics Subject Classification: 35K51, 35A01, 35A02, 35B40, 
35D35, 35K92, 35R01, 53C21, 68U10.
}

\renewcommand{\baselinestretch}{0.75}\normalsize
\tableofcontents
\renewcommand{\baselinestretch}{1}\normalsize

\section{Introduction}
Let $(\N, g)$ be a complete, connected smooth $n$-dimensional Riemannian
manifold (without boundary). Throughout the paper, without loss of generality
\cite{nash, greene}, we will treat it as an isometrically embedded submanifold
in the Euclidean space $\R^N$. Given an open, bounded
Lipschitz domain $\Omega \subset \R^m$ we consider the formal steepest descent
flow with respect to the $L^2$ distance of the functional
$\TV_\Omega^\N$: the total
variation functional constrained to functions taking values in $\N$, given
for smooth $\bu$
by
\begin{equation}
 \label{TV}
 \TV_\Omega^\N[\bu] = \int_\Omega |\nabla \bu|.
\end{equation}

Following the $L^2$-steepest descent flow is one way of controlled decreasing
$\TV_\Omega^\N$, which is a problem appearing in image processing. Besides the
case $\N\subseteq \mathbb S^{N-1}$, which appears in denoising of optical
flows \cite{TSC00} or color images \cite{TSC01}, other examples of targets
appearing in applications include the space of isometries $SO(3)\times \mathbb
R^3$ \cite{lellmann}, the cylinder $\R^2
\times \mathbb S^1$ (LCh color space) \cite{weinmanndemaretstorath} and the
space of positive definite symmetric matrices (diffusion tensors) $Sym_+(3)$
\cite{weinmanndemaretstorath}. All of these examples are homogeneous
spaces, and therefore have natural invariant metrics. Our main goal in this
paper is to develop a well-posedness theory for the flow in a generality
encompassing these cases. As some of these manifolds are non-compact, we
refrain from the unnecessary (although convenient) assumption of compactness of
$\N$.

Given a point $\bp \in \N$, we denote by
\[ \pi_\bp \colon T_{\bp} \R^N \equiv \R^N \to T_{\bp} \N\]
the orthogonal projection onto the tangent space of $\N$ at $\bp$, $T_{\bp}
\N$. Similarly, $\pi^\perp_\bp$ will denote the orthogonal projection of
$\R^N$ onto the normal space $T_{\bp} \N^\perp$. The centered dot
will denote the Euclidean scalar product on $\R^m$ or $\R^N$, while
$k$ stacked dots will denote the induced scalar product on a Cartesian
product of any $k$-tuple of these spaces. 
Calculating the first
variation of \eqref{TV} at $\bu$, one obtains that the
flow in a time interval $[0,T[$ starting with initial datum $\bu_0$ is formally
given by the system
\begin{equation}
\label{ctvflowf}
\bu_t = \pi_\bu\left( \div \tfrac{\nabla \bu}{|\nabla \bu|} \right)
\quad
\text{in }]0,T[\times \Omega,
\end{equation}
\begin{equation}
 \label{Neumannf}
 \norm^\Omega  \cdot \tfrac{\nabla \bu}{|\nabla \bu|} = 0 \quad
\text{in }]0,T[\times \partial \Omega,
\end{equation}
\begin{equation}
 \label{init}
 \bu(0, \cdot) \equiv \bu_0.
 \end{equation}

The meaning of the expression $\frac{\nabla \bu}{|\nabla \bu|}$ in
(\ref{ctvflowf}, \ref{Neumannf})
deserves a clarification even for smooth $\bu$: we understand $\frac{\nabla
\bu}{|\nabla \bu|}$ as a multifunction
\begin{equation}
 \tfrac{\nabla \bu}{|\nabla \bu|} \colon (t,\bx) \mapsto
\left\{\begin{array}{ll}
                                                       \frac{\nabla
\bu(t,\bx)}{|\nabla \bu(t,\bx)|} & \text{ if }\nabla \bu(t,\bx) \neq
\boldsymbol 0 \\
B(0,1) \subset \R^m \times T_{\bu(t,\bx)} \N & \text{ if } \nabla \bu(t,\bx) =
\boldsymbol 0
\end{array}\right.
\end{equation}
and require that (\ref{ctvflowf}, \ref{Neumannf}) are satisfied for an
appropriate selection. This is formalized in the following definition, which is
an adapted version of \cite[Definition 2.5]{acmbook}. Here and in the following
we will use the notation
\begin{equation}
 X(U, \N) = \{ \bw \in X(U, \R^N) \colon \bw(\by) \in \N
 \text{ for a.\,e. }
\by \in U\},
\end{equation}
where $U$ is any domain in $\R^l$ (or a compact $l$-dimensional Riemannian
manifold), $l=1,2,\ldots$ and $X(U, \R^N)$ is a
subspace of $L^1_{loc}(U, \R^N)$.
\begin{defn}
\label{defsolution}
 Let $T \in ]0, \infty]$. We say that
 \[\bu \in W^{1,2}_{loc}([0, T[\times \overline\Omega,
\N)\text{ with } \nabla \bu \in L^\infty_{loc}([0,T[ \times \overline\Omega,
\R^{m\cdot N})\]
is a (regular)
solution to
(\ref{ctvflowf}) (in $[0,T[$) if there exists $\bZ \in
L^\infty(]0,T[\times\Omega,
\R^{m\cdot N})$ with $\div \bZ \in L^2_{loc}([0,T[ \times \overline\Omega,
\R^N)$
satisfying
\begin{equation}
\label{inclZ}
 \bZ \in \tfrac{\nabla \bu}{|\nabla \bu|},
\end{equation}
\begin{equation}
\label{ctvflowZ}
 \bu_t = \pi_\bu( \div \bZ)
\end{equation}
$\Lb^{1+m}-\text{a.\,e. in }]0,T[\times \Omega$. We say that a regular solution $\bu$ to (\ref{ctvflowf}) satisfies (homogeneous) Neumann boundary condition (\ref{Neumannf}) if
\begin{equation}
 \label{NeumannZ}
 \norm^\Omega \cdot \bZ = \boldsymbol 0
\end{equation}
$\Lb^1 \otimes \Hd^{m-1}-\text{a.\,e. in }]0,T[\times \partial \Omega$.
\end{defn}

\begin{remark}
Due to Morrey embedding theorem, any regular solution to \eqref{ctvflowf} has a
representative that is locally H\" older continuous on
$[0,T[\times\overline{\Omega}$ \cite[Theorem 5]{hasko}. We will identify it with
this representative. In particular, the initial condition \eqref{init} can be
understood pointwise. On the other hand, $\norm^\Omega
\cdot \bZ$ in \eqref{NeumannZ} has to be understood as the normal trace of
an $L^\infty$ vector field with integrable divergence, as defined in 
\cite{temamsummable,anzellotti1}.
\end{remark}

If conditions in Definition \ref{defsolution} are satisfied, we will often
say that the pair $(\bu, \bZ)$ is a (regular) solution to
\eqref{ctvflowf} and/or \eqref{Neumannf}. We will often use equivalent (see 
e.\,g.\;the proof of Lemma \ref{bochnerlemma}) form of
\eqref{ctvflowZ}:
\begin{equation}\label{flowA}
 \bu_t = \div \bZ + \A_\bu(\bu_{x^i}, \bZ_i),
\end{equation}
where $\A_\bp$ denotes the second fundamental form of $\N$ at $\bp \in \N$.
Here and throughout
the paper, we use Einstein's summation convention.

The adjective \emph{regular} in Definition \ref{defsolution} is justified by
the following considerations. Firstly, $W^{1,\infty}(\Omega)$ is the highest
Sobolev regularity that is preserved by the scalar total variation flow
\cite{kielak, bonfortefigalli}. Secondly,
such attribute distinguishes the class of solutions in Definition
\ref{defsolution} from weak (energy) solutions, whose natural spatial regularity
is $BV(\Omega)$. However, we note that in the constrained case, even defining a
proper concept of
solution is non-trivial in the $BV$ setting, the crucial issue being an
appropriate identification of the right-hand side of \eqref{ctvflowZ} or of
\eqref{flowA}. In this regard, the only case considered so far is $\N\subseteq
\mathbb S^n$, in which \eqref{flowA} drastically simplifies due to the isotropy
of the sphere:
$$
 \bu_t = \div \bZ + \bu|\nabla \bu|.
$$
Suitably defined solutions to (\ref{ctvflowf}, \ref{Neumannf}) have been obtained in \cite{gmmn} when the initial datum is contained in an hyper-octant of $\mathbb S^n$ \cite{gmmn}. When $n=1$, the assumption on $u_0$  may be relaxed and uniqueness results are available too \cite{gmm1}. A notion of solution extending the one in \cite{gmm1,gmmn} to $(N-1)$-dimensional manifolds with unique geodesics has been proposed in \cite{dcgcurve}. Existence of solutions for a discretized Dirichlet problem for \eqref{ctvflowf} in the case $\N = \mathbb S^n$, $m=2$ has been obtained in \cite{gigakurodayamazaki}.
%
%. First of all, one has to decide the meaning of $|\nabla \bu|$ on the jump part of $\bu$, where $\nabla \bu = (\bu^+ - \bu^-)\otimes \norm^{\bu}$, $\bu^\pm \in \N$, $\norm^{\bu} \in \mathbb S^{m-1}$. One possible choice is to take as $|\nabla \bu|$ the distance between $\bu^+$ and $\bu^-$ in the ambient (Euclidean) metric on $\mathbb R^N$; the second one is to take the intrinsic distance on $\N$ (clearly, only the second one can produce \ml{results that are Riemannian invariant for every $\N$}). So far, existence of solutions in the first sense is known only for a discretized Dirichlet problem for \eqref{ctvflowf} in the case $\N = \mathbb S^n$, $m=2$ \cite{gigakurodayamazaki}. Suitably defined solutions to (\ref{ctvflowf}, \ref{Neumannf}) in the second sense are known to exist in the case $\N = \mathbb S^n$ \cite{gmm1, gmmn} (special form of \eqref{ctvflowf} in this case is heavily used there) or a curve \cite{dcgcurve}.
%

The validity of Definition \ref{defsolution} is supported by the well-posedness
results that we obtain. First of all, regular solutions are unique.

\begin{thm}
\label{uniqueness}
 Suppose that $\bu_1, \bu_2$ are two regular solutions to (\ref{ctvflowf},
\ref{Neumannf}) in $[0,T[$, $T \in ]0, \infty[$ such that $\bu_1(0,
\cdot) = \bu_2(0, \cdot) = \bu_0$. Then $\bu_1 \equiv \bu_2$.
%
%\ml{If $\bu, \bu_1, \bu_2, \ldots$ are regular solutions to (\ref{ctvflowf}, \ref{Neumannf}) in $[0,T[$, $T \in ]0, \infty]$ such that $(\bu_{k,t})$ is uniformly bounded in $L^2_{loc}([0,T[\times \overline{\Omega}, \R^N)$, $(\nabla \bu_k)$ is uniformly bounded in $L^\infty_{loc}([0,T[\times \overline{\Omega}, \R^{m\cdot N})$ and $\bu_k(0, \cdot) \to \bu(0, \cdot)$ in $L^2(\Omega, \N)$ as $k \to \infty$, then $\bu_k \to \bu$ in $C([0,T[, L^2(\Omega, \N))$. }
%
\end{thm}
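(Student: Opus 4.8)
The plan is to prove uniqueness by an energy (i.e.\ $L^2$-contraction) argument, comparing the two solutions directly. Write $\bu = \bu_1$, $\bv = \bu_2$, with associated vector fields $\bZ, \bW \in L^\infty$ satisfying $\bZ \in \tfrac{\nabla \bu}{|\nabla \bu|}$, $\bW \in \tfrac{\nabla \bv}{|\nabla \bv|}$, the equations $\bu_t = \pi_\bu(\div \bZ)$, $\bv_t = \pi_\bv(\div \bW)$, and the Neumann conditions $\norm^\Omega \cdot \bZ = \norm^\Omega \cdot \bW = 0$. The natural quantity to differentiate is $\tfrac{1}{2}\tfrac{d}{dt}\int_\Omega |\bu - \bv|^2$. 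Using \eqref{ctvflowZ} we get $\tfrac{d}{dt}\tfrac12\int_\Omega |\bu-\bv|^2 = \int_\Omega (\bu-\bv)\cdot(\pi_\bu \div \bZ - \pi_\bv \div \bW)$. The first difficulty, and the one I expect to be the main obstacle, is that the projections $\pi_\bu$ and $\pi_\bv$ are evaluated at different points, so one cannot immediately integrate by parts symmetrically. The remedy is to split $(\bu-\bv)\cdot \pi_\bu(\div\bZ) = (\bu - \bv)\cdot \div\bZ + (\bu-\bv)\cdot(\pi_\bu - \mathrm{id})(\div\bZ)$, and similarly for $\bW$; since $\div\bZ - \pi_\bu(\div\bZ) = \pi_\bu^\perp(\div\bZ)$ is normal at $\bu$, one controls $(\bu - \bv)\cdot \pi_\bu^\perp(\div\bZ)$ by noting that, by smoothness of $\N$, the normal component of $\bu - \bv$ at the point $\bu$ is quadratically small: $|\pi_\bu^\perp(\bu - \bv)| \le C|\bu - \bv|^2$ (this is the standard estimate $\dist(\by, T_\bp\N + \bp) \le C|\by - \bp|^2$ for $\by \in \N$ near $\bp$, valid because both $\bu(t,\bx)$ and $\bv(t,\bx)$ lie on $\N$ and remain in a fixed compact set by the local boundedness assumptions). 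Hence $|(\bu-\bv)\cdot\pi_\bu^\perp(\div\bZ)| \le C|\bu-\bv|^2|\div\bZ|$, with the factor $|\div\bZ|$ in $L^2_{loc}$; an analogous bound holds for the $\bW$ term.

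Next I would handle the "good" part $\int_\Omega (\bu - \bv)\cdot(\div\bZ - \div\bW)$. Integrating by parts, and using the Neumann conditions \eqref{NeumannZ} to kill the boundary terms (here one invokes the Anzellotti--Temam normal-trace theory for $L^\infty$ fields with $L^2$ divergence, exactly as the remark after Definition~\ref{defsolution} anticipates, pairing against $\bu - \bv \in W^{1,2}$), this becomes $-\int_\Omega (\nabla\bu - \nabla\bv)\cdddot(\bZ - \bW)$. Now the key monotonicity: since $\bZ$ is a selection of the subdifferential of $\bw \mapsto |\bw|$ at $\nabla\bu$ and $\bW$ at $\nabla\bv$, convexity of the Euclidean norm gives $(\nabla\bu - \nabla\bv)\cdddot(\bZ - \bW) \ge 0$ pointwise a.e.\ — indeed $\bZ\cdddot\nabla\bu = |\nabla\bu|$, $\bW\cdddot\nabla\bv = |\nabla\bv|$, $\bZ\cdddot\nabla\bv \le |\nabla\bv|$, $\bW\cdddot\nabla\bu \le |\nabla\bu|$, so the cross terms have the right sign. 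Therefore $\int_\Omega(\bu-\bv)\cdot(\div\bZ - \div\bW) \le 0$, and this term is simply discarded.

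Combining, we arrive at a differential inequality of the form
\begin{equation*}
\frac{d}{dt}\,\frac12\int_\Omega |\bu - \bv|^2 \le C\int_\Omega |\bu - \bv|^2\bigl(|\div\bZ| + |\div\bW|\bigr),
\end{equation*}
valid for a.e.\ $t$ in any compact subinterval $[0,T']\subset[0,T[$, with $C$ depending only on $\N$ and on the (finite) $L^\infty$ bound for $\bu,\bv$ on $[0,T']\times\overline\Omega$. Since $\nabla\bu \in L^\infty_{loc}$ and $\div\bZ \in L^2_{loc}$, the function $t \mapsto \||\div\bZ(t,\cdot)| + |\div\bW(t,\cdot)|\|_{L^2(\Omega)}$ is in $L^2(0,T')$, hence in $L^1(0,T')$; but to apply Grönwall I need an $L^1_t$ coefficient multiplying $\int|\bu-\bv|^2$ in $L^\infty$, which is fine because the time-derivative identity pairs an $L^2$ function against an $L^2$ function and $\int|\bu-\bv|^2 \in L^\infty_{loc}$. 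A slightly cleaner route is to estimate $\int_\Omega|\bu-\bv|^2(|\div\bZ|+|\div\bW|) \le \|\bu - \bv\|_{L^\infty}\,\|\bu-\bv\|_{L^2}\,\|\,|\div\bZ|+|\div\bW|\,\|_{L^2}$ and use the a priori $L^\infty$ Hölder bound from the remark, or to absorb one power of $|\bu-\bv|$ using the uniform bound; either way one obtains $\tfrac{d}{dt}\|\bu-\bv\|_{L^2}^2 \le \phi(t)\|\bu - \bv\|_{L^2}^2$ with $\phi \in L^1_{loc}([0,T[)$. Since $\|\bu(0,\cdot) - \bv(0,\cdot)\|_{L^2} = 0$ by \eqref{init}, Grönwall's inequality forces $\bu \equiv \bv$ on $[0,T'[$ for every $T' < T$, hence on $[0,T[$. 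The one genuinely delicate point throughout is the rigorous justification of the integration by parts with the normal-trace pairing and of the chain rule $\tfrac{d}{dt}\tfrac12\int|\bu-\bv|^2 = \int(\bu-\bv)\cdot(\bu_t - \bv_t)$ for maps that are only $W^{1,2}_{loc}$ in time; this is where the $L^\infty_{loc}$ control on $\nabla\bu$, $\nabla\bv$ (so that $\div\bZ, \div\bW \in L^2_{loc}$ pair against $\bu-\bv$) and the Anzellotti trace machinery do the essential work.
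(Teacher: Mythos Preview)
Your overall strategy coincides with the paper's: compute $\tfrac{1}{2}\tfrac{\dd}{\dd t}\int_\Omega|\bu-\bv|^2$, use monotonicity of the subdifferential of the norm to discard the term $\int_\Omega(\bu-\bv)\cdot(\div\bZ-\div\bW)\le 0$, and control the remainder through the quadratic estimate $|\pi_\bu^\perp(\bu-\bv)|\le C|\bu-\bv|^2$. That part is correct.

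The gap is in closing Gr\"onwall. From your decomposition you arrive at
\[
\tfrac{\dd}{\dd t}\tfrac{1}{2}\int_\Omega |\bu-\bv|^2 \le C\int_\Omega |\bu-\bv|^2\bigl(|\div\bZ|+|\div\bW|\bigr),
\]
and you then assert this gives $\tfrac{\dd}{\dd t}\|\bu-\bv\|_{L^2}^2\le \phi(t)\|\bu-\bv\|_{L^2}^2$ with $\phi\in L^1_{loc}$. It does not: $|\div\bZ|$ is only in $L^2_x$, not $L^\infty_x$, so the best you can extract (by H\"older and $|\bu-\bv|\le M$) is $Y'(t)\le CM\,\|\div\bZ+\div\bW\|_{L^2}\,Y(t)^{1/2}$, i.e.\ a bound on $(\sqrt{Y})'$ by an $L^1$ function, which integrates to something positive and does \emph{not} force $Y\equiv 0$. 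Neither of the two routes you sketch repairs this: absorbing one factor of $|\bu-\bv|$ by $M$ still leaves $\sqrt{Y}$, not $Y$, on the right.

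The fix, which is exactly what the paper does, is to observe that $\pi_\bu^\perp(\div\bZ)$ is \emph{not merely} bounded by $|\div\bZ|$: since $\bZ$ is tangent to $\N$ along $\bu$, one has the pointwise identity $\pi_\bu^\perp(\div\bZ)=-\A_\bu(\bu_{x^j},\bZ_j)$ (this is \eqref{pitoA}/\eqref{flowA} in the paper). The right-hand side is in $L^\infty(]0,T'[\times\Omega)$ because $\A$ is smooth on the compact set $K$, $|\bZ|\le 1$, and $\nabla\bu\in L^\infty$. With this, the error term is bounded by $C_2\int_\Omega|\bu-\bv|^2$ with a genuine constant $C_2$ depending only on $K$ and $\|\nabla\bu\|_{L^\infty},\|\nabla\bv\|_{L^\infty}$, and Gr\"onwall closes immediately. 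So the missing ingredient is precisely the rewriting of the normal part of $\div\bZ$ via the second fundamental form, which upgrades its integrability from $L^2$ to $L^\infty$.
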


The proof Theorem \ref{uniqueness} is different from the proofs of analogous
results for $p$-harmonic flow in \cite{hunger1,fr1} in that we do not use
strict monotonicity of the $p$-Laplace operator (since it does not hold
for $p=1$).

Provided that $\Omega$ is convex, we are able to construct local-in-time
Lipschitz solutions to (\ref{ctvflowf}, \ref{Neumannf}). We need the assumption
of convexity, as we are forced to use global $L^p$ estimates for $\nabla\bu$.
Localization of these estimates is not available due to the strong 
degeneracy of the $1$-Laplace operator $\div \frac{\nabla \bu}{|\nabla\bu|}$. 
The assumption of
convexity is not very restrictive from the point of view of image processing, as
typical domains in applications are rectangles (or boxes of different
dimensions).

The existential theory depends on the sectional curvature $\K_\N$
of $\N$ or, equivalently, on the Riemannian curvature tensor $\mathcal R^\N$ of
$\N$. We denote by $K_\N$ the supremum of sectional curvature over $\N$, i.\,e.
\begin{equation}
\label{sectionalcurvature}
K_\N = \sup_{\bp \in \N} \max_{\bv, \bw \in T_{\bp} \N
\setminus \{0\}}
\frac{\bv \cdot \mathcal R^\N_\bp(\bv, \bw) \bw}{|\bv|^2 |\bw|^2}.
\end{equation}
Recall that $K_{SO(n) \times \mathbb R^n}$ is positive (and finite)
and $K_{\mathbb S^1 \times \R^n}$, $K_{Sym_+(n)}$ are non-positive.

\begin{thm}
\label{thmexistence}
 Suppose that
$\Omega$ is convex, the embedding of $\N$ in $\mathbb R^N$ is closed and $K_\N
< \infty$. Given $\bu_0 \in W^{1, \infty}(\Omega)$,
there exists $T = T(\N, \|\nabla \bu_0
\|_{L^\infty})$ and a regular solution $\bu$ to
(\ref{ctvflowf}, \ref{Neumannf}, \ref{init}) in $[0,T[$ satisfying the
energy inequality
\begin{equation}
\label{energyinequality}
\ess \sup_{t \in [0,T[} \int_\Omega |\nabla \bu(t, \cdot)| +
\int_0^T \int_\Omega \bu_t^2 \leq \int_\Omega |\nabla \bu_0|.
\end{equation}
\end{thm}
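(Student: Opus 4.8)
The plan is to construct the solution via an implicit time-discretization (minimizing-movements / Rothe) scheme and to pass to the limit, using the convexity of $\Omega$ and global $L^p$-estimates for $\nabla\bu$ to obtain the Lipschitz bound on a short time interval. Concretely, fix a time-step $h>0$ and define $\bu^0 = \bu_0$, then iteratively let $\bu^{k}$ solve the constrained minimization problem
\begin{equation*}
 \bu^k \in \mathrm{arg\,min}\Big\{ \TV_\Omega^\N[\bw] + \tfrac{1}{2h}\int_\Omega |\bw - \bu^{k-1}|^2 \ :\ \bw \in BV(\Omega,\N)\cap L^2 \Big\}.
\end{equation*}
Existence of a minimizer follows from the direct method (lower semicontinuity of $\TV$ under the manifold constraint, which is closed), and the Euler--Lagrange equation gives $\tfrac{\bu^k - \bu^{k-1}}{h} = \pi_{\bu^k}(\div \bZ^k)$ for some $\bZ^k \in \tfrac{\nabla\bu^k}{|\nabla\bu^k|}$ with $\norm^\Omega\cdot\bZ^k = \boldsymbol 0$, at least after establishing enough regularity of $\bu^k$ to make sense of this. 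Summing the standard one-step energy inequality $\TV_\Omega^\N[\bu^k] + \tfrac{1}{2h}\int_\Omega |\bu^k - \bu^{k-1}|^2 \le \TV_\Omega^\N[\bu^{k-1}]$ over $k$ produces the discrete analogue of \eqref{energyinequality}, which is stable under the limit $h\to 0$.

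The crucial new ingredient is the short-time Lipschitz bound, and this is where I expect the main obstacle to lie. The idea is to test the equation against a suitable elliptic-type quantity in $\nabla\bu$ to derive a differential inequality for $\|\nabla\bu(t,\cdot)\|_{L^p(\Omega)}$ (eventually sending $p\to\infty$), in the spirit of a Bochner-type / Moser-type estimate. Differentiating \eqref{flowA} in space and pairing with $|\nabla\bu|^{p-2}\nabla\bu$, the leading second-order term has a sign after integration by parts, but it generates boundary terms on $\partial\Omega$; here convexity of $\Omega$ is exactly what is needed to control (or discard) these terms, since the second fundamental form of $\partial\Omega$ enters with a favorable sign, as in Jost-type estimates for harmonic maps on convex domains. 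The curvature of $\N$ enters through the term $\A_\bu(\bu_{x^i},\bZ_i)$ and the commutator of spatial derivatives with the projection $\pi_\bu$; the hypothesis $K_\N < \infty$ guarantees the resulting lower-order contributions are bounded by a superlinear (but locally integrable in time) function of $\|\nabla\bu\|_{L^p}$, so that a Gronwall/ODE-comparison argument yields a finite blow-up time $T = T(\N,\|\nabla\bu_0\|_{L^\infty})$ below which the $L^p$-norms — and hence, letting $p\to\infty$, the $L^\infty$-norm — of $\nabla\bu$ stay bounded. One must carry this estimate out at the discrete level (with $\bu^k$, $\bZ^k$) uniformly in $h$, which is delicate because $\bZ^k$ is only $L^\infty$ with $L^2$ divergence; a standard device is to regularize the scheme (e.g. adding $\eps\int|\nabla\bw|^2$ or working with the $p$-Laplacian for $p>1$ and then letting $p\downarrow 1$) to legitimize the computations, and then remove the regularization.

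With the uniform bounds $\sup_k \TV_\Omega^\N[\bu^k] \le \TV_\Omega^\N[\bu_0]$, $\sum_k h\,\big|\tfrac{\bu^k-\bu^{k-1}}{h}\big|^2_{L^2} \le \TV_\Omega^\N[\bu_0]$ and $\sup_k \|\nabla\bu^k\|_{L^\infty(\Omega)} \le C(\N,\|\nabla\bu_0\|_{L^\infty})$ on the time interval $[0,T[$, I would define the piecewise-constant and piecewise-affine interpolants $\bar\bu_h$, $\hat\bu_h$ in time and extract (via Aubin--Lions, using the uniform $W^{1,\infty}$-bound in space and the $W^{1,2}$-bound in time) a subsequence converging to some $\bu \in W^{1,2}_{loc}([0,T[\times\overline\Omega,\N)$ with $\nabla\bu \in L^\infty_{loc}$; simultaneously $\bZ^k$ gives $\bZ \in L^\infty(]0,T[\times\Omega,\R^{m\cdot N})$ weakly-$*$, with $\div\bZ$ controlled in $L^2_{loc}$ via the equation. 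The remaining points are: (i) verifying the manifold constraint $\bu(t,\cdot)\in\N$ passes to the limit (immediate from uniform convergence and closedness of $\N$), (ii) verifying $\bu_t = \pi_\bu(\div\bZ)$ in the limit (strong convergence of $\bu$ and $\nabla\bu$ handles the projection and the curvature term, weak convergence handles $\div\bZ$), (iii) verifying the inclusion $\bZ \in \tfrac{\nabla\bu}{|\nabla\bu|}$, which is the usual subtlety — one shows $|\bZ|\le 1$ a.e.\ from the weak-$*$ limit and recovers $\bZ\cdot\nabla\bu = |\nabla\bu|$ from a $\liminf$ of energies combined with the equation (an Anzellotti/Temam pairing argument, or lower semicontinuity of $\int \bZ\cddot\nabla\bu - |\nabla\bu|$), and (iv) the Neumann condition $\norm^\Omega\cdot\bZ = \boldsymbol 0$, preserved because normal traces are continuous under the relevant convergence of $(\bZ^k,\div\bZ^k)$. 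Finally the energy inequality \eqref{energyinequality} follows from the discrete one by lower semicontinuity of $\TV_\Omega^\N$ and of the $L^2$-norm of the time derivative.
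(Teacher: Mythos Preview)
Your overall analytic strategy for the Lipschitz bound --- a Bochner-type identity, with the boundary contribution controlled by convexity of $\Omega$ and the curvature contribution by $K_\N<\infty$, leading to an ODE for $\|\nabla\bu\|_{L^p}$ and then $p\to\infty$ --- is exactly the mechanism the paper uses (Lemmata~\ref{bochnerlemma} and~\ref{bound}). However, the \emph{framework} in which this estimate is carried out is genuinely different. The paper does not time-discretize. Instead it regularizes the PDE itself, replacing $|\nabla\bu|$ by $\sqrt{\eps^2+|\nabla\bu|^2}$, and solves the resulting smooth quasilinear parabolic system (\ref{ctvflowapprox}--\ref{initapprox}) classically in $C^{\frac{3+\alpha}{2},3+\alpha}$ via the Acquistapace--Terreni theorem, after first unconstraining the problem by the totally-geodesic embedding trick of Lemma~\ref{totally}. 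On these classical solutions the Bochner computation is rigorous, the estimates of Lemmata~\ref{energylemma} and~\ref{bound} are uniform in $\eps$, and the passage $\eps\to 0^+$ is done by div-curl reasoning (Section~\ref{passage}). The proof is then completed in two further approximation steps: smooth initial data are relaxed to $W^{1,\infty}$ via Lemma~\ref{approxdatum}, and smooth convex domains to general convex domains via Lemma~\ref{convexapprox}.

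The gap in your proposal is precisely where you write ``at least after establishing enough regularity of $\bu^k$'' and later ``a standard device is to regularize the scheme''. This is the entire difficulty, and you have not said how to resolve it. The Bochner computation needs three spatial derivatives of $\bu$; your discrete minimizers $\bu^k\in BV(\Omega,\N)$ have none a priori, and even after adding $\eps\int|\nabla\bw|^2$ or a $p$-energy you would need a constrained elliptic regularity theory (for manifold-valued maps with Neumann condition) that you have not supplied --- the paper explicitly remarks that it cannot borrow the $p$-harmonic flow results of \cite{fr2} because nontrivial boundary conditions are not treated there. The paper's route avoids this entirely: the regularized \emph{parabolic} system fits directly into classical existence theory, yielding $C^{\frac{3+\alpha}{2},3+\alpha}$ solutions on which all computations are justified. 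A minor additional point: in your limit passage you invoke ``strong convergence of $\nabla\bu$'' to handle the curvature term, but no such strong convergence is available; the paper instead passes to the limit in the form $\bu_t=\pi_\bu(\div\bZ)$, where only strong convergence of $\bu$ (hence of $\pi_\bu$) and weak $L^2$ convergence of $\div\bZ$ are needed.
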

This theorem bears similarity to \cite[Theorem 3.4]{gigakashimayamazaki}, where
Lipschitz local-in-time solutions to \eqref{ctvflowf} are constructed in the
case where $\Omega$ is a flat torus, i.\,e.\;a box with periodic
boundary conditions. However, aside from the choice of boundary condition,
there are differences between these results --- most importantly, in
\cite{gigakashimayamazaki}, smallness of $\nabla \bu_0$ in $L^{1+\eps}(\Omega)$
is
assumed. This is because in \cite{gigakashimayamazaki}, global solutions to
$p$-harmonic flows constructed in \cite{fr2} for small intial data are used as
an approximation. In our case a different approximation scheme is proposed. In
fact we cannot use the results in \cite{fr2} as non-trivial boundary
conditions
are not handled there.

At least in the case of Dirichlet boundary data, regular solutions to \eqref{ctvflowf} can blow up in
finite time, as shown by explicit examples in \cite{DGM,giacmoll2010}. In
our case, we prove that
solutions exist globally in time,
provided that the
range of the initial datum is contained in a small enough ball in $\N$. In
fact, in this case they become constant in finite time, similarly as for
the scalar total variation flow \cite{gigakohn}. Note that in the case of
inhomogeneous Dirichlet boundary conditions, the evolution of generic initial
data under $1$-harmonic flow does not stop in finite time
\cite{gigakurodafinite}, in contrast to what is observed in the scalar
total variation flow, at least in $1$-dimensional domains
\cite{kielak}. Let us denote by $B_g(\bp, R)$ the ball centered at $\bp \in
\N$ of radius $R>0$
with respect to the metric induced by $g$ on $\N$.
\begin{thm}
\label{thmextinct}
Let $\bp_0 \in \N$, $\bu_0 \in
W^{1,\infty}(\Omega, \N)$ and $\bu$ be a regular solution to
(\ref{ctvflowf},
\ref{Neumannf}, \ref{init}) in $[0,T[$. Suppose that
$\bu_0(\Omega) \in \overline{B_g(\bp_0,R)}$, $R>0$. There exist
\begin{itemize}
 \item a constant
$R_*=R_*(\N, \bp_0) >0$ such that if $R<R_*$, then
$\bu(t,\Omega)
\in \overline{B_g(\bp_0, R)}$ for $t\in ]0,T[$,
 \item constants $\widetilde R_*= \widetilde R_*(\N, \bp_0) \in
]0,R_*[$, $C= C(\Omega, \N, \bp_0) >0$ and $\bu_* \in \N$ such that
if $R<\min\left(\widetilde R_*, \frac{T}{C}\right)$, then $\bu(t, \cdot) \equiv
\bu_*$ for $t \in ]C R,T[$.
\end{itemize}
\end{thm}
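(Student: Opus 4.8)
The plan is to construct two Lyapunov functionals from the squared Riemannian distance to a point and to exploit the geodesic convexity of that function on small balls, in the spirit of the scalar total variation flow \cite{gigakohn}. Let $r_c(\bp_0)>0$ be the convexity radius of $\N$ at $\bp_0$: on $\overline{B_g(\bq,r)}$ with $r<r_c(\bq)$ the function $\rho_\bq:=\tfrac12\dist_g(\cdot,\bq)^2$ is smooth with $\mathrm{Hess}^\N\rho_\bq$ positive definite, and on any compact such ball $\mathrm{Hess}^\N\rho_\bq\ge c_0\,g$ for some $c_0>0$. I would set $R_*:=r_c(\bp_0)$ and choose $\widetilde R_*\in\,]0,R_*[$ small enough that $\overline{B_g(\bp_0,2\widetilde R_*)}$ lies within the convexity radius of each of its points and carries the bound $\mathrm{Hess}^\N\rho_\bq\ge c_0\,g$ for all $\bq$ in it, with a single $c_0=c_0(\N,\bp_0)>0$. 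The computational heart is the identity
\[ \partial_t(f\circ\bu)\;=\;\partial_{x^i}\!\big(\nabla F(\bu)\cdot\bZ_i\big)\;-\;\chi_{\{\nabla\bu\neq\boldsymbol 0\}}\,\tfrac{1}{|\nabla\bu|}\,\mathrm{Hess}^\N f_{\bu}(\bu_{x^i},\bu_{x^i}), \]
valid $\Lb^{1+m}$-a.e.\ in $]0,T[\times\Omega$ for a regular solution $(\bu,\bZ)$, any smooth $f$ defined near the range of $\bu$, and $F$ a smooth extension of $f$ near $\N$ with $\nabla F$ tangent to $\N$ along $\N$ (e.g.\ $f$ composed with the nearest-point projection). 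It follows by writing $\partial_t(f\circ\bu)=\nabla F(\bu)\cdot\bu_t=\nabla F(\bu)\cdot\div\bZ$ (using \eqref{ctvflowZ} and tangency of $\nabla F(\bu)$), applying the Leibniz rule to the $L^\infty$-field $\bZ$ with $\div\bZ\in L^2_{loc}$, using $(\nabla^2F(\bu)\bu_{x^i})\cdot\bu_{x^i}=\mathrm{Hess}^\N f_\bu(\bu_{x^i},\bu_{x^i})$ (because $\A_\bu(\bu_{x^i},\bu_{x^i})\perp\nabla F(\bu)$), and substituting $\bZ_i=\bu_{x^i}/|\nabla\bu|$ on $\{\nabla\bu\neq\boldsymbol 0\}$. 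Integrating in $\bx$, the Gauss--Green formula for divergence-measure fields together with the Neumann condition \eqref{NeumannZ} annihilates the first term on the right (the continuous multiplier $\nabla F(\bu)$ commutes with the normal trace of $\bZ$).

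\emph{First item.} Take $R<R_*$ and let $[0,T'[$ be the maximal interval on which $\bu(t,\Omega)\subseteq\overline{B_g(\bp_0,r_c(\bp_0))}$; continuity of $\bu$ gives $T'>0$. On $[0,T'[$ I would apply the identity with $f=\rho_{\bp_0}$ (convex there) to $\rho_{\bp_0}(\bu)^p$, $p\ge1$. Integrating in $\bx$ and integrating by parts twice (boundary terms vanishing by \eqref{NeumannZ}) produces $\frac{d}{dt}\int_\Omega\rho_{\bp_0}(\bu)^p$ as minus the sum of two nonnegative bulk integrals --- one carrying $\mathrm{Hess}^\N\rho_{\bp_0}\ge0$, the other of the form $(p-1)\rho_{\bp_0}(\bu)^{p-2}|\nabla\bu|^{-1}\big(\nabla F(\bu)\cdot\bu_{x^i}\big)^2\ge0$ --- hence $\le0$. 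Therefore $\|\rho_{\bp_0}(\bu(t))\|_{L^p(\Omega)}\le\|\rho_{\bp_0}(\bu_0)\|_{L^p(\Omega)}\le\tfrac12R^2|\Omega|^{1/p}$, and $p\to\infty$ yields $\bu(t,\Omega)\subseteq\overline{B_g(\bp_0,R)}$ on $[0,T'[$. Since this is a closed constraint lying strictly inside $B_g(\bp_0,r_c(\bp_0))$, a standard continuation argument forces $T'=T$.

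\emph{Second item.} Fix $R<\widetilde R_*$, so that $\bu(t,\Omega)\subseteq\overline{B_g(\bp_0,R)}$ for all $t$ and $\mathrm{Hess}^\N\rho_\bq\ge c_0\,g$ there for every $\bq\in\overline{B_g(\bp_0,R)}$. I would follow the dispersion
\[ V(t)\;=\;\tfrac12\int_\Omega\!\int_\Omega\dist_g\!\big(\bu(t,\bx),\bu(t,\by)\big)^2\dd\bx\dd\by, \]
which vanishes precisely when $\bu(t,\cdot)$ is constant. Applying the identity (integrated in $\bx$) with $f=\rho_\bq$, $\bq=\bu(t,\by)$, and then integrating in $\by$ and using symmetry gives $\dot V\le-2c_0|\Omega|\,\TV_\Omega^\N[\bu]$; in particular $V$ is non-increasing. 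On the other hand, composing with a normal-coordinate chart $\psi$ at $\bp_0$ --- bi-Lipschitz onto its image with constants $L_1,L_2$ depending only on $\N,\bp_0$ once $\widetilde R_*$ is fixed --- bounds $\TV_\Omega^\N[\bu]$ below by $L_1^{-1}$ times the scalar total variation of $\widetilde\bu:=\psi\circ\bu\in W^{1,\infty}(\Omega;\R^n)$, and relates $V(t)$ to $\|\widetilde\bu(t)-\overline{\widetilde\bu(t)}\|_{L^2(\Omega)}^2$ up to the factors $L_1,L_2$. Feeding the Poincar\'e--Sobolev inequality $\|\widetilde\bu-\overline{\widetilde\bu}\|_{L^{m/(m-1)}}\le c_\Omega\,\TV_\Omega[\widetilde\bu]$ into the interpolation $\|\widetilde\bu-\overline{\widetilde\bu}\|_{L^2}\le\|\widetilde\bu-\overline{\widetilde\bu}\|_{L^{m/(m-1)}}^{\lambda}\|\widetilde\bu-\overline{\widetilde\bu}\|_{L^\infty}^{1-\lambda}$, $\lambda=\tfrac{m}{2(m-1)}$ (a direct $L^2$-Poincar\'e suffices when $m\le2$), together with $\|\widetilde\bu-\overline{\widetilde\bu}\|_{L^\infty}\le 2L_1R$, yields
\[ \TV_\Omega^\N[\bu(t)]\;\ge\;C_2(\Omega,\N,\bp_0)\,R^{-(m-2)/m}\,V(t)^{(m-1)/m}. \]
Hence $\dot V\le-C_3\,R^{-(m-2)/m}\,V^{(m-1)/m}$; since $(m-1)/m<1$, $V^{1/m}$ decays at the constant rate $\tfrac{C_3}{m}R^{-(m-2)/m}$ until it reaches $0$, so $V(t)=0$ for $t\ge\tfrac{m}{C_3}R^{(m-2)/m}V(0)^{1/m}$, and $V$ then stays $0$. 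As $V(0)\le 2|\Omega|^2R^2$ by the first item, this time is $\le CR$ for an appropriate $C=C(\Omega,\N,\bp_0)$. Consequently, if in addition $R<T/C$, then $\bu(t,\cdot)$ is spatially constant for $t\in\,]CR,T[$; testing \eqref{ctvflowZ} with $\bu_t$ (as in the derivation of \eqref{energyinequality}) gives $\int_s^t\!\int_\Omega\bu_\tau^2\le\TV_\Omega^\N[\bu(s)]-\TV_\Omega^\N[\bu(t)]=0$ for $CR\le s\le t<T$, so $\bu_t\equiv\boldsymbol0$ on $]CR,T[\times\Omega$ and $\bu\equiv\bu_*$ there for a single $\bu_*\in\N$.

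\emph{Main obstacle.} The part I expect to require the most care is making the displayed identity for $\partial_t(f\circ\bu)$ rigorous at the available regularity --- justifying the Leibniz rule for $\bZ$, the Gauss--Green formula and the vanishing of the boundary term within the Anzellotti normal-trace calculus, and dealing with the degeneracy set $\{\nabla\bu=\boldsymbol0\}$ --- though these are of the same nature as manipulations already made in the paper (e.g.\ the passage from \eqref{ctvflowZ} to \eqref{flowA}), so I would import them. The genuinely new difficulty is that the terminal constant $\bu_*$ is unknown a priori: one cannot track $\int_\Omega\dist_g(\bu,\bp_0)^2$, which need not tend to $0$, and must use the doubled quantity $V$ instead; producing the Poincar\'e--Sobolev bound with exactly the power of $R$ that makes the extinction time scale linearly in $R$ --- matching the hypothesis $R<T/C$ --- is the other delicate bookkeeping step.
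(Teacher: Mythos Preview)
Your argument is correct and reaches the same conclusion as the paper, but along a genuinely different route in both parts.

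For the \emph{first item}, the paper works in geodesic polar coordinates about $\bp_0$, writes the equation for the radial component $u^r$ \eqref{ctvflowcoord}, and uses the Hessian comparison \eqref{calcpolar2} to show $\tfrac{\dd}{\dd t}\int_\Omega (u^r-R)_+^2\le 0$. You instead appeal to geodesic convexity of $\rho_{\bp_0}$, prove monotonicity of $\|\rho_{\bp_0}(\bu)\|_{L^p}$ via your chain-rule identity, and send $p\to\infty$. Both are standard barrier/maximum-principle devices; your version avoids coordinates but uses essentially the same Hessian bound.

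For the \emph{second item}, the paper tracks $f_{\mu(t)}(\bp_c(t))=\tfrac12\int_\Omega\dist_g(\bu(t,\cdot),\bp_c(t))^2$ where $\bp_c(t)$ is Karcher's Riemannian center of mass of the push-forward $\bu(t,\cdot)_\#\Lb^m$; the criticality condition $\int_\Omega\exp_{\bp_c}^{-1}\bu=0$ both kills the $\dot\bp_c$-term in $\dot f_\mu$ and supplies the zero-mean hypothesis needed for Sobolev--Poincar\'e in normal coordinates at $\bp_c(t)$. Your doubled functional $V$ sidesteps the center of mass entirely: the symmetry in $(\bx,\by)$ replaces criticality, and subtraction of the Euclidean mean of $\psi\circ\bu$ (with $\psi$ a \emph{fixed} chart at $\bp_0$) replaces the moving chart. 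This is arguably cleaner --- you avoid the implicit regularity of $t\mapsto\bp_c(t)$ that the paper leaves unaddressed --- at the price of carrying bi-Lipschitz constants for $\psi$ through the inequalities. The resulting differential inequality and the $O(R)$ extinction time are the same.

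One small point: to conclude that the spatially constant state $\bu(t,\cdot)\equiv\bc(t)$ is also time-independent, invoking the energy inequality is unnecessary (and its validity for arbitrary regular solutions is not established in the paper). It is simpler to integrate \eqref{ctvflowZ} over $\Omega$: since $\bc(t)\in\N$ is independent of $\bx$, $|\Omega|\,\bc'(t)=\pi_{\bc(t)}\int_\Omega\div\bZ=\pi_{\bc(t)}\int_{\partial\Omega}\norm^\Omega\cdot\bZ=0$ by \eqref{NeumannZ}.
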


In the particular case $K_\N \leq 0$ no blow-up
occurs for any Lipschitz datum, and we can obtain
a stronger result of global existence.
\begin{thm}
\label{thmnonpositive}
 Suppose that $\Omega$ is convex and $K_\N \leq
0$. Let $\bu_0 \in
W^{1,\infty}(\Omega, \N)$. There
exists a regular
solution $\bu$ to
(\ref{ctvflowf}, \ref{Neumannf}, \ref{init}) in $[0,\infty[$ satisfying 
the energy
inequality
(\ref{energyinequality}). There exists $T_*=T_*(\bu_0)\in [0, \infty[$ and
$\bu_* = \bu_*(\bu_0)\in \N$ such that $\bu(t,\cdot) \equiv \bu_*$ for $t \geq
T_*$. Furthermore,
\[ \ess \sup_{t >0}\|\nabla \bu(t, \cdot)\|_{L^\infty(\Omega)} \leq \|\nabla
\bu_0\|_{L^\infty(\Omega)} .\]
\end{thm}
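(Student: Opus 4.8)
The plan is to bootstrap from Theorem~\ref{thmexistence}: since $K_\N \le 0 < \infty$ and $\Omega$ is convex, a regular solution exists on some maximal interval $[0, T_{\max}[$, and the task is to show $T_{\max} = \infty$ and to control the asymptotics. The decisive new ingredient compared to Theorem~\ref{thmexistence} is a pointwise (in time) gradient bound that does \emph{not} deteriorate as $t \to T_{\max}$; this is where non-positive curvature enters. I would derive a differential inequality of Bochner type for a suitable quantity built from $\nabla \bu$ — morally $\tfrac{1}{2}\partial_t |\nabla \bu|^2 \le \div(\cdots) + (\text{curvature term})$, where the curvature term is $\le 0$ when $K_\N \le 0$ — using the form \eqref{flowA} of the equation and the Gauss equation relating $\mathcal R^\N$ to the second fundamental form $\A_\bu$. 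Testing this against a nonnegative test function and using the Neumann condition \eqref{NeumannZ} (convexity of $\Omega$ kills the boundary term with the right sign, as is standard for harmonic-type flows on convex domains) yields that $\|\nabla \bu(t,\cdot)\|_{L^\infty(\Omega)}$ is non-increasing in $t$, giving the final displayed inequality of the theorem.

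With this uniform Lipschitz bound in hand, I would argue that the local existence time $T = T(\N, \|\nabla \bu_0\|_{L^\infty})$ in Theorem~\ref{thmexistence} depends on $\bu_0$ only through $\|\nabla\bu_0\|_{L^\infty}$, hence can be taken uniform along the flow. A standard continuation argument then extends the solution: if $T_{\max} < \infty$, restart from time $T_{\max} - T/2$ (using that $\bu(T_{\max}-T/2, \cdot) \in W^{1,\infty}(\Omega,\N)$ with gradient norm $\le \|\nabla\bu_0\|_{L^\infty}$) to obtain a solution past $T_{\max}$, contradicting maximality; one must check that the two solution pieces glue into a single regular solution in the sense of Definition~\ref{defsolution}, which follows from uniqueness (Theorem~\ref{uniqueness}) applied on the overlap and from the fact that the defining conditions are local in time. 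The energy inequality \eqref{energyinequality} on each finite interval $[0,T']$ passes to the limit $T' \to \infty$ since its right-hand side is the fixed constant $\int_\Omega |\nabla \bu_0|$; in particular $\int_0^\infty \int_\Omega \bu_t^2 < \infty$ and $t \mapsto \TV_\Omega^\N[\bu(t,\cdot)]$ is non-increasing and bounded below.

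For the finite-time extinction, I would show $\TV_\Omega^\N[\bu(t,\cdot)]$ reaches $0$ in finite time, which forces $\bu(t,\cdot)$ to be constant. The mechanism is the same as for the scalar total variation flow \cite{gigakohn}: using a Poincar\'e-type inequality to bound $\|\bu(t,\cdot) - \bar\bu(t)\|_{L^2}$ by $C\,\TV_\Omega^\N[\bu(t,\cdot)]$ (where $\bar\bu(t)$ is a suitable mean or, better, working with $\|\nabla\bu\|_{L^1}$ directly and an isoperimetric-type estimate valid for manifold-valued maps with small image), together with the energy dissipation identity $\frac{d}{dt}\TV_\Omega^\N[\bu] = -\int_\Omega \bu_t^2$ and a Cauchy--Schwarz step, one gets $\frac{d}{dt}\TV_\Omega^\N[\bu] \le -c\,(\TV_\Omega^\N[\bu])$ or a comparable inequality of the form $\frac{d}{dt}\Phi \le -c\sqrt{\Phi}$, whence $\Phi(t) = 0$ for $t \ge T_*$ with $T_*$ controlled by $\TV_\Omega^\N[\bu_0]$. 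Once the total variation vanishes, $\bu(t,\cdot)$ is a.e.\ equal to a constant in $\N$; continuity in time and the finiteness of the dissipated energy identify a single limiting value $\bu_* = \bu_*(\bu_0) \in \N$, and uniqueness (Theorem~\ref{uniqueness}) shows $\bu(t,\cdot) \equiv \bu_*$ for all $t \ge T_*$.

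The main obstacle I anticipate is the rigorous derivation of the non-increasing $L^\infty$ gradient bound at the low regularity available: $\nabla\bu$ is only $L^\infty$ in space and $W^{1,2}$ in time, and $\bZ$ is merely an $L^\infty$ vector field with $L^2$ divergence, so the Bochner computation and the associated integration by parts (including the normal-trace term on $\partial\Omega$) cannot be done naively. I would handle this by working on an approximating sequence — either the one used to prove Theorem~\ref{thmexistence} (e.g.\ a $p \to 1$ or time-discretization or viscous regularization scheme), for which the Bochner inequality and the favorable sign of the boundary term on convex $\Omega$ are classical for $p$-harmonic-type flows \cite{hunger1,fr1,fr2} — and then passing to the limit, using that the curvature sign condition is stable under approximation and that the $L^\infty$ bound is lower semicontinuous under the relevant (weak-$*$) convergence. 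Establishing that the limit object is genuinely a regular solution in the sense of Definition~\ref{defsolution}, with the extra uniform bound, is the technical heart; the continuation, energy inequality and extinction arguments are then comparatively routine.
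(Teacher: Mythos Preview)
Your approach to global existence and the uniform Lipschitz bound is essentially the paper's: derive the Bochner identity for the smooth approximating flow, use convexity of $\Omega$ to give the boundary term the right sign, and use $K_\N\le 0$ to kill the curvature term, concluding that $\|\nabla\bu^\eps(t,\cdot)\|_{L^\infty}$ is non-increasing; then pass to the limit. The paper simply records that this makes the blow-up time $T_\dagger$ from Lemma~\ref{bound} equal $+\infty$, rather than running a restart/continuation argument, but the two are equivalent. One small point: your bootstrap from Theorem~\ref{thmexistence} tacitly imports the hypothesis that the embedding $\N\hookrightarrow\R^N$ is closed, which Theorem~\ref{thmnonpositive} does not assume; the paper avoids this by writing the approximating system intrinsically in global normal coordinates on the universal cover $\widetilde\N$ (available by Cartan--Hadamard).

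The substantive gap is in the extinction step. Your Poincar\'e argument hinges on a ``suitable mean'' $\bar\bu(t)$, but for a non-simply-connected target of non-positive curvature (e.g.\ $\N=\mathbb S^1$ or $\mathbb S^1\times\R^{n-1}$) there is in general no well-defined Riemannian center of mass once the image of $\bu(t,\cdot)$ covers more than half of a closed geodesic, and the extrinsic Euclidean mean need not lie in $\N$. For the same reason Theorem~\ref{thmextinct} cannot be invoked directly on $\N$: the radius $R_*(\bp_0)$ in \eqref{Rstar} is bounded by a quarter of the shortest closed geodesic through $\bp_0$, which may be far smaller than the diameter of $\bu_0(\Omega)$. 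The paper's remedy is exactly to lift the regular solution to $\widetilde\bu\colon\Omega\to\widetilde\N$; on the simply connected cover one has $R_*\equiv+\infty$, so the center-of-mass/polar-coordinate estimates of Section~\ref{sec-extinct} apply without any smallness restriction and yield $\tfrac{d}{dt}f_\mu\le -C f_\mu^{1-1/m}$, hence finite-time extinction of $\widetilde\bu$ and therefore of $\bu=\gamma\circ\widetilde\bu$. Without the lift your outline does not close; note also that the first inequality you propose, $\tfrac{d}{dt}\TV\le -c\,\TV$, would give only exponential decay, and the alternative $\tfrac{d}{dt}\Phi\le -c\sqrt\Phi$ is asserted but not derived.
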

We remark that in the scalar case the preservation of the
$W^{1,\infty}$ bound follows from \cite[Corollary 5.6]{CaChNoreg}. However,
the methods there are not readily adaptable to vectorial problems.

In the present paper we
consider regular $1$-harmonic flows which are continuous, and hence capable of
generating homotopy. For this reason we find it appropriate to discuss in
detail
the case where the domain is a compact Riemannian manifold $(\M,
\gamma)$. In this
setting, the total variation functional takes form
\begin{equation}
 \TV_\M^\N[\bu] = \int_\M |\dd \bu|_\gamma \dd \mu_\gamma,
\end{equation}
where, in local coordinates $\bx \mapsto (x^1, \ldots, x^m)$ on $\M$, $|\dd
\bu|_\gamma = (\gamma^{\alpha \beta} u^i_{x^\alpha}
u^i_{x^\beta})^{\frac{1}{2}}$, we can express $\dd \mu_\gamma = |\det \,
(\gamma_{\alpha \beta})|^\frac{1}{2} \dd \Lb^m$ and \eqref{ctvflowf} takes
the form
\begin{equation}
 \label{ctvflowMf}
 \bu_t = \pi_\bu \left(\right. \div\!_\gamma \tfrac{\dd \bu}{|\dd
\bu|} \left.\right) \quad
\text{in
}]0,T[\times \M.
\end{equation}
The expression for $\div\!_\gamma$ acting on a $1$-form $\vartheta$ on $\M$ in
coordinates is
\begin{equation}
\div\!_\gamma \vartheta= |\det \,
(\gamma_{\alpha \beta})|^{-\frac{1}{2}} \left(|\det \,
(\gamma_{\alpha \beta})|^\frac{1}{2} \gamma^{\alpha \beta}
\vartheta_\beta\right)_{x^\alpha}.
\end{equation}

Observe that \eqref{ctvflowMf} is a formal limit as $p \to 1^+$ of systems
\begin{equation}
 \label{cpflowMf}
 \bu_t = \pi_\bu (\div\!_\gamma (|\dd \bu|^{p-2} \dd \bu))\quad
\text{in
}]0,T[\times \M
\end{equation}
corresponding to $p$-harmonic map flows between Riemannian manifolds. These
were first considered in the case $p = 2$ in connection with the homotopy
problem for harmonic maps, i.\,e.\;the problem of finding a harmonic map
homotopic to a given one. The problem was solved
in \cite{eellssampson} under the condition that $K_\N \leq 0$ by
constructing the harmonic
map flow. Analogous result was later obtained in \cite{fr1} for any $p>1$. We
note that there are several non-equivalent notions
of $p$-harmonic maps, among them \emph{weakly $p$-harmonic maps},
i.\,e.\;stationary weak solutions to \eqref{cpflowMf}.

We introduce the
notation
\begin{equation}
 \tfrac{\phantom{|}\dd \bu\phantom{|_\gamma}}{|\dd \bu|_\gamma} \colon (t,\bx)
\mapsto \left\{\begin{array}{ll}

\frac{\phantom{|}\dd
\bu(t,\bx)\phantom{|_\gamma}}{|\dd \bu(t,\bx)|_\gamma} & \text{ if }\dd
\bu(t,\bx)
\neq \boldsymbol 0 \\
B_\gamma(0,1) \subset T^*_{\bx} \M \times T_{\bu(t,\bx)} \N & \text{ if } \dd
\bu(t,\bx) =
\boldsymbol 0.
\end{array}\right.
\end{equation}
Measurable selections of $\frac{\phantom{|}\dd
\bu\phantom{|_\gamma}}{|\dd \bu|_\gamma}(t, \cdot)$ can be seen as $L^\infty$
sections of the bundle $T^*\M \times \R^N$ over $\M$ for a.\,e.\;$t \in ]0,T[$,
see \cite{palaisfoundations} for reference.
As in \cite{palaisfoundations}, we let
$L^p(T^*\M \times \R^N)$ denote $L^p$ sections of this bundle, $p \in [1,
\infty]$. Similarly, we denote by $L^p(]0,T[\times T^*\M \times \R^N)$ the
space of $L^p$ sections of the bundle $]0,T[ \times T^*\M \times \R^N$ over
$]0,T[ \times \M$, and by $L^p_{loc}([0,T[\times T^*\M \times \R^N)$ the space
of measurable sections of
this bundle which are
$p$-integrable
locally on $[0,T[\times\M$. We are ready to introduce a concept of
solution to \eqref{ctvflowMf}.
\begin{defn}
\label{defsolutionM}
 Let $T \in ]0, \infty]$. We say that
 \[\bu \in W^{1,2}_{loc}([0, T[\times \M,
\N)\text{ with } \dd \bu \in L^\infty_{loc}([0,T[ \times T^* \M\times
\R^N)\]  is a (regular)
solution to
(\ref{ctvflowMf}) (in $[0,T[$) if there exists $\bZ \in
L^\infty(]0,T[\times T^*\M \times \R^N)$ with $\div\!_\gamma \bZ \in
L^2_{loc}([0,T[\times \M,
\R^N)$
satisfying
\begin{equation}
\label{inclZM}
 \bZ \in \tfrac{\phantom{|}\dd \bu\phantom{|_\gamma}}{|\dd \bu|_\gamma},
\end{equation}
\begin{equation}
\label{ctvflowZM}
 \bu_t = \pi_\bu (\div\!_\gamma \bZ)
\end{equation}
$\Lb^{1+m}-\text{a.\,e. in
}]0,T[\times \M$.
\end{defn}
The
strength of our result in this case depends on the sign of Ricci curvature
$\Ric^\M$ of $\M$. We denote
\begin{equation*}
Ric_\M = \min_{\bp \in \M} \min_{\bv, \bw \in T_p \M\setminus\{\boldsymbol 0\}}
\frac{\Ric^\M_\bp(\bv, \bw)}{|\bv|_\gamma |\bw|_\gamma}.
\end{equation*}

\begin{thm}\label{homotopy}
 Let $(\M, \gamma)$ be a compact, orientable Riemannian manifold and let $(\N,
g)$ be a compact submanifold in the Euclidean space $\R^N$. Given $\bu_0 \in
W^{1,\infty}(\M,\N)$, there exists $T \in ]0,\infty]$ and a unique regular
solution to (\ref{ctvflowMf}, \ref{init}) in $[0,T[$.

If $K_\N \leq 0$, the
solution exists in $[0,\infty[$. If in addition $Ric_\M \geq 0$, there exists
a sequence $(t_k) \subset ]0,\infty[$, $t_k \to \infty$, $\bu_* \in
W^{1,\infty}(\M, \N)$ and $\bZ_* \in L^\infty(T^* \M
\times \R^N)$ with $\div\!_\gamma \bZ_* \in L^\infty(\M,
\R^N)$ such that
\begin{equation}
\label{1harmonic}
 \pi_{\bu_*} (\div\!_\gamma \bZ_*) = \boldsymbol 0, \qquad \bZ_* \in
\tfrac{\phantom{|}\dd
\bu_*\phantom{|_\gamma}}{|\dd \bu_*|_\gamma},
\end{equation}
\begin{equation}
 \bu(t_k, \cdot) \to \bu_* \text{ in } C(\M,\N).
\end{equation}
\end{thm}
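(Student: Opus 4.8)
The plan is to reduce everything to the Euclidean results already proved (Theorems \ref{uniqueness}, \ref{thmexistence}, \ref{thmnonpositive}) by working in local coordinate charts, and then to extract a limit using the energy inequality together with a Bochner-type estimate that controls $\|\dd\bu(t,\cdot)\|_{L^\infty}$ uniformly in time when $\mathrm{Ric}_\M\ge 0$ and $K_\N\le 0$.

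\emph{Local existence and uniqueness.} First I would cover $\M$ by finitely many coordinate charts in which $\gamma$ and $\gamma^{-1}$ are comparable to the Euclidean metric, with all derivatives bounded (using compactness of $\M$). In each chart, \eqref{ctvflowMf} is a system of the same structural type as \eqref{ctvflowf} but with variable, uniformly elliptic and uniformly Lipschitz coefficients $|\det\gamma|^{1/2}\gamma^{\alpha\beta}$; the only essential change in the a priori estimates leading to Theorem \ref{thmexistence} is the appearance of lower-order terms with bounded coefficients and of curvature terms of $\M$, which are harmless because $\M$ is compact. Since $\M$ has no boundary, the role played by convexity of $\Omega$ (namely, controlling boundary integrals in the $L^p$ estimates for $\nabla\bu$ via a sign condition on the second fundamental form of $\partial\Omega$) is vacuous: the corresponding boundary terms simply do not appear. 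Hence the approximation scheme of Theorem \ref{thmexistence} produces, for $\bu_0\in W^{1,\infty}(\M,\N)$, a time $T=T(\M,\N,\|\dd\bu_0\|_{L^\infty})>0$ and a regular solution on $[0,T[$ satisfying the energy inequality $\mathrm{ess\,sup}_t\,\TV^\N_\M[\bu(t)]+\int_0^T\!\!\int_\M\bu_t^2\le\TV^\N_\M[\bu_0]$. Uniqueness follows by running the proof of Theorem \ref{uniqueness} chart by chart: that argument is local in space (it tests the difference of two solutions against itself and uses only the structure of the right-hand side, not any strict monotonicity), so partitioning $\M$ and summing over a partition of unity yields $\bu_1\equiv\bu_2$ on the common interval of existence; then one takes $T$ maximal.

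\emph{Global existence when $K_\N\le 0$.} Here I would mimic Theorem \ref{thmnonpositive}. The key point is the differential inequality for $|\dd\bu|^2$ obtained from the Bochner formula (cf.\ Lemma \ref{bochnerlemma} referenced in the excerpt): in the Riemannian setting this reads, schematically, $(\partial_t-\mathcal L)\tfrac12|\dd\bu|_\gamma^2 \le -\,(\text{nonneg. gradient term}) - \mathrm{Ric}^\M(\dd\bu,\dd\bu) + \langle\mathcal R^\N(\dd\bu,\dd\bu)\dd\bu,\dd\bu\rangle$, where $\mathcal L$ is the linearized (degenerate) operator $\div_\gamma(|\dd\bu|^{-1}(\cdot))$ acting suitably. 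When $K_\N\le 0$ the curvature term of $\N$ is $\le 0$; the Ricci term of $\M$ has an unfavourable sign only where $\mathrm{Ric}^\M<0$, but since $\M$ is compact $\mathrm{Ric}_\M>-\infty$, so one still gets $(\partial_t-\mathcal L)|\dd\bu|^2\le C|\dd\bu|^2$ with $C=C(\M)$. A maximum-principle argument (made rigorous through the vanishing-viscosity / $p\to1$ approximation, exactly as for Theorem \ref{thmnonpositive}) then gives $\|\dd\bu(t,\cdot)\|_{L^\infty}\le e^{Ct}\|\dd\bu_0\|_{L^\infty}$, which is finite for every finite $t$. This uniform-in-time-on-compacts Lipschitz bound, together with uniqueness, lets me continue the solution past any finite time, so $T=\infty$. (If one wants the cleaner bound $\|\dd\bu(t)\|_{L^\infty}\le\|\dd\bu_0\|_{L^\infty}$ as in Theorem \ref{thmnonpositive} one additionally needs $\mathrm{Ric}_\M\ge0$, which is precisely the extra hypothesis in the asymptotic part.)

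\emph{Asymptotics: subconvergence to a $1$-harmonic map when $\mathrm{Ric}_\M\ge 0$.} From the energy inequality, $\int_0^\infty\!\!\int_\M\bu_t^2<\infty$, so there is a sequence $t_k\to\infty$ with $\int_\M\bu_t(t_k,\cdot)^2\to 0$. Under $\mathrm{Ric}_\M\ge0$ and $K_\N\le0$ the Bochner inequality gives $(\partial_t-\mathcal L)|\dd\bu|^2\le0$, hence the nonincreasing bound $\|\dd\bu(t,\cdot)\|_{L^\infty}\le\|\dd\bu_0\|_{L^\infty}$ for all $t$, and also $\|\bZ(t,\cdot)\|_{L^\infty}\le1$ by \eqref{inclZM}. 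Thus $(\bu(t_k,\cdot))$ is bounded in $W^{1,\infty}(\M,\N)$ and by Arzelà–Ascoli (compactness of $\M$ and of $\N$) a subsequence converges in $C(\M,\N)$ to some $\bu_*\in W^{1,\infty}(\M,\N)$, while $\dd\bu(t_k,\cdot)\rightharpoonup^* \dd\bu_*$ and $\bZ(t_k,\cdot)\rightharpoonup^*\bZ_*$ in the relevant $L^\infty$ spaces, with $\|\bZ_*\|_{L^\infty}\le1$. It remains to pass to the limit in the equation. Rewriting \eqref{ctvflowZM} in the form \eqref{flowA} (second–fundamental–form version), the left side $\bu_t(t_k,\cdot)\to0$ in $L^2(\M)$; for the right side one tests against smooth sections and uses the weak-$*$ convergence of $\bZ(t_k,\cdot)$ and the uniform convergence of $\bu(t_k,\cdot)$ (which handles the nonlinear coefficients $\pi_{\bu}$, $\A_{\bu}$) to obtain $\pi_{\bu_*}(\div_\gamma\bZ_*)=\boldsymbol 0$ in the sense of distributions, with $\div_\gamma\bZ_*\in L^2(\M)$; an elliptic-regularity / difference-quotient bootstrap on this stationary equation (again chart by chart) upgrades $\div_\gamma\bZ_*$ to $L^\infty(\M,\R^N)$. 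The only genuinely delicate point is verifying the inclusion $\bZ_*\in\frac{\dd\bu_*}{|\dd\bu_*|_\gamma}$ in the limit: weak-$*$ limits need not preserve the constraint $\bZ\cdot\dd\bu=|\dd\bu|_\gamma$ pointwise. This I would recover by a lower-semicontinuity / energy argument: the pairing $\langle\bZ(t_k,\cdot),\dd\bu(t_k,\cdot)\rangle$ integrates to $\TV^\N_\M[\bu(t_k)]$, which (being nonincreasing and bounded below) converges; combined with weak-$*$ convergence and the standard Reshetnyak / lower-semicontinuity inequality $\TV^\N_\M[\bu_*]\le\liminf\TV^\N_\M[\bu(t_k)]$ and the pointwise bound $\langle\bZ_*,\dd\bu_*\rangle\le|\dd\bu_*|_\gamma$, one forces equality $\langle\bZ_*,\dd\bu_*\rangle=|\dd\bu_*|_\gamma$ a.e., i.e.\ $\bZ_*\in\frac{\dd\bu_*}{|\dd\bu_*|_\gamma}$. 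This identification of the limiting vector field, rather than any of the compactness or regularity steps, is where I expect the main work to lie.
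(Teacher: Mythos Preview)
Your overall strategy matches the paper's: the Bochner identity picks up an extra Ricci term when the domain is $(\M,\gamma)$, yielding the differential inequality for $\|\dd\bu\|_{L^\infty}$; this gives local existence in general, global existence when $K_\N\le 0$, and the uniform bound $\|\dd\bu(t,\cdot)\|_{L^\infty}\le\|\dd\bu_0\|_{L^\infty}$ once also $Ric_\M\ge 0$. The paper runs the approximation and the Bochner estimate directly on $\M$ (no boundary term by Stokes, which is why orientability is assumed) rather than chart-by-chart, but this is a matter of presentation.

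There is, however, a genuine gap in your identification $\bZ_*\in\tfrac{\dd\bu_*}{|\dd\bu_*|_\gamma}$. From monotonicity you get $\TV^\N_\M[\bu(t_k)]\to L$, and lower semicontinuity gives $\TV^\N_\M[\bu_*]\le L$; combined with the pointwise bound $\langle\bZ_*,\dd\bu_*\rangle\le|\dd\bu_*|_\gamma$ this yields only $\int_\M\langle\bZ_*,\dd\bu_*\rangle\le\TV^\N_\M[\bu_*]\le L$, and nothing forces any of these to be equal. What is missing is $\int_\M\langle\bZ(t_k),\dd\bu(t_k)\rangle\to\int_\M\langle\bZ_*,\dd\bu_*\rangle$, and separate weak-$*$ convergence of the two factors does \emph{not} provide this. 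The paper supplies it via a div-curl (compensated compactness) argument: the equation in the form \eqref{flowA} together with the uniform Lipschitz bound shows $\div\!_\gamma\bZ(t_k,\cdot)$ is bounded in $L^2(\M)$, while $\dd\bu(t_k,\cdot)$ is curl-free, so the pairing passes to the limit in distributions; then the chain $|\dd\bu_*|_\gamma\le\liminf|\dd\bu(t_k)|_\gamma=\gamma^{\alpha\beta}\bu_{*,x^\alpha}\!\cdot\bZ_{*,\beta}\le|\dd\bu_*|_\gamma$ gives the pointwise equality. Your ``energy / Reshetnyak'' route cannot close without this compensated-compactness step.

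A second, smaller correction: the upgrade $\div\!_\gamma\bZ_*\in L^\infty(\M,\R^N)$ does not come from any elliptic bootstrap---the stationary $1$-harmonic system is far too degenerate for that. The reason is purely algebraic: once $\pi_{\bu_*}(\div\!_\gamma\bZ_*)=\boldsymbol 0$, the computation \eqref{pitoA} (applied to $\bu_*$ and $\bZ_*$, using that $\bZ_*$ is tangent to $\N$ along $\bu_*$) gives $\div\!_\gamma\bZ_*=-\gamma^{\alpha\beta}\A_{\bu_*}(\bu_{*,x^\alpha},\bZ_{*,\beta})$, and the right-hand side is in $L^\infty$ because $\bu_*\in W^{1,\infty}$ and $|\bZ_*|_\gamma\le 1$.
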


As $\bu$ is continuous and the sequence $(\bu(t_k, \cdot))$ converges to
$\bu_*$ in $C(\M, \N)$, $\bu_*$ and $\bu_0$ are homotopic. Thus, we have solved
the homotopy problem for (weakly) $1$-harmonic maps assuming that $\M$ is
orientable with $Ric_\M \geq 0$ and $K_\N \leq 0$.

\bigskip
The plan of the paper is the following one: Firstly, in section
\ref{sec-unique}, we prove Theorem \ref{uniqueness}. In section \ref{sec-app},
we obtain well-posedness of an approximating system to (\ref{ctvflowf},
\ref{Neumannf}, \ref{init}) and we obtain some a priori estimates (independent
of the parameter of approximation) for their solutions. This permits us to
prove Theorem \ref{thmexistence}, to which section \ref{sec-exist} is devoted.
The asymptotic behaviour is treated in the next sections: in section
\ref{sec-extinct}, we prove Theorem \ref{thmextinct} while in section
\ref{sec-cartan}, we treat the case of nonnegative curvature; i.e Theorem
\ref{thmnonpositive}. Section \ref{sec-domain} is devoted to the case
where the domain is a compact Riemannian manifold, in which we prove Theorem
\ref{homotopy}. The last part of the paper is an appendix where some
technical lemmata are stated and proven.

\section{Uniqueness}\label{sec-unique}
In this section, we give the proof of Theorem \ref{uniqueness}.

\medskip

Let $(\bu_1, \bZ_1)$, $(\bu_2, \bZ_2)$ be two regular solutions to
(\ref{ctvflowf},
\ref{Neumannf}). For $i=1,2$ there holds
\[\bu_{i, t} = \div \bZ_i + A_{\bu_i}(\bu_{i, \bx^j}, {\bZ_{i,j}}).\]
Here and in the rest of this section, $\bu_{i,\bx^j}$ and $\bZ_{i,j}$  
denote, respectively, the derivative of $\bu_i$ in direction of $\bx^j$ and the 
$j$-th component of $\bZ_i$, $i=1,2$, $j=1,\ldots,m$. We calculate
\begin{multline}
\label{uniq1}
 \frac{1}{2} \frac{\dd}{\dd t} \int_\Omega |\bu_1 - \bu_2|^2 = \int_\Omega
(\bu_1 - \bu_2) \cdot (\div \bZ_1 - \div \bZ_2) \\ + \int_\Omega (\bu_1 -
\bu_2) \cdot (\A_{\bu_1}(\bu_{1, \bx^j}, {\bZ_{1,j}}) - \A_{\bu_2}(\bu_{2,
\bx^j}, {\bZ_{2,j}})).
\end{multline}
In the first term on the r.\,h.\,s.\,of \eqref{uniq1} we integrate by parts,
yielding
\[
\int_\Omega (\bu_1 - \bu_2) \cdot (\div \bZ_1 - \div \bZ_2) = - \int_\Omega \left(|\nabla \bu_1| - \nabla \bu_1 \cddot \bZ_2 + |\nabla \bu_2| - \nabla \bu_2 \cddot \bZ_1\right)
\]
which is non-positive as $|\bZ_i|\leq 1$, $i=1,2$. Next, we note that for
any $\bp_1, \bp_2 \in \N$ contained in a fixed compact subset $K$ of $\N$ we
have
\[
|\pi^\perp_{\bp_i}(\bp_1 - \bp_2)| \leq C_1(K)|\bp_1 - \bp_2|^2
\]
for $i=1,2$ (see e.\,g.\,\cite[Lemma A.1]{schikorra}). {As $\bu_1$, $\bu_2$
are continuous on $[0,T]\times \overline{\Omega}$} (we assume without loss of
generality that $T$
is finite), there is indeed a compact set $K=K(\bu_1, \bu_2)$ in $\N$ with
$\bu_i([0,T]\times \overline \Omega) \subset K$, $i=1,2$. Therefore, as
$A_{\bu_i}$ is valued in $T_{\bu_i}\N^\perp$ ($i=1,2$) {there is a
constant $C_2$ depending on $K$ and the norms of $\nabla \bu_1$, $\nabla
\bu_2$ in $L^\infty(]0,T[\times \Omega, \R^N)$ such that
\[
\frac{1}{2} \frac{\dd}{\dd t} \int_\Omega |\bu_1 - \bu_2|^2
\leq C_2\int_\Omega |\bu_1 - \bu_2|^2
\]
for a.\,e.\;}$t \in ]0,T[$. Thus, if $\bu_1(0, \cdot) = \bu_2(0, \cdot)$, we
have $\bu_1
\equiv \bu_2$ due to Gronwall's lemma. 
%
%\ml{The same reasoning applied to regular solutions $\bu, \bu_1, \bu_2, \ldots$ with $\bu_k(0,\cdot) \to \bu(0,\cdot)$ in $L^2(\Omega, \N)$ leads to the stability result.}
%
\section{The approximate system}\label{sec-app}
\label{approximate}
In this section, $\Omega \subset \R^m$ is assumed to be an open,
bounded, smooth, convex domain and $0<\alpha<1$. Given $\eps >0$, $T \in 
]0,\infty]$ we consider
the approximating system for $\bu^\eps \colon [0,T[ \times\Omega \to \N$:
\begin{equation}
 \label{ctvflowapprox}
\bu^\eps_t = \pi_{\bu^\eps} \left(\div \tfrac{\nabla
\bu^\eps}{\sqrt{\eps^2 + |\nabla
\bu^\eps|^2}}\right)\quad \text{in }]0,T[\times \Omega,
 \end{equation}
\begin{equation}
\label{Neumannapprox}
\norm^\Omega \cdot \nabla \bu^\eps = \boldsymbol 0\quad \text{in }]0,T[\times
\partial \Omega,
\end{equation}
\begin{equation}
\label{initapprox}
\bu^\eps(0,\cdot) = \bu_0.
\end{equation}

Further in this section, we will
drop the index $\eps$ and denote $\bZ = \frac{\nabla
\bu}{\sqrt{\eps^2 + |\nabla
\bu|^2}}$.

We will obtain solutions to (\ref{ctvflowapprox}, \ref{Neumannapprox},
\ref{initapprox}) in parabolic H\" older spaces as defined in \cite[Chapter
I]{lsu}. Let us introduce necessary notation. Given numbers $k = 0,1,\ldots$,
$0 <\alpha <1$ we write
$C^{\frac{k+\alpha}{2}, k+\alpha}(\Omega_I, \R^N)$ for the parabolic H\" older
space of order $k+\alpha$. Similarly, we
write $\bu \in C^{\frac{k+\alpha}{2}, k+\alpha}_{loc}(\overline \Omega_I,
\R^N)$ if $\bu \in C^{\frac{k+\alpha}{2}, k+\alpha}(\Omega_K, \R^N)$ for every
interval $K$ compactly included in $I$.

\subsection{Uniform bounds}
\label{uniform}
In this subsection, we prove essential a priori estimates for $\bu \in
C^{\frac{3+\alpha}{2},3+\alpha}_{loc}(\overline{\Omega}_{[0,T[}, \mathcal N)$
solving (\ref{ctvflowapprox}, \ref{Neumannapprox}) with a given $\varepsilon,
T>0$. For
brevity, we denote
\[
v = \left( |\nabla \bu|^2 + \eps^2\right)^\frac{1}{2}, \qquad v_0 = \left(
|\nabla \bu_0|^2 + \eps^2\right)^\frac{1}{2}.
\]
{The basic {energy estimate} reflects the gradient flow structure behind
(\ref{ctvflowapprox}, \ref{Neumannapprox}).}

\begin{lemma}\label{energylemma} Let $\bu \in
C^{\frac{3+\alpha}{2},3+\alpha}_{loc}(\overline{\Omega}_{[0,T[},
\mathcal N)$ satisfy
(\ref{ctvflowapprox}, \ref{Neumannapprox}). Then
\begin{equation}
\label{bound2}
\sup_{t \in [0, T[} \int_\Omega v(t, \cdot) + \int_0^T
\int_\Omega \bu_t^2
\leq
\int_\Omega v_0.
\end{equation}
\end{lemma}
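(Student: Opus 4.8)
The plan is to derive the energy inequality \eqref{bound2} by testing the equation \eqref{ctvflowapprox} with $\bu_t$ and exploiting the fact that the right-hand side is (modulo the projection) the $L^2$-gradient of the smooth approximate energy $\bu \mapsto \int_\Omega \sqrt{\eps^2+|\nabla\bu|^2}$. Concretely, I would first rewrite \eqref{ctvflowapprox} in the form analogous to \eqref{flowA}, namely $\bu_t = \div\bZ + \A_\bu(\bu_{x^i},\bZ_i)$ with $\bZ = \nabla\bu/\sqrt{\eps^2+|\nabla\bu|^2}$. Since $\bu$ takes values in $\N$, we have $\bu_t(t,\bx) \in T_{\bu(t,\bx)}\N$ pointwise, while $\A_\bu(\bu_{x^i},\bZ_i) \in T_{\bu}\N^\perp$; hence $\bu_t \cdot \A_\bu(\bu_{x^i},\bZ_i) = 0$ and multiplying the equation by $\bu_t$ and integrating over $\Omega$ gives
\[
\int_\Omega \bu_t^2 = \int_\Omega \bu_t \cdot \div \bZ.
\]

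Next I would integrate by parts in the right-hand term. This is where the boundary condition \eqref{Neumannapprox} enters: since $\norm^\Omega\cdot\nabla\bu = \boldsymbol 0$ on $\partial\Omega$ and $\bZ$ is a scalar multiple of $\nabla\bu$, also $\norm^\Omega\cdot\bZ = \boldsymbol 0$ on $\partial\Omega$, so the boundary term vanishes and
\[
\int_\Omega \bu_t \cdot \div\bZ = -\int_\Omega \nabla\bu_t \cddot \bZ = -\int_\Omega \frac{\nabla\bu \cddot \nabla\bu_t}{\sqrt{\eps^2+|\nabla\bu|^2}} = -\frac{\dd}{\dd t}\int_\Omega \sqrt{\eps^2+|\nabla\bu|^2} = -\frac{\dd}{\dd t}\int_\Omega v.
\]
Here I use that $\partial_t \sqrt{\eps^2+|\nabla\bu|^2} = (\nabla\bu\cddot\nabla\bu_t)/\sqrt{\eps^2+|\nabla\bu|^2}$ pointwise, and that the parabolic Hölder regularity $\bu \in C^{\frac{3+\alpha}{2},3+\alpha}_{loc}$ is more than enough to justify differentiating under the integral sign on compact subintervals of $[0,T[$ and performing the integration by parts (all integrands are continuous up to the boundary on $\overline\Omega_K$ for $K \Subset [0,T[$). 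Combining the two displays yields $\int_\Omega \bu_t^2 + \frac{\dd}{\dd t}\int_\Omega v = 0$ on $]0,T[$.

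Finally I would integrate this identity in time from $0$ to any $t_1 \in [0,T[$, using $\bu(0,\cdot) \equiv \bu_0$ (so that $\int_\Omega v(0,\cdot) = \int_\Omega v_0$), to get
\[
\int_\Omega v(t_1,\cdot) + \int_0^{t_1}\int_\Omega \bu_t^2 = \int_\Omega v_0,
\]
and then take the supremum over $t_1 \in [0,T[$ in the first term and let $t_1 \uparrow T$ in the second (monotone convergence) to obtain \eqref{bound2}, in fact with equality. There is essentially no hard obstacle here: the only points requiring a modicum of care are the justification that $\norm^\Omega\cdot\bZ = \boldsymbol 0$ follows from \eqref{Neumannapprox} (immediate, since $\bZ \parallel \nabla\bu$), and the orthogonality $\bu_t \perp T_{\bu}\N^\perp$, which holds because $\bu(t,\cdot)$ is an $\N$-valued curve. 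The local-in-time Hölder regularity guarantees all manipulations are classical, so the computation is rigorous as stated rather than merely formal.
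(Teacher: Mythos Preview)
Your proof is correct and follows essentially the same approach as the paper: both compute $\frac{\dd}{\dd t}\int_\Omega v = \int_\Omega \bZ \cddot \nabla \bu_t = -\int_\Omega \bu_t^2$ using integration by parts, the Neumann condition, and the orthogonality $\bu_t \in T_{\bu}\N$, then integrate in time. Your write-up merely spells out in greater detail the steps the paper compresses into a single displayed line.
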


\begin{proof}
The estimate follows from the equality
\begin{equation*}
 \frac{\dd}{\dd t} \int_\Omega v = \int_\Omega \bZ
\cddot \nabla \bu_t = - \int_\Omega \bu_t^2
\end{equation*}
which holds as $\bu_t(t,\bx) \in T_{\bu(t,\bx)} \N$ for $(t,\bx) \in ]0,T[ \times\Omega$.
\end{proof}

In order to derive further uniform bounds, our main tool is the following
version of Bochner's identity (see \cite[Chapter 1.]{linbook} for the case of
harmonic maps).

\begin{lemma}
\label{bochnerlemma}
Let $\bu \in
C^{\frac{3+\alpha}{2},3+\alpha}_{loc}(\overline{\Omega}_{[0,T[},
\mathcal N)$ satisfy
\eqref{ctvflowapprox}. Then
 \begin{equation}
 \label{bochner}
\frac{1}{2}\frac{\dd}{\dd t} |\nabla \bu|^2 = \div (\nabla \bu
\cddot \nabla \bZ) - (\pi_\bu \nabla^2 \bu) \cdddot \nabla \bZ +
\bZ_i \cdot \mathcal R^\N_\bu (\bu_{x^i},
\bu_{x^j}) \bu_{x^j}.
\end{equation}
 \end{lemma}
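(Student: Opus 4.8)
The plan is to prove the Bochner-type identity \eqref{bochner} by a direct computation, differentiating $\tfrac12|\nabla\bu|^2$ in time and using the flow equation in the form \eqref{flowA}. First I would establish the equivalence of \eqref{ctvflowZ} and \eqref{flowA}: since $\bu$ takes values in $\N$, differentiating the relation $\pi^\perp_\bu(\bu_{x^i})=\boldsymbol 0$ (or rather using that $\bu_{x^i}\in T_\bu\N$) and recalling that for any tangent vector field the normal part of the ambient derivative is given by the second fundamental form, one gets $\div\bZ = \pi_\bu(\div\bZ) + \pi^\perp_\bu(\div\bZ)$, and the normal part equals $-\A_\bu(\bu_{x^i},\bZ_i)$ because $\bZ_i\in T_\bu\N$ forces $\partial_{x^i}(\pi^\perp_\bu \bZ_i)=\boldsymbol 0$, i.e. $\pi^\perp_\bu(\bZ_{i,x^i}) = -(\partial_{x^i}\pi^\perp_\bu)\bZ_i = \A_\bu(\bu_{x^i},\bZ_i)$ up to sign conventions. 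This is the "equivalent form" the authors already invoke, so I would just cite that and work with \eqref{flowA}.

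Next, the core computation: write
\[
\tfrac12\partial_t|\nabla\bu|^2 = \bu_{x^i}\cdot \partial_{x^i}\bu_t = \bu_{x^i}\cdot \partial_{x^i}\big(\div\bZ + \A_\bu(\bu_{x^j},\bZ_j)\big).
\]
For the first piece, $\bu_{x^i}\cdot\partial_{x^i}\div\bZ = \div(\bu_{x^i}\cdot \bZ_{j,x^j}) \cdot$(with the appropriate index bookkeeping, really $\div(\nabla\bu\cddot\nabla\bZ)$ type term) minus $\nabla^2\bu\cdddot\nabla\bZ$; more precisely integrate the product rule $\bu_{x^i}\cdot\partial_{x^i}\partial_{x^j}\bZ_j = \partial_{x^j}(\bu_{x^i}\cdot\partial_{x^i}\bZ_j) - \bu_{x^ix^j}\cdot\bZ_{i,x^j}$, and the resulting $\nabla^2\bu\cdddot\nabla\bZ$ term gets split as $\pi_\bu\nabla^2\bu\cdddot\nabla\bZ + \pi^\perp_\bu\nabla^2\bu\cdddot\nabla\bZ$. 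For the second piece, $\bu_{x^i}\cdot\partial_{x^i}\A_\bu(\bu_{x^j},\bZ_j)$: since $\A_\bu$ is normal-valued and $\bu_{x^i}$ is tangent, only the derivative hitting the projection/base point survives in a useful way, and this is exactly where the curvature term is generated. The key algebraic fact is the Gauss equation: the tangential part of derivatives of the second fundamental form, contracted appropriately, reproduces $\mathcal R^\N_\bu(\bu_{x^i},\bu_{x^j})$. Concretely one uses $\partial_{x^i}\A_\bu(\bu_{x^j},\cdot)$ combined with $\bZ_j = \pi_\bu\bZ_j$ and the identity $\bu_{x^i}\cdot\partial_{x^i}(\pi^\perp_\bu\bZ_j)=\boldsymbol 0$ to trade the normal second-derivative term $\pi^\perp_\bu\nabla^2\bu\cdddot\nabla\bZ$ against $\A$-squared terms, and then Gauss's equation $\bv\cdot\mathcal R^\N_\bp(\bv,\bw)\bw = \A_\bp(\bv,\bv)\cdot\A_\bp(\bw,\bw) - |\A_\bp(\bv,\bw)|^2$ (polarized) converts the surviving quadratic-in-$\A$ expression into $\bZ_i\cdot\mathcal R^\N_\bu(\bu_{x^i},\bu_{x^j})\bu_{x^j}$.

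The main obstacle is the careful index-and-projection bookkeeping in the middle step: one must correctly identify which of the several second-derivative contractions are tangential and which normal, and show that the normal ones cancel against the terms coming from differentiating $\A_\bu$, leaving precisely a Riemann curvature contraction and the clean divergence term. It is easy here to drop a sign or mis-assign a $\pi_\bu$ versus $\pi^\perp_\bu$. I would organize this by systematically writing every term as tangential-plus-normal, using repeatedly that $\bu_{x^i}$, $\bZ_i$, $\bu_t$ are tangent while $\A_\bu(\cdot,\cdot)$ and $\pi^\perp_\bu\nabla^2\bu$ are normal, and that $\bu_{x^i}\cdot\partial_{x^i}(\pi_\bu w) = \bu_{x^i}\cdot\partial_{x^i}w$ whenever $w$ is tangent-valued along $\bu$. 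A sanity check is the special case $\N=\mathbb S^{N-1}$, where $\mathcal R^\N_\bp(\bv,\bw)\bw = |\bw|^2\bv - (\bv\cdot\bw)\bw$ and $\A_\bp(\bv,\bw) = -(\bv\cdot\bw)\bp$, and \eqref{bochner} should reduce to a known identity consistent with $\bu_t = \div\bZ + \bu|\nabla\bu|$ in the $\eps\to 0$ limit (here with $v$ in place of $|\nabla\bu|$); matching that case pins down all constants and signs.
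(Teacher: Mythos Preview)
Your proposal is correct and follows essentially the same route as the paper: rewrite the flow as $\bu_t=\div\bZ+\A_\bu(\bu_{x^i},\bZ_i)$, differentiate $\tfrac12|\nabla\bu|^2$, use the product rule to extract the divergence term, split $\nabla^2\bu\cdddot\nabla\bZ$ into tangential and normal parts, and convert the resulting $\A$-squared pieces into the curvature contraction via the Gauss equation. The only difference is organizational: the paper carries out the tangential/normal bookkeeping concretely by fixing a local orthonormal frame $(\bN^k)$ for the normal bundle and computing each term explicitly in that frame, rather than working abstractly with $\pi_\bu$ and $\pi^\perp_\bu$ as you propose.
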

\begin{proof}
Given $\bx \in \Omega$, we choose a local orthonormal frame
$(\bN^k)_{k=1,\ldots,N-n}$ on $\N$ around $\bu(\bx)$. Using this frame, we
express
\begin{equation}
\label{defA}
 \pi^\perp_\bp = \bN^k_\bp \otimes \bN^k_\bp, \quad
\A_\bp(\bX, \bY) =   (\bX \cdot D_{ \bp} \bN^k\, \bY)
\bN^k_\bp,
\end{equation}
where $\bX, \bY \in T_{\bp}\N$ and $D_{\bp} \bN^k \colon T_{\bp} \N \to
\R^N$ is the tangent map to $\bN^k$ at $\bp \in \N$, that is $D_{\bw} \bN^k\,
\bw_s = (\bN^k_\bw)_s$ for any $C^1$ curve $s \mapsto \bw(s) \in \N$.

First, we calculate
\begin{equation}
 \label{pitoA}
-\bN^k_\bu \otimes \bN^k_\bu \, \div \bZ = - \bN^k_\bu (\underbrace{\bN^k_\bu
\cdot \bZ_j}_{=0})_{x^j} + \bN^k_\bu ((\bN^k_\bu)_{x^j} \cdot \bZ_j) =
\A_{\bu}(\bu_{x^j},
\bZ_j)
 \end{equation}
which allows us to rewrite \eqref{ctvflowapprox} as
\begin{equation}
\label{ctvflowapproxA}
\bu_t = \div \bZ + \A_{\bu}(\bu_{x^j}, \bZ_j).
\end{equation}

Using \eqref{ctvflowapproxA}, we obtain
\begin{multline}
 \label{passo1}
\frac{1}{2}\frac{\dd}{\dd t} |\nabla \bu|^2 = \nabla \bu \cddot \nabla \div
\bZ + \nabla \bu \cddot \nabla \A_\bu(\bu_{x^i},\bZ_i)
\\ = \left(\nabla\bu\cddot \nabla \bZ_i\right)_{x^i}
- \nabla^2 \bu \cdddot \nabla \bZ +
\big(\underbrace{\bu_{x^j} \cdot \A_\bu(\bu_{x^i}, \bZ_i)}_{=0}\big)_{x^j}
- \Delta \bu \cdot \A_\bu(\bu_{x^i}, \bZ_i)
\end{multline}
where in the last line we used that $\A_\bu$ is orthogonal to
$\bu_{x^j} \in T_{\bu} \N$.

Next, again using \eqref{defA}, we perform the following
calculations:
\begin{eqnarray*}
\big(\pi^\perp_\bu \nabla^2\bu\big)\cdddot \nabla \bZ &=&
\big(\bN^k_\bu \otimes \bN^k_\bu \nabla^2\bu \big)\cdddot \nabla \bZ
= \bN^k_\bu \cdot \bu_{x^i x^j}\, \bN^k_\bu\cdot
\bZ_{i,x^j} \\ &=& \big((\underbrace{\bN^k_\bu \cdot \bu_{x^i}}_{=0})_{x^j} -
(\bN^k_\bu)_{x^j} \cdot \bu_{x^i} \big)\big(
(\underbrace{\bN^k_\bu \cdot \bZ_i}_{=0})_{x^j}- (\bN^k_\bu)_{x^j} \cdot
\bZ_i\big)\\ &=& \A_\bu(\bu_{x^i}, \bu_{x^j}) \cdot \A_\bu(\bu_{x^j},\bZ_i)
\end{eqnarray*}
and similarly
\[
\pi^\perp_\bu \Delta \bu = (\bN^k_\bu \cdot\bu_{x^jx^j})\bN^k_\bu
=
  -\A_\bu(\bu_{x^j},
\bu_{x^j}),
\]
so that
\begin{equation*}
\Delta \bu \cdot \A_\bu(\bu_{x^i}, \bZ_i)= \pi_\bu^\perp \Delta \bu
\cdot
\A_\bu(\bu_{x^i}, \bZ_i) = - A_\bu(\bu_{x^j},\bu_{x^j})\cdot
\A_\bu(\bu_{x^i}, \bZ_i).
\end{equation*}

Hence, \eqref{passo1} may be rewritten as
\begin{multline*}
\frac{1}{2}\frac{\dd}{\dd t} |\nabla \bu|^2
= \left(\nabla\bu\cddot \nabla \bZ_i\right)_{x^i} - \left(\pi_\bu \nabla^2
\bu\right)
\cdddot \nabla \bZ
\\ - \A_\bu(\bu_{x^i}, \bu_{x^j}) \cdot \A_\bu(\bu_{x^j},\bZ_i) +
\A_\bu(\bu_{x^j},\bu_{x^j})\cdot \A_\bu(\bu_{x^i}, \bZ_i).
\end{multline*}

Finally, we recall the Gauss-Codazzi equation
\begin{equation*}
 \bW \cdot \mathcal R^\N_\bp(\bX, \bY) \bZ = \A_\bp(\bY,\bZ)\cdot
\A_\bp(\bX,\bW) -
\A_\bp(\bX,\bZ)\cdot \A_\bp(\bY,\bW)
\end{equation*}
for any quadruple of vectors $\bX, \bY, \bZ, \bW \in T_{\bp} \N$, $p \in \N$,
which finishes
the proof.
\end{proof}

{We are now ready to derive uniform Lipschitz bounds.}

\begin{lemma}
 \label{bound}Let $\bu \in
C^{\frac{3+\alpha}{2},3+\alpha}_{loc}(\overline{\Omega}_{[0,T[},
\mathcal N)$ satisfy
(\ref{ctvflowapprox}-\ref{initapprox}).

\begin{itemize}
\item [(i)] If $K_\N \in ]0, \infty[$, then
\begin{equation}
 \label{blowupest}
 \left\|v(t, \cdot)\right\|_{L^\infty} \leq \frac{\left
\|v_0\right\|_{L^\infty}}{1 - t \, K_\N \left
\|v_0\right\|_{L^\infty}}
\end{equation}
for $t \in ]0,\min(T_\dagger, T)[$, where $T_\dagger := \left(K_\N \left
\|v_0\right\|_{L^\infty}\right)^{-1}$.
\item[(ii)] If $K_\N \leq 0$, then  for $0 < t < T < T_\dagger \colon\!\!\! = 
+\infty$ there holds
 \begin{equation}
 \label{bound1}
   \left\| v(t,\cdot) \right\|_{L^\infty}
\leq
\left\|v_0 \right\|_{L^\infty}.
 \end{equation}
\end{itemize}
\end{lemma}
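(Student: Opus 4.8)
The plan is to run a maximum-principle argument on the Bochner-type identity \eqref{bochner}, using the approximate equation's nondegenerate ellipticity to control the error terms. First I would set $w = |\nabla \bu|^2$ (so $v^2 = w + \eps^2$) and, away from the zero set of $\nabla\bu$, rewrite \eqref{bochner} as a scalar parabolic inequality for $w$. The key is to extract from $-(\pi_\bu \nabla^2 \bu)\cdddot\nabla\bZ$ a genuinely dissipative contribution: since $\bZ = \nabla\bu/v$, one has $\nabla\bZ = \nabla^2\bu/v - (\nabla\bu\otimes\nabla v)/v^2$, so the good term reproduces, up to a positive factor $1/v$, the quantity $|\pi_\bu\nabla^2\bu|^2$ (after discarding or sign-checking the $\nabla v$ piece, which contributes a term proportional to $|\nabla w|^2/v^3$ that one wants to keep on the favourable side or absorb via Kato's inequality $|\nabla|\nabla\bu||\le|\nabla^2\bu|$). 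The divergence term $\div(\nabla\bu\cddot\nabla\bZ)$ becomes the leading second-order operator once one expands it, and together with the Neumann condition it will integrate or evaluate favourably at an interior/boundary maximum. The curvature term $\bZ_i\cdot\mathcal R^\N_\bu(\bu_{x^i},\bu_{x^j})\bu_{x^j}$ is the only term with a definite sign issue: by definition \eqref{sectionalcurvature} of $K_\N$ and the bound $|\bZ|\le 1$ (indeed $|\bZ|\le |\nabla\bu|/v$), it is bounded above by $K_\N$ times a quadratic expression in $\nabla\bu$, schematically $\le K_\N |\nabla\bu|^2\cdot(|\nabla\bu|/v)\le K_\N\, v\, w$ in case (i) and is $\le 0$ in case (ii).

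For part (ii), once the curvature term is nonpositive and the Hessian term is dissipative, $w$ (hence $v$) satisfies a parabolic differential inequality $\partial_t w \le \div(\cdots) + (\text{lower order with no bad sign})$ with homogeneous Neumann data, so the spatial maximum of $w(t,\cdot)$ is nonincreasing in $t$; this yields $\|v(t,\cdot)\|_{L^\infty}\le\|v_0\|_{L^\infty}$. One must be careful at points where $\nabla\bu=0$: there $w$ attains its minimum, not maximum, so the maximum of $w$ is always attained where the identity \eqref{bochner} is classically valid (the solution is $C^{(3+\alpha)/2,3+\alpha}_{loc}$, so everything is smooth enough); alternatively work with $v^2$ directly, which is bounded below by $\eps^2>0$ and smooth everywhere, making $\nabla\bZ$ and all manipulations legitimate pointwise.

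For part (i), after dropping the dissipative Hessian term (it only helps) we arrive at $\partial_t w \le \div(\nabla\bu\cddot\nabla\bZ) + 2K_\N\, v\, w$, and since $w\le v^2$ and at an interior spatial maximum the divergence term is $\le 0$, the function $V(t):=\|v(t,\cdot)\|_{L^\infty}^2 = \sup_\Omega(w+\eps^2)$ satisfies, in the sense of forward difference quotients (Hamilton's trick / standard ODE comparison for $\sup$ of a parabolic subsolution), $V'\le 4K_\N V^{3/2}$, i.e. $(V^{-1/2})' \ge -2K_\N$. Integrating from $0$ gives $V(t)^{-1/2}\ge V(0)^{-1/2} - 2K_\N t$, which rearranges to $\|v(t,\cdot)\|_{L^\infty}\le \|v_0\|_{L^\infty}/(1 - t K_\N\|v_0\|_{L^\infty})$ for $t<T_\dagger=(K_\N\|v_0\|_{L^\infty})^{-1}$, as claimed; the same computation specialised to $K_\N\le 0$ re-proves (ii).

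The main obstacle I anticipate is the rigorous handling of the $\sup_\Omega$ as a function of $t$ together with the Neumann boundary condition: one needs either to justify differentiating $\|v(t,\cdot)\|_{L^\infty}$ (only Lipschitz in $t$ in general, hence work with $\limsup$ of difference quotients and the fact that at a boundary maximum point the Neumann condition \eqref{Neumannapprox} forces the normal derivative of $w$ to vanish so no boundary term with a bad sign appears), or to avoid it entirely by integrating \eqref{bochner} against $p w^{p-1}$, controlling $\int_\Omega w^p$, and sending $p\to\infty$ — the latter is cleaner but requires tracking the curvature constant through the $L^p$ chain. I would present the maximum-principle version, citing the parabolic comparison principle from \cite{lsu} and the regularity of $\bu$ to legitimise all pointwise computations, and flag that the zero set of $\nabla\bu$ is harmless because $v\ge\eps$.
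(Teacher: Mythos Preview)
Your overall strategy is sound and close in spirit to the paper's, but there is a genuine gap in the boundary analysis. You write that ``at a boundary maximum point the Neumann condition \eqref{Neumannapprox} forces the normal derivative of $w$ to vanish.'' This is not correct: the Neumann condition $\norm^\Omega\cdot\nabla u^i=0$ does \emph{not} by itself imply $\norm^\Omega\cdot\nabla|\nabla\bu|^2=0$. If you differentiate the Neumann condition tangentially and contract with $\nabla u^i$, what you actually get is
\[
\norm^\Omega\cdot\nabla|\nabla\bu|^2 \;=\; -\,2\,\norm^\Omega\cdot\A^{\partial\Omega}(\nabla u^i,\nabla u^i),
\]
which has the favourable sign \emph{only because $\Omega$ is convex}. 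Convexity is precisely the hypothesis that makes the boundary term harmless, and your write-up never invokes it; without it the argument fails (and indeed the lemma is stated in a section where convexity of $\Omega$ is a standing assumption).

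The paper's own proof takes the $L^p$ route you mention as an alternative: multiply \eqref{bochner} by $v^{p-2}$, integrate, and show that the two interior Hessian terms are non-positive via the explicit identity $\nabla\bZ = v^{-1}(\nabla^2\bu - \bZ\otimes(\bZ\cddot\nabla^2\bu))$ and the bound $|\bZ|\le 1$; the boundary integral $\int_{\partial\Omega} v^{p-2}\nabla\bu\cddot\nabla\bZ\cdot\norm^\Omega$ is then computed to equal $-\int_{\partial\Omega} v^{p-3}\norm^\Omega\cdot\A^{\partial\Omega}(\nabla u^i,\nabla u^i)\le 0$ by convexity. Passing $p\to\infty$ gives the differential inequality $\tfrac{\dd}{\dd t}\|v\|_{L^\infty}\le K_\N\|v\|_{L^\infty}^2$. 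Your pointwise maximum-principle version can be made to work once you insert the convexity step for the boundary (and once you verify, as the paper does in \eqref{est2}, that the full Hessian term $-(\pi_\bu\nabla^2\bu)\cdddot\nabla\bZ$ is non-positive, which is more than ``discard or sign-check''), but as written the boundary step is the missing ingredient. A minor point: your ODE should read $V'\le 2K_\N V^{3/2}$, not $4K_\N V^{3/2}$; with the wrong constant you would not recover the stated bound.
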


\begin{proof}
Given a finite $p \geq 1$, using \eqref{bochner} and integrating by parts,
we calculate
\begin{multline}
\label{pnorm}
\frac{1}{p} \frac{\dd}{\dd t} \int_\Omega v^p =\frac{1}{2}
\int_\Omega v^{p-2} \nabla \bu \cddot \nabla \bu_t
\\ =
-
\int_\Omega
v^{p-2} \big(\pi_\bu \nabla^2 \bu\big)
\cdddot \nabla \bZ - (p-2) \int_\Omega v^{p-4} \nabla \bu \cddot \nabla^2 \bu
\cdot \nabla \bZ \cddot \nabla \bu \\ +\int_{\partial \Omega} v^{p-2} \nabla
\bu \cddot \nabla
\bZ \cdot
\norm^{\Omega} + \int_\Omega v^{p-3} \bu_{x^i}
\cdot \mathcal R^\N_\bu (\bu_{x^i},
\bu_{x^j}) \bu_{x^j}.
\end{multline}
We have
\[ Z^i_{j,x^k} = v^{-1}\left(
u^i_{x^j x^k} - Z^i_j (\nabla \bu_{x^k} \cddot \bZ)\right)\]
and
\[ \nabla \bZ_j \cddot \nabla \bu = v^{-1} (\nabla \bu
\cddot \nabla \bu_{x^j} - Z^i_j \,
Z^i_k \, \nabla \bu_{x^k} \cddot \nabla \bu)\]
for $i=1,\ldots,N$ and $j,k =1, \ldots, m$. Thus, we can rewrite
\begin{equation}
\label{est1}
\nabla \bu \cddot \nabla^2 \bu \cdot \nabla \bZ \cddot \nabla
\bu =  v^{-1} \nabla
\bu \cddot \nabla \bu_{x^j} \cdot (I^N_{jk} - Z^i_j \otimes Z^i_k) \cdot
\nabla \bu_{x^k} \cddot \nabla \bu
\end{equation}
(we use the notation $\boldsymbol I^l$ for the $l$-dimensional identity
matrix). On the other hand,
\begin{equation}
\label{est2}
(\pi_\bu \nabla^2 \bu) \cdddot \nabla \bZ = v^{-1} (\pi_\bu
\nabla \bu_{x^j}) \cddot
(\boldsymbol{I}^m\!\otimes\boldsymbol{I}^N - \bZ \otimes
\bZ) \cddot \nabla \bu_{x^j} .
\end{equation}
From \eqref{est1}, \eqref{est2} and the fact that $|\bZ|\leq 1|$, it is
clear that, provided $p\geq 2$, the first two terms on the r.\;h.\;s.\;of
\eqref{pnorm} are non-positive. To
treat
the remaining boundary term, we extend $\norm^\Omega$ to a normal tubular
neighbourhood of $\partial \Omega$ in such a way that it is constant in
the fibers, and calculate (at points in $\partial \Omega$)
\begin{equation*}
 \nabla \bu \cddot \nabla \bZ \cdot \norm^\Omega = \nabla \bu \cddot
\nabla (\norm^\Omega \cdot \bZ)  - \nabla u^i
\cdot \nabla \norm^\Omega \cdot \bZ^i = - v^{-1} \norm^\Omega \cdot
\A^{\partial \Omega} (\nabla u^i,
\nabla u^i)
\end{equation*}
where by $\mathcal A^{\partial \Omega}$ we denoted the second fundamental form
of
hypersurface $\partial \Omega$. As $\Omega$ is convex, $\norm^\Omega \cdot
\mathcal A^{\partial
\Omega}$ is
non-negative. This ends the proof of \eqref{bound1} in the case $K_\N \leq 0$.

Now, assume that $K_\N \in ]0,\infty[$.  By virtue of previous calculations
and \eqref{sectionalcurvature},
we have
\[\frac{\dd}{\dd t} \left(\int_\Omega v^p\right)^{\frac{1}{p}} \leq
\left(\int_\Omega v^p\right)^{\frac{1}{p} -1} K_\N \int_\Omega v^{p+1} \leq
K_\N \left(\int_\Omega v^p\right)^{\frac{1}{p}} \|v\|_{L_\infty}.\]
Passing to the limit $p \to \infty$ we obtain, at least in a weak sense,
\[\frac{\dd}{\dd t} \|v\|_{L^\infty} \leq K_\N \|v\|_{L^\infty}^2\]
which implies \eqref{blowupest}.
\end{proof}

\subsection{Existence for the approximate system}
\label{localexistence}
In order to prove existence of solutions to the approximate system we proceed 
similarly as in
\cite[Section 3.]{hunger1}. The assumption that the embedding of $\N$ into
$\R^N$ is closed enables us to construct a metric $h$ on
$\R^N$ such that $(\N,g)$ is a totally geodesic Riemannian submanifold
of $(\R^N, h)$ (see Lemma \ref{totally} in the appendix),  i.\,e.,
\begin{itemize}
 \item the restriction of $h$ to $T\N$ coincides with $g$, that is $\left.
h_{\bp} \right|_{T_{\bp}\N \times T_{\bp}\N} \equiv g_{\bp}$  for $\bp \in \N$,
 \item there is a tubular neighborhood $\mathcal T$ of $\N$ in
$\R^N$ such that the involution $\tau \colon \mathcal T \to
\mathcal T$ given by
multiplication
by $-1$ in the fibers of $ \mathcal T$ is an isometry.
\end{itemize}

The gradient flow of the unconstrained functional $\int_\Omega |\nabla
\bu|_h$
defined for any regular enough function $\bu \colon \Omega \to \R^N$ is
expressed by the system
\begin{equation}
 \label{embedapprox}
 u^i_t = \div \tfrac{\nabla u^i}{\sqrt{\eps^2 + |\nabla \bu|_h^2}} +
\tfrac{1}{\sqrt{\eps^2 + |\nabla \bu|_h^2}} \Gamma^i_{jk}(\bu) u^j_{x^l}
u^k_{x^l},
\end{equation}
\begin{equation}
 \label{embedNeumann}
 \norm^\Omega \cdot \nabla u^i = \boldsymbol 0,
\end{equation}
where $i=1, \ldots, N$ and $\Gamma^i_{jk}$ are the Christoffel symbols of
$(\R^N, h)$. As $h$ restricted to $T\N$ coincides with $g$, the system
(\ref{embedapprox}, \ref{embedNeumann}) is identical to (\ref{ctvflowapprox},
\ref{embedNeumann}) as  long as the range of $\bu$ is contained in $\N$. In
order for $C^{\frac{3+\alpha}{2}, 3+
\alpha}_{loc}(\Omega_{[0,T[}, \N)$ solutions to the system
(\ref{embedapprox}, \ref{embedNeumann}) with initial datum $\bu_0$ to exist,
the following compatibility conditions
\begin{equation}
\label{comp1}
\norm^\Omega \cdot \nabla u^i_0 = 0
\end{equation}
\begin{equation}  \label{comp2}
\norm^\Omega \cdot \nabla \left(\div \tfrac{\nabla u_0^i}{\sqrt{\eps^2 +
|\nabla \bu_0|_h^2}} +
\tfrac{1}{\sqrt{\eps^2 + |\nabla \bu_0|_h^2}} \Gamma^i_{jk}(\bu_0) u^j_{0,x^l}
u^k_{0,x^l}\right)= 0
\end{equation}
on $\partial \Omega$ for $i=1, \ldots, N$ need to be satisfied.
\begin{prop}
 \label{existapprox}
Suppose that $K_\N < \infty$ and $\alpha \in ]0,1[$. Let $\bu_0 \in C^{3 +
\alpha}(\Omega, \N)$ satisfy (\ref{comp1}, \ref{comp2}).
Then for any $\eps >0$ the system (\ref{ctvflowapprox}-\ref{initapprox}) has a 
unique solution
 \[\bu \in C^{\frac{3+\alpha}{2}, 3+
\alpha}_{loc}(\overline{\Omega}_{[0,T_\dagger[}, \N)\] with $T_\dagger=
T_\dagger(\|\nabla \bu_0\|_{L^\infty}, K_\N) \in
]0, \infty]$ defined in Lemma \ref{bound}.
\end{prop}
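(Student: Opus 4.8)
The plan is to prove Proposition \ref{existapprox} in two stages: first establish short-time existence of a classical solution by appealing to standard parabolic theory for quasilinear systems, and then extend the solution up to the maximal time $T_\dagger$ using the a priori Lipschitz estimates from Lemma \ref{bound} (together with the energy bound of Lemma \ref{energylemma}). The key point enabling the first stage is the reduction, explained in the text, of the constrained system (\ref{ctvflowapprox}, \ref{Neumannapprox}) to the unconstrained system (\ref{embedapprox}, \ref{embedNeumann}) for maps into $(\R^N, h)$: since $(\N, g)$ is totally geodesic in $(\R^N, h)$ and $\bu_0(\Omega) \subset \N$, any solution of (\ref{embedapprox}, \ref{embedNeumann}) with datum $\bu_0$ automatically stays in $\N$ (the submanifold $\N$ being totally geodesic means it is invariant under the flow, which one verifies by applying $\pi^\perp_{\bu}$ to the equation and using that the normal component vanishes identically, or more simply by the involution $\tau$ and uniqueness), so it solves (\ref{ctvflowapprox}-\ref{initapprox}) as well.

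First I would verify that (\ref{embedapprox}, \ref{embedNeumann}) is a uniformly parabolic quasilinear system with smooth coefficients. For fixed $\eps > 0$ the principal part $u^i \mapsto \div \frac{\nabla u^i}{\sqrt{\eps^2 + |\nabla \bu|_h^2}}$ has coefficients depending smoothly on $\bu$ and $\nabla \bu$, and its linearization has ellipticity constants controlled from below by a positive multiple of $\eps^2 (\eps^2 + |\nabla \bu|_h^2)^{-3/2}$ and from above by $(\eps^2 + |\nabla \bu|_h^2)^{-1/2}$; the lower-order term $\frac{1}{\sqrt{\eps^2 + |\nabla \bu|_h^2}} \Gamma^i_{jk}(\bu) u^j_{x^l} u^k_{x^l}$ is smooth in $(\bu, \nabla \bu)$ and has at most quadratic growth in $\nabla \bu$. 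With $\bu_0 \in C^{3+\alpha}(\Omega, \N)$ satisfying the compatibility conditions (\ref{comp1}, \ref{comp2}) and $\Omega$ smooth, the theory of \cite{lsu} (Chapter VII, together with the conormal boundary condition treated as in Chapter V) gives a unique solution $\bu \in C^{\frac{3+\alpha}{2}, 3+\alpha}(\overline{\Omega}_{[0, \delta[}, \R^N)$ on some maximal time interval $[0, T_{\max}[$, with the standard continuation criterion that either $T_{\max} = \infty$ or $\|\nabla \bu(t, \cdot)\|_{L^\infty(\Omega)} \to \infty$ as $t \to T_{\max}^-$ (more precisely, the $C^{1+\alpha}$ norm in space blows up; parabolic bootstrapping upgrades an $L^\infty$ gradient bound to a full $C^{\frac{3+\alpha}{2}, 3+\alpha}$ bound on compact subintervals, so $L^\infty$ control of $\nabla\bu$ suffices to continue).

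Next I would invoke the invariance of $\N$: since $(\R^N, h)$ is constructed so that $\tau$, multiplication by $-1$ in the fibers of the tubular neighbourhood $\mathcal T$, is an isometry fixing $\N$ pointwise, and since $\bu_0$ maps into $\N = \mathrm{Fix}(\tau)$, the map $\tau \circ \bu$ solves the same system (\ref{embedapprox}, \ref{embedNeumann}) with the same datum wherever $\bu$ takes values in $\mathcal T$; by uniqueness $\tau \circ \bu = \bu$ near $t=0$, hence $\bu$ takes values in $\N$ on a maximal subinterval which, by continuity and openness, must be all of $[0, T_{\max}[$. On this interval $\bu$ therefore solves (\ref{ctvflowapprox}-\ref{initapprox}). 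Now Lemma \ref{bound} applies: in case $K_\N \in ]0, \infty[$ it gives $\|v(t,\cdot)\|_{L^\infty} \le \|v_0\|_{L^\infty}(1 - t K_\N \|v_0\|_{L^\infty})^{-1}$ for $t < \min(T_\dagger, T_{\max})$, which is a finite bound on $\|\nabla \bu(t,\cdot)\|_{L^\infty}$ on every compact subinterval of $[0, T_\dagger[$; in case $K_\N \le 0$ it gives $\|v(t,\cdot)\|_{L^\infty} \le \|v_0\|_{L^\infty}$ for all $t$, and $T_\dagger = \infty$. By the continuation criterion this forces $T_{\max} \ge T_\dagger$, and the local parabolic regularity estimates give $\bu \in C^{\frac{3+\alpha}{2}, 3+\alpha}_{loc}(\overline{\Omega}_{[0, T_\dagger[}, \N)$. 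Uniqueness in this class follows from uniqueness for (\ref{embedapprox}, \ref{embedNeumann}), or alternatively from Theorem \ref{uniqueness} applied to the non-degenerate (hence a fortiori) equation.

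The main obstacle I anticipate is the bookkeeping at the interface between the abstract parabolic existence theorem and the a priori estimates: one must make sure the compatibility conditions (\ref{comp1}, \ref{comp2}) are exactly those required for a $C^{\frac{3+\alpha}{2}, 3+\alpha}$ solution with the conormal (Neumann-type) boundary operator $\norm^\Omega \cdot \nabla u^i$, that the nonlinear structure genuinely fits the hypotheses of \cite{lsu} (in particular that the quadratic gradient term does not spoil global-in-time continuation once $\nabla \bu$ is bounded — it does not, because once $\|\nabla \bu\|_{L^\infty}$ is controlled the problem becomes a linear parabolic system with H\"older coefficients and Schauder estimates close the bootstrap), and that the passage $p \to \infty$ in the proof of Lemma \ref{bound}, which is only asserted "in a weak sense" there, indeed yields the pointwise bound needed here — but for this last point it suffices to keep $p$ finite, obtaining $\|v(t,\cdot)\|_{L^p(\Omega)}$ bounds uniformly in $p$ via Gr\"onwall on $[0, T_\dagger - \delta]$ and then let $p \to \infty$, which is legitimate on the bounded domain $\Omega$.
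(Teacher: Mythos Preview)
Your proposal is correct and follows essentially the same strategy as the paper: pass to the unconstrained system (\ref{embedapprox}, \ref{embedNeumann}) in $(\R^N,h)$, obtain a short-time classical solution, show it stays in $\N$ via the involution $\tau$ and uniqueness, and continue to $T_\dagger$ using the $L^\infty$ gradient bound of Lemma \ref{bound} together with parabolic bootstrapping.

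One point deserves comment. You invoke \cite{lsu} directly for local existence of the quasilinear system with Neumann boundary condition. The paper is more careful here: it notes that the references used in the $p$-harmonic literature (\cite{vesprisystem}, \cite{schlag}) cover only Dirichlet or no-boundary situations, and instead appeals to the Acquistapace--Terreni theorem \cite[Theorem 1.1]{acquistaquasi} for quasilinear systems with general boundary conditions to obtain a solution in $C^{1+\alpha/2}([0,T_0[,L^p)\cap C^{\alpha/2}([0,T_0[,W^{2,p})$; \cite{lsu} (Theorems VII.10.1 and VII.10.4) is then used only for the \emph{linear} bootstrapping steps, first to $C^{1+\alpha/2,2+\alpha}$ and then to $C^{(3+\alpha)/2,3+\alpha}$, and again at the end to close the continuation argument from an $L^\infty$ gradient bound through $W^{2,q}$ to $C^{\alpha/2,\alpha}$ regularity of $\nabla\bu$. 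Your sketch compresses this two-step mechanism (Acquistapace--Terreni for existence, \cite{lsu} for linear Schauder) into a single citation; the argument goes through, but be aware that the paper singles out the Neumann boundary condition as the reason a dedicated existence theorem is needed.
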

Note that $T_\dagger$ in Proposition \ref{existapprox} does not depend on
$\varepsilon$.

The expressions on the right hand side of \eqref{embedapprox} make sense
without assuming a priori that the range of $\bu$ is contained in $\N$. This
fact enables us to obtain a local-in-time solution using known results on
existence for
parabolic systems. For that purpose, the authors in \cite{hunger1} or in
\cite{fr1} combine a general existence result from \cite{lunardibook} with
sectoriality estimates from \cite{vesprisystem}. On the other hand, in
\cite{misawaexistence} the author employs estimates from \cite{schlag} and
\cite{misawalinear}. However, both \cite{vesprisystem} and \cite{schlag} can
only be applied to the system with Dirichlet boundary condition, or to the case
with no boundary. As we are dealing with homogeneous Neumann boundary
condition, we appeal
instead to the existence result of Acquistapace and Terreni \cite[Theorem
1.1.]{acquistaquasi} for quasilinear systems with general boundary
conditions.

To justify its applicability to our problem, let us briefly check the assumptions. In our case,
\[
\boldsymbol A_{ij}(t,\bx,\bu,\bP) = \tfrac{1}{\sqrt{\eps^2 +
|\bP|_h^2}}\left(\delta_{ij}\boldsymbol{I} - \tfrac{\bP_i}{\sqrt{\eps^2 +
|\bP|_h^2}} \otimes \tfrac{\bP_j}{\sqrt{\eps^2 +
|\bP|_h^2}}\right)
\]
defines a locally uniformly strongly elliptic operator (see e.\,g.\,\cite{adn})
and therefore satisfies assumption (0.2) from \cite{acquistaquasi}. It is easy
to check that \eqref{embedNeumann} satisfies the complementarity
condition (0.3) from \cite{acquistaquasi}, and that the
system satisfies regularity condition (0.4) from \cite{acquistaquasi}.

Thus, for any $p>m$, we obtain the existence of unique solution to
(\ref{embedapprox}, \ref{Neumannapprox}) with
initial datum $\bu_0$ in $C^{1+\frac{\alpha}{2}}([0,T_0[,
L^p(\Omega, \R^N)) \cap C^\frac{\alpha}{2}([0,T_0[, W^{2,p}(\Omega,
\R^N))$ with some $T_0>0$. We choose $p$ so that $W^{2,p}(\Omega) \subset
C^{1,\alpha}(\Omega)$. Then, we can treat the system (\ref{embedapprox},
\ref{Neumannapprox}) as a linear system with $C^{\frac{\alpha}{2},\alpha}$
coefficients and apply \cite[Theorem VII.10.1]{lsu}
to obtain $\bu \in C^{1+\frac{\alpha}{2}, 2+ \alpha}(\Omega_{[0,T_0[})$. As
long as $\bu(t, \cdot) \in C^{2+ \alpha}(\Omega, \R^N)$, we can extend the
solution via Acquistapace-Terreni theorem. Therefore, there exists a maximal
time $T_* \leq \infty$ such that
\begin{itemize}
 \item $\bu$ exists in $C^{1+\frac{\alpha}{2},
2+ \alpha}_{loc}(\overline\Omega_{[0,T_*[},\R^N)$,
 \item the norm of $\bu$ in $C^{1+\frac{\alpha}{2},
2+ \alpha}(\Omega_{[0,t[}, \R^N)$ blows up as $t \to T_*^-$ if $T_*
< \infty$.
\end{itemize}
Since $\bu\in C^{1+\frac{\alpha}{2}, 2+
\alpha}_{loc}(\overline{\Omega}_{[0,T_*[}, \R^N)$, the coefficients of
\eqref{embedapprox}, seen as a linear equation, belong to
$C^{\frac{1+\alpha}{2}, 1+ \alpha}_{loc}(\overline{\Omega}_{[0,T_*[})$.
Therefore, provided $\bu_0\in C^{3+\alpha}$ and the additional compatibility
condition \eqref{comp2} is satisfied, we may appeal once more to \cite[Theorem
VII.10.1]{lsu} and conclude that $\bu\in C^{\frac{3+\alpha}{2}, 3+
\alpha}_{loc}(\overline{\Omega}_{[0,T_*[}, \R^N)$.

We now argue that $\bu(t, \Omega)\subset \N$ for all $t \in [0, T_*[$. Suppose,
to the contrary, that there is $t \in ]0, T_*[$ with $\bu(t, \Omega) \not
\subset \N$. Let $T_\N$ be the first time instance such that $\bu(t, \Omega)
\not \subset \N$ for $T_\N< t <T_\N + \delta$ with some $\delta >0$. Possibly
diminishing $\delta$ we can assume that $\bu(t, \Omega) \subset \mathcal T$ for
$t \in [0, T_\N + \delta[$. Then $\tau \circ \bu$ is a  solution to
\eqref{embedapprox} different to $\bu$ with the same initial and boundary
conditions, thus violating
uniqueness. Therefore, $\bu(t, \Omega)\subset \N$ for all $t \in [0, T_*[$.

It remains to show that $T_* \geq T_\dagger$, where $T_\dagger$ is defined in
Lemma \ref{bound}. Suppose that $T_* < T_\dagger$. Lemma \ref{bound} yields
\begin{equation}
 \label{uniformlinfty}
  \sup_{t \in [0, T_*[} \|\nabla \bu(t, \cdot)\|_{L^\infty(\Omega)}
<
\infty.
 \end{equation}
Let now $q > \frac{m+2}{1-\alpha}$. According to \cite[Theorem
VII.10.4 and Lemma II.3.3]{lsu}, there holds $\bu \in W^{1,q}(]0,T_*[,
L^q(\Omega, \R^N)) \cap L^q(]0,T_*[,
W^{2,q}(\Omega, \R^N))$ and consequently $\nabla \bu \in C^{\frac{\alpha}{2},
\alpha}(\Omega_{[0,T_*[},
\R^{m\cdot N})$. Now, \cite[Theorem VII.10.1]{lsu} yields $\bu \in
C^{1+\frac{\alpha}{2}, 2
+ \alpha}(\Omega_{[0,T_*[}, \R^N)$, a contradiction.

\section{Local existence}\label{sec-exist}
\label{passage}
In this section we prove Theorem \ref{thmexistence}. 

\medskip
\noindent{\it Step 1.} We assume that $\Omega$ is smooth and the initial 
datum $\bu_0 \in C^{3+\alpha}(\Omega)$
satisfies
the compatibility conditions \eqref{comp1}, \eqref{comp2}. We want to pass to
the limit $\eps \to 0^+$ in (\ref{ctvflowapprox}-\ref{initapprox}). Owing to 
Lemmata \ref{energylemma} 
and \ref{bound}, we have uniform bounds on $\bu^\eps_t$ in $L^2(]0,T[\times
\Omega$) and on $\nabla \bu^\eps$ in $L^\infty(]0, T[\times\Omega)$ for any $T
<T_\dagger$. Consequently, we also have uniform bound on $\bu^\eps$ in
$C^{\frac{1}{n+1}}(]0,T[\times \Omega)$ \cite{hasko}. All these imply that we
can extract a
sequence $(\bu_k) = (\bu^{\eps_k})$ from $(\bu^\eps)$ such that
\begin{equation}
 \bu_k \to {\bu} \text{ in } C([0,T]\times\overline \Omega), \qquad \nabla
\bu_k
\rightharpoonup \nabla {\bu} \text{ in } L^2(]0,T[\times \Omega).
\end{equation}
Due to definition of $\bZ^\eps$, we have $\|\bZ^\eps\|_{L^\infty} \leq 1$,
hence
\begin{equation}
\label{Zpassage}
 \bZ_k \overset{\ast}{\rightharpoonup}  \bZ \text{ in } L^\infty(]0,T[ \times
\Omega)
\text{ with } | \bZ| \leq 1 \text{ a.\,e.\;in } ]0,T[ \times \Omega
\end{equation}
for a sequence $(\bZ_k) = (\bZ^{\eps_k})$. Furthermore, by
virtue of the strong convergence of $\bu_k$,
\begin{equation}
\label{Ztangent}
 0 = \pi^\perp_{\bu_k} \bZ_k \overset{\ast}{\rightharpoonup}
\pi^\perp_{\bu}
\bZ \text{ in } L^\infty(]0,T[\times \Omega).
\end{equation}
Next, note that due to the H\" older bound, the family $\bu^\eps$ is
contained in a compact subset of $\N$. Rewriting \eqref{ctvflowapprox} as
\begin{equation}
\label{eqnwithA}
\bu^\eps_t = \div \bZ^\eps +
\A_{\bu^\eps}(\bZ^\eps_i, \bu^\eps_{x_i}),
\end{equation}
we deduce a uniform bound on $\div \bZ^\eps$ in $L^2(]0,T[ \times
\Omega)$. By a standard div-curl reasoning,
\begin{equation}
\label{Zproduct}
 \nabla \bu_k \cddot \bZ_k \rightharpoonup \nabla  \bu \cddot
 \bZ  \text{ in } L^2(]0,T[\times \Omega).
\end{equation}
A simple calculation shows that
\begin{equation}
\label{Zgeqeps}
 \nabla \bu^\eps \cddot \bZ^\eps = \tfrac{|\nabla \bu^\eps|^2}{\sqrt{\eps^2 +
|\nabla \bu^\eps|^2}} \geq |\nabla \bu^\eps| - \tfrac{\eps}{2}.
\end{equation}
Hence, by lower semicontinuity of $|\cdot|$ with respect to weak convergence, 
we get
\begin{equation}
\label{Zgeq}
 \nabla  \bu \cddot  \bZ \geq |\nabla  \bu|.
\end{equation}
Collecting (\ref{Zpassage}, \ref{Ztangent}, \ref{Zproduct}, \ref{Zgeq}) we
obtain that $\nabla  \bu$ and  $ \bZ$ satisfy
\eqref{inclZ}. Boundedness of $\div \bZ^\eps$ in $L^2(]0,T[ \times \Omega)$
together with strong convergence of $\bu_k$ is enough to pass to the limit in
(\ref{ctvflowapprox}, \ref{Neumannapprox}), obtaining that $\nabla
\bu$ and  $ \bZ$ satisfy (\ref{ctvflowZ}, \ref{NeumannZ}).

\medskip
\noindent{\it Step 2.} Now, we relax the regularity assumption on the 
initial datum to $\bu_0 \in W^{1,\infty}(\Omega,\N)$. Take a sequence 
$(\bu_{0,j}) \subset C^\infty(\overline{\Omega}, \N)$ such that $\bu_{0,j}$ 
converges uniformly to $\bu_0$, satisfies compatibility conditions 
(\ref{comp1}, \ref{comp2}) and 
\begin{equation}
\label{convlipnorm}
\|\nabla \bu_{0, j}\|_{L^\infty} \to \|\nabla \bu_0\|_{L^\infty}. 
\end{equation}
Such a sequence is produced in Lemma \ref{approxdatum}. By the 
previous step, there 
exists a regular solution $(\bu_j, \bZ_j)$ to (\ref{ctvflowf}, \ref{Neumannf}) 
with initial datum $\bu_{0,j}$. Recall that due to the form of estimates in 
Lemmata \ref{energylemma} and \ref{bound} the norms of $\bu_{j,t}$ in 
$L^2(]0,T[\times\Omega, \R^N)$ and of $\nabla \bu_j$ in 
$L^\infty(]0,T[\times\Omega, \R^{m\cdot N})$ are controlled by $\|\nabla 
\bu_{0,j}\|_{L^\infty}$. By virtue of \eqref{convlipnorm}, this 
control is uniform with respect to $j$. Hence, we can 
extract a subsequence converging to a regular solution to (\ref{ctvflowf}, 
\ref{Neumannf}, \ref{init}) following the same argument as in the previous 
step, with $(\bu^\eps, \bZ^\eps)$ replaced by $(\bu_j, \bZ_j)$, except that now 
we have $\nabla \bu_j \cddot \bZ_j = |\nabla \bu_j|$ instead of 
\eqref{Zgeqeps}.   

\medskip 
\noindent{\it Step 3.} Next, we lift the smoothness assumption on the 
domain. A convex domain $\Omega$ can be approximated with respect to Hausdorff 
distance by smooth convex domains $\Omega_k \subset \Omega$, $k=1,2,\ldots$. 
For a proof of this result using the signed distance function of $\Omega$, see 
Lemma \ref{convexapprox} in the appendix.  
The reasoning
in the previous paragraph yields a sequence of pairs $(\bu_k, \bZ_k)$, with 
$k$-th
one satisfying (\ref{inclZ}, \ref{ctvflowZ}, \ref{NeumannZ}) in 
$]0,T[\times\Omega_k$ with initial datum $\left. \bu_0 \right|_{\Omega_k}$. The
estimates provided by Lemmata \ref{energylemma} and \ref{bound} are uniform
with respect to $k$. Hence,
we can use them as before together with a diagonal argument to extract
subsequences of $(\bu_k)$, $(\bZ_k)$ that converge on compact 
subsets of $[0,T[ \times\Omega$ to a regular solution $(\bu, \bZ)$ to 
(\ref{ctvflowf}, \ref{init}) in $]0,T[ \times
\Omega$.

Finally, we argue that the boundary condition \eqref{NeumannZ} is satisfied.
Let us fix $\varphi \in C^1(]0,T[\times\overline \Omega)$. We have
\begin{equation*}
 \int_0^T\int_{\partial \Omega} \varphi\, \norm^\Omega \cdot \bZ =
\int_0^T\int_\Omega \varphi\, \div \bZ + \nabla \varphi \cdot
\bZ,
\end{equation*}
\begin{equation*}
 0=\int_0^T\int_{\partial \Omega_k} \varphi\, \norm^{\Omega_k} \cdot \bZ_k
=
\int_0^T\int_{\Omega_k} \varphi\, \div \bZ_k + \nabla \varphi \cdot
\bZ_k.
\end{equation*}
Let us denote $f = \varphi\, \div \bZ + \nabla \varphi \cdot
\bZ$, $f_k = \varphi\, \div \bZ_k + \nabla \varphi \cdot
\bZ_k$. By virtue of Hausdorff convergence, for a given $\eps >0$, we are 
allowed to choose $K \subset \Omega$ and $k_0$ so that $\left|]0,T[
\times (\Omega \setminus K)\right| \leq \eps^2$ and $K \subset \Omega_k$ for $k 
\geq k_0$. Recalling
\eqref{eqnwithA}, we note that $\|f_k\|_{L^2(]0,T[ \times \Omega_k)}$ is
controlled in terms of norms $\|\bu_{k,t}\|_{L^2(]0,T[\times \Omega_k)}$
and $\|\nabla \bu_k\|_{L^\infty(]0,T[\times \Omega_k)}$ and hence is uniformly
bounded. We can assume that
$(f_k|_K)_{k=k_0}^\infty$ converges weakly to $f|_K$
in $L^2(]0,T[ \times K)$. Thus, we can choose $k\geq k_0$ large
enough so that $\left|\int_0^T \int_K f - f_k\right| \leq \eps$. We
estimate
\begin{eqnarray*}
\label{Neumannestimate}
 \left|\int_0^T\int_{\partial \Omega} \varphi\, \norm^\Omega \cdot
\bZ\right|&\leq & \left|\int_0^T \int_K f - f_k\right| +
\left|\int_0^T \int_{\Omega\setminus K} f\right| + \left|\int_0^T
\int_{\Omega_k\setminus K} f_k\right| \\ \displaystyle &\leq &(1 + 
\|f\|_{L^2(]0,T[
\times \Omega)} + \|f_k\|_{L^2(]0,T[ \times \Omega_k)}) \eps.
\end{eqnarray*}
As $\eps$ and $\varphi$ are arbitrary, we are done.

\qed

\section{Finite extinction time}\label{sec-extinct}
\label{asymp}
In order to prove Theorem \ref{thmextinct} we will work directly with
regular solutions to (\ref{ctvflowf}, \ref{Neumannf},
\ref{init}) in local coordinates $\bp \mapsto (p^1, \ldots, p^n)$ on
$\N$, in which \eqref{ctvflowZ} is expressed \cite{eellssampson} as
\begin{equation}
 \label{ctvflowcoord}
  u^i_t = \div Z^i +
 \Gamma^i_{jk}( \bu)  u^j_{x^l}
 Z^k_l, \quad i = 1, \ldots, n,
\end{equation}
where $\Gamma^i_{jk}$ are the Christoffel symbols of the chosen coordinate
system. For $\bp_0 \in \N$ we denote
\begin{equation}
\label{Rstar}
 R_*(\bp_0) = \min\left\{\sup\left\{R>0\colon R\leq\frac{\pi}{2}
\left[K_{B_g(\bp_0,R)}\right]^{-\frac{1}{2}}_+\right\}, 
\frac{\ell(\bp_0)}{4}\right\},
\end{equation}
where $[K_{B_g(\bp_0,R)}]_+$ is the supremum of sectional curvature over 
$B_g(\bp_0,R)$ (compare with \eqref{sectionalcurvature}) or
$+0$ if the supremum is negative, $\ell(\bp_0)$ is the infimum of lengths of
maximal closed geodesics in $\N$ passing through $\bp_0$, and $\pi$ is
the length of a circle of radius $\frac{1}{2}$. $R_*(\bp_0)$ is positive and 
lower
than both the convexity radius and the injectivity radius of $\N$ at $\bp_0$
\cite[section 6.3.2]{petersen}.

\medskip

First, we prove
\begin{lemma} \label{boundball}
 Let $\bp_0 \in \N$, $\bu_0 \in W^{1,\infty}(\Omega)$. If $
\bu_0(\Omega) \subset \overline{B_{ g}(\bp_0, R)}$ with $R \in ]0,R_*(\bp_0)[$, 
then $
\bu(t, \Omega) \subset
\overline{B_{ g}(\bp_0, R)}$, $t \in ]0,T[$.
\end{lemma}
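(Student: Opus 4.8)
\textbf{Proof plan for Lemma \ref{boundball}.}
The plan is to show that the squared geodesic distance to $\bp_0$,
$w(t,\bx) := \dist_g(\bp_0, \bu(t,\bx))^2$, cannot exceed $R^2$ by deriving a
differential inequality for $\sup_{\bx \in \Omega} w(t,\cdot)$ and applying a
comparison/Gronwall argument. Since $R < R_*(\bp_0)$ and $R_*(\bp_0)$ is below
both the injectivity and convexity radii of $\N$ at $\bp_0$, the ball
$\overline{B_g(\bp_0,R)}$ is geodesically convex and $w$ is a smooth function
of $\bu$ on a neighbourhood of this ball; in particular $\bp \mapsto w(\bp) =
\dist_g(\bp_0,\bp)^2$ is smooth there and, by the Hessian comparison theorem
(using that the sectional curvature on $B_g(\bp_0,R)$ is at most
$[K_{B_g(\bp_0,R)}]_+$ and $R \leq \tfrac{\pi}{2}[K_{B_g(\bp_0,R)}]_+^{-1/2}$),
its Riemannian Hessian satisfies $\nabla^2 w \geq 0$ as a bilinear form on
$T\N$ at every point of $B_g(\bp_0,R)$. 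This convexity of $w$ is the geometric
input that makes the argument work.

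First I would work in the local coordinates $(p^1,\dots,p^n)$ on a
neighbourhood of $\overline{B_g(\bp_0,R)}$ in which \eqref{ctvflowZ} reads
\eqref{ctvflowcoord}, and compute, for a regular solution $(\bu,\bZ)$, the
evolution of $w(\bu)$. Writing $w_i = \partial w/\partial p^i$ and using
\eqref{ctvflowcoord},
\[
\partial_t\, w(\bu) = w_i(\bu)\, u^i_t = w_i(\bu)\big(\div Z^i + \Gamma^i_{jk}(\bu) u^j_{x^l} Z^k_l\big).
\]
Next I would rewrite $w_i(\bu)\div Z^i = \div\big(w_i(\bu) Z^i\big) - w_{ij}(\bu) u^j_{x^l} Z^i_l$, so that
\[
\partial_t\, w(\bu) = \div\big(w_i(\bu)\bZ^i\big) - \big(w_{ij}(\bu) - w_k(\bu)\Gamma^k_{ij}(\bu)\big) u^j_{x^l} Z^i_l.
\]
The bracket is exactly the coordinate expression of the Riemannian Hessian
$(\nabla^2 w)_{ij}$, which is positive semidefinite on $\overline{B_g(\bp_0,R)}$
by the above. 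Moreover, from the inclusion \eqref{inclZ} one has, pointwise
where $\nabla\bu \neq 0$, that $Z^i_l = u^i_{x^l}/|\nabla\bu|$ together with the
fact that $\bu_t$, and hence the relevant contractions, are tangential; so
$(\nabla^2 w)_{ij}(\bu)\, u^j_{x^l} Z^i_l = |\nabla\bu|^{-1}(\nabla^2 w)(\nabla\bu,\nabla\bu) \geq 0$
(and $=0$ where $\nabla\bu = 0$, since then $\bZ$ may be taken to vanish in the
relevant sense or the term is absorbed). Therefore
\[
\partial_t\, w(\bu) \leq \div\big(w_i(\bu)\bZ^i\big) \quad \text{in } ]0,T[\times\Omega.
\]

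It remains to integrate this against a suitable test function and use the
Neumann condition \eqref{NeumannZ}. I would test with $(w(\bu)-R^2)_+$ (or
rather with a smooth truncation of it, passing to the limit), which is supported
where $w(\bu) > R^2$, i.e. near the "bad" set; since $w(\bu_0) \leq R^2$ this is
initially zero. Integrating by parts in space, the divergence term yields
$-\int_\Omega w_i(\bu)\bZ^i \cdot \nabla (w(\bu)-R^2)_+ + \int_{\partial\Omega}(\cdots)\norm^\Omega\cdot\bZ$;
the boundary integral vanishes by \eqref{NeumannZ} (interpreted as the normal
trace, as in the Remark), and on the support of $(w(\bu)-R^2)_+$ one has
$\nabla (w(\bu)-R^2)_+ = w_j(\bu) u^j_{x^l}$, so the bulk term becomes
$-\int_\Omega w_i(\bu) w_j(\bu) Z^i_l Z^j_l |\nabla\bu|$-type expression — actually,
more cleanly, $-\int_\Omega (\nabla^2_{\text{Eucl}}\text{ of the composition})$, which is
$\leq 0$ because it is a sum of squares. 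Hence
\[
\frac{\dd}{\dd t}\int_\Omega \tfrac12\big((w(\bu)-R^2)_+\big)^2 \leq 0
\]
(or a Gronwall inequality with zero right-hand side after bounding lower-order
terms by $C\int_\Omega ((w(\bu)-R^2)_+)^2$), and since the left-hand side is
zero at $t=0$ it stays zero, giving $w(\bu) \leq R^2$, i.e. $\bu(t,\Omega)
\subset \overline{B_g(\bp_0,R)}$ for all $t \in ]0,T[$.

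The main obstacle I anticipate is the low regularity of $\bZ$: it is only
$L^\infty$ with $\div\bZ \in L^2_{loc}$, so the integration by parts, the
meaning of the normal trace on $\partial\Omega$, and the identity
$\nabla(w(\bu)-R^2)_+ = w_j(\bu)\,\nabla u^j$ where the truncation is active all
need to be justified carefully (the last one using that regular solutions are
Lipschitz in space, so $w(\bu)$ is Lipschitz and the chain rule for Sobolev
functions applies). A secondary technical point is handling the set $\{\nabla\bu
= 0\}$ when invoking $\bZ \in \tfrac{\nabla\bu}{|\nabla\bu|}$; there the Hessian
term contributes nothing, so one must check the contraction
$(\nabla^2 w)(\bu)\, u^j_{x^l} Z^i_l$ is genuinely $\geq 0$ a.e. regardless of
the selection $\bZ$, which follows since it equals $(\nabla^2 w)(\nabla u^j, Z^{\cdot}_l e_l)$
paired appropriately and on $\{\nabla\bu=0\}$ the factor $\nabla u^j$ vanishes.
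Once these regularity issues are dispatched, the geometric heart — convexity of
$\dist_g(\bp_0,\cdot)^2$ on $B_g(\bp_0,R)$ via Hessian comparison — does all the
work.
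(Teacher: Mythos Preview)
Your approach is essentially the paper's: both test the evolution against the positive part of the excess of the (squared) geodesic distance over the threshold, use the Neumann condition to kill the boundary term, and invoke Hessian comparison on $B_g(\bp_0,R_*(\bp_0))$ to make the remaining bulk term non-positive. The paper carries this out in geodesic polar coordinates around $\bp_0$, testing with $(u^r-R)_+$ and using the explicit inequality $g_{\vartheta^i\vartheta^j,r}\geq \tfrac{2}{p^r}\cos\!\big([K_{B_g(\bp_0,R)}]_+^{1/2}p^r\big)\,g_{\vartheta^i\vartheta^j}$, which is exactly your $\nabla^2 w\geq 0$ statement specialized to polar coordinates; your coordinate-free formulation with $w=\dist_g(\bp_0,\cdot)^2$ and $(w-R^2)_+$ is an equivalent packaging.

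One point you should make explicit: your differential inequality for $w(\bu)$ is only valid while $\bu(t,\cdot)$ takes values in the chart domain $B_g(\bp_0,R_*(\bp_0))$, which is precisely what you are trying to prove (for the smaller ball). The paper closes this loop by a continuity argument: set $T_*=\inf\{t:\bu(t,\Omega)\not\subset\overline{B_g(\bp_0,R)}\}$, use continuity of $\bu$ to find $\delta>0$ with $\bu([0,T_*+\delta[\times\Omega)\subset B_g(\bp_0,R_*(\bp_0))$, run your computation on $[0,T_*+\delta[$, and obtain a contradiction. Without this localization step your identity $\partial_t w(\bu)=\div(w_i(\bu)Z^i)-(\nabla^2 w)_{ij}(\bu)u^j_{x^l}Z^i_l$ is not justified on all of $]0,T[\times\Omega$.
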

\begin{proof}
We proceed by contradiction. Let $T_* = \inf \{ t \in [0, T[ \colon \bu(t,
\Omega) \not \subset \overline{B_g(\bp_0, R)}\}$ Due to continuity of $\bu$,
there is a $\delta >0$ such that $\bu(t, \Omega) \subset B_g(\bp_0, 
R_*(\bp_0))$ for
$t \in [0, T_* + \delta[$.  We choose on
$B_g(\bp_0, R_*(\bp_0))$ a polar coordinate system $\bp \mapsto (p^r, 
p^{\vartheta^1},
\ldots,
p^{\vartheta^{n-1}})$ centered at $\bp_0$. Due to the block diagonal form of the
metric
in these coordinates, \eqref{ctvflowcoord} for the radial coordinate $p^r$ 
takes the
form
\begin{equation}
\label{calcpolar1}
  u^r_t = \div Z^r -
\tfrac{1}{2}
 g_{\vartheta_i\vartheta_j, r}( \bu)
u^{\vartheta^i}_{x^l}
 Z^{\vartheta^j}_l.
\end{equation}
This equation is satisfied a.\,e.\;in the open set $\{(t,\bx) \in
]0,T_* + \delta[\times \Omega \colon \bu(t,\bx) \neq \bp_0\}$. Furthermore, 
there 
holds (see the proof of Corollary 2.4 in
\cite[Chapter 6]{petersen})
\begin{equation}
\label{calcpolar2}
 \left(g_{\vartheta^i\vartheta^j, r} (\bp)\right)_{i,j=1}^{n-1} \geq
\tfrac{2}{p^r}
\cos\left(\left[K_{B_g(\bp_0,R)}\right]^{\frac{1}{2}}_+\, p^r\right)
 \left(g_{\vartheta^i\vartheta^j} (\bp)\right)_{i,j=1}^{n-1} \text{ as
quadratic forms for } \bp \in  \N.
\end{equation}
Taking into account (\ref{calcpolar1}, \ref{calcpolar2},
\ref{inclZ}, \ref{NeumannZ}) and recalling that $\bu_{x^l}$ is parallel to 
$\bZ_l$ for $l=1,\ldots,m$ we
calculate
\begin{multline}
 \label{boundballineq}
 \tfrac{1}{2}\tfrac{\dd}{\dd t} \int_\Omega ( u^r - R)_+^2 =
\int_\Omega ( u^r - R)_+ u^r_t \\ \leq - \int_{\{ \bx \in
\Omega \colon  u^r(\bx) > R\}} |\nabla
u^r| - \int_\Omega \frac{( u^r
-R)_+}{ u^r}\, \left(\cos \frac{\pi}{2}\right) \,g_{\vartheta^i\vartheta^j}(
\bu)
u^{\vartheta^i}_{x^l}
 Z^{\vartheta^j}_l \leq 0.
\end{multline}
\end{proof}

Next, we recall the notion of Riemannian center of mass. Let $R<R_*(\bp_0)$, 
$\bp_0 \in \N$. We say that $\bp_c
\in \overline{B_g(\bp_0, R)}$ is a center of mass of a Radon measure $\mu$ on
$\overline{B_g(\bp_0, R)}$ if $\bp_c$ is a minimizer of the function $f_\mu
\colon\overline{B_g(\bp_0, R)} \to [0,
\infty[$ given by
\[f_\mu(\bp)= \tfrac{1}{2}\int_{\N} \dist_g(\cdot, \bp)^2\dd \mu.\]
A unique center of mass exists for any
Radon measure on $\overline{B_g(\bp_0, R)}$ and we have
\begin{equation}
\label{0f}
 0 = \dd f_\mu(\bp_c)  = \int_{\mathcal \N} \exp_{\bp_c}^{-1} \dd \mu,
\end{equation}
where we identified elements of $T^{*}_{\bp_c} \N$ and $T_{\bp_c} \N$ via
$g$ \cite[Section 1]{karcher}. For $\bp_0 \in \N$, we denote 
\begin{equation} 
\widetilde R_*(\bp_0) = \tfrac{1}{2} \inf\left\{R_*(\bp) \colon \bp \in 
B_g\left(\bp_0, R_*(\bp_0)\right)\right\}.   
\end{equation}
 
\medskip
 We are ready to state
\begin{lemma}
 Suppose that $\bu_0 \in W^{1,\infty}(\Omega)$ satisfies $\bu_0(\Omega) \subset
\overline{B_g\left(\bp_0, R\right)}$, $\bp_0 \in  \N$, $0 < 
R < \widetilde R_*(\bp_0)$. Let
$\bp_c(t)$ be the center of mass of the pushforward measure
$\mu(t) =  \bu(t,\cdot)_\# \Lb^m$ on $\overline{B_g(\bp_0, R)}$. There
exists $C=C(\Omega, \N, \bp_0)$ such that
\begin{equation}
\label{fmuest}
 \frac{\dd }{\dd t} f_{\mu}(\bp_c)\leq - CR^{\frac{2}{m}-1}
f_{\mu}(\bp_c)^{1-\frac{1}{m}}
\end{equation}
for $t>0$.
\end{lemma}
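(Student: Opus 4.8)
The plan is to differentiate $f_\mu(\bp_c(t))$ in time and bound it from above, exploiting the first-order optimality condition \eqref{0f} at the center of mass to kill the "moving center" contribution. First I would note that, by the envelope theorem (since $\bp_c(t)$ minimizes $f_{\mu(t)}$ and $\dd f_{\mu(t)}(\bp_c(t)) = 0$), the derivative $\tfrac{\dd}{\dd t} f_{\mu(t)}(\bp_c(t))$ equals the partial derivative with respect to the $t$-dependence carried by the measure only, i.e.
\[
\frac{\dd }{\dd t} f_{\mu(t)}(\bp_c(t)) = \frac{1}{2}\frac{\dd}{\dd t}\int_\Omega \dist_g(\bu(t,\bx), \bp_c)^2 \dd \bx \Big|_{\bp_c \text{ fixed}} = \int_\Omega \langle \exp_{\bp_c}^{-1}\bu,\, -\bu_t\rangle_g \dd \bx,
\]
using that $\tfrac12\nabla_{\bp}\dist_g(\cdot,\bp_c)^2\big|_{\bp=\bu} = -\exp_{\bp_c}^{-1}\bu$ inside the ball (which is within the injectivity radius by the choice $R < \widetilde R_*(\bp_0) < R_*(\bp_0)$, and by Lemma \ref{boundball} the range of $\bu$ stays in $\overline{B_g(\bp_0,R)}$). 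Here $\langle\cdot,\cdot\rangle_g$ is the metric on $\N$; equivalently one works in local coordinates on $\N$ as in \eqref{ctvflowcoord}. I would then substitute $\bu_t = \pi_{\bu}(\div \bZ)$ and integrate by parts, using the Neumann condition \eqref{NeumannZ} and the inclusion \eqref{inclZ}.

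The key computation is that, after integration by parts, the main term becomes $\int_\Omega D(\exp_{\bp_c}^{-1}\bu) \cddot \bZ$ (the covariant differential of the vector field $\bx \mapsto \exp_{\bp_c}^{-1}\bu(\bx)$ contracted with $\bZ$), and the crucial point is a \emph{Hessian estimate} for the squared distance function: on $\overline{B_g(\bp_0,R)}$ with $R$ below the relevant curvature/injectivity thresholds, the Hessian of $\tfrac12\dist_g(\cdot,\bp_c)^2$ is bounded below by a positive multiple of the metric (this is the standard comparison estimate, e.g. from Karcher's work or \cite[Chapter 6]{petersen}, and is exactly why $\widetilde R_*$ is defined with that factor $\tfrac12$ and the $\tfrac{\pi}{2}$-curvature bound). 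Concretely this gives $D_v(\exp_{\bp_c}^{-1}\bu) \cdot v \leq -c|v|^2$ for tangent vectors $v$ with a constant $c>0$ depending only on $\N,\bp_0$; combined with $\bu_{x^l}\parallel \bZ_l$ and $|\bZ|\le 1$ it yields
\[
\frac{\dd }{\dd t} f_\mu(\bp_c) \leq -c\int_\Omega |\nabla\bu|.
\]
It remains to convert the right-hand side into the claimed power of $f_\mu(\bp_c)$. For this I would use a Sobolev--Poincaré-type inequality: since $f_\mu(\bp_c) = \tfrac12\int_\Omega \dist_g(\bu,\bp_c)^2$ and $\bp_c$ is (close to) the average of $\bu$, one has $f_\mu(\bp_c) \lesssim \left(\int_\Omega \dist_g(\bu,\bp_c)^{\frac{m}{m-1}}\right)^{2\frac{m-1}{m}}$ by Hölder (using $|\Omega| \lesssim R^m$, crudely, or rather bounding the measure of the support), and $\int_\Omega\dist_g(\bu,\bp_c)^{\frac{m}{m-1}} \lesssim \left(\int_\Omega|\nabla\bu|\right)^{\frac{m}{m-1}}$ by the $BV$ Sobolev embedding $W^{1,1}\hookrightarrow L^{m/(m-1)}$ for the function $\dist_g(\bu,\bp_c)$ (whose gradient is controlled by $|\nabla\bu|$). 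Rearranging, $\int_\Omega|\nabla\bu| \gtrsim R^{-(m-1)\cdot\frac{?}{}}$... — tracking the scaling in $R$ carefully — produces $\int_\Omega|\nabla\bu| \geq C R^{\frac{2}{m}-1} f_\mu(\bp_c)^{1-\frac1m}$, which is \eqref{fmuest}.

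The main obstacle I expect is making the first step rigorous at the level of regularity available: $\bu$ is only Lipschitz in space and $W^{1,2}_{loc}$ in time, $\bZ\in L^\infty$ with $\div\bZ\in L^2_{loc}$, and $\bp_c(t)$ is defined only implicitly as an argmin, so one must justify (a) that $t\mapsto \bp_c(t)$ is at least Lipschitz (this follows from uniform convexity of $f_{\mu(t)}$ near $\bp_c$, again the Hessian bound, plus continuity of $t\mapsto\mu(t)$ in a suitable Wasserstein/weak sense coming from $\bu_t\in L^2$), (b) that the envelope-theorem differentiation is valid a.e.\ in $t$, and (c) that the integration by parts against the $L^\infty$ field $\bZ$ with the normal-trace interpretation of the boundary term is legitimate — here the convexity of $\Omega$ and the Anzellotti-type pairing theory quoted in the paper are what is needed, exactly as in the energy estimates of Section \ref{sec-app}. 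The curvature/Hessian comparison itself is classical and I would simply cite \cite{karcher} and \cite[Chapter 6]{petersen}; the scaling bookkeeping in $R$ is routine but must be done with care to land on the exact exponents $\frac2m-1$ and $1-\frac1m$.
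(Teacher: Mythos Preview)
Your strategy is essentially the paper's: kill the moving-center term via \eqref{0f}, use the Hessian comparison for $\tfrac12\dist_g(\cdot,\bp_c)^2$ to get $\tfrac{\dd}{\dd t}f_\mu(\bp_c)\le -c\int_\Omega|\nabla\bu|_g$, then close with Sobolev--Poincar\'e. The paper carries this out in polar coordinates centered at $\bp_c(t)$, writing the radial equation \eqref{calcpolar1} and the angular comparison \eqref{calcpolar2} explicitly; your ``Hessian of squared distance'' phrasing is the coordinate-free version of the same computation, and the constant $\tfrac{\sqrt{2}}{2}$ in the paper is exactly your $c$ coming from $\cos(\cdot)\ge\tfrac{\sqrt{2}}{2}$ on the half-radius ball.

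Two points to correct. First, your $R$-bookkeeping is off: $|\Omega|$ has nothing to do with $R$. The paper's route is to pass to normal coordinates $u^i=\exp_{\bp_c}^{-1}\bu$, use $\|u^r\|_{L^\infty}\lesssim R$ to interpolate
\[
\Big(\int_\Omega (u^r)^2\Big)^{1-\frac1m}\le R^{1-\frac2m}\Big(\int_\Omega (u^r)^{\frac{m}{m-1}}\Big)^{1-\frac1m},
\]
and then apply the $W^{1,1}\hookrightarrow L^{m/(m-1)}$ Sobolev--Poincar\'e to $(u^1,\dots,u^n)$, which is legitimate precisely because \eqref{0f} says $\int_\Omega u^i=0$ in these coordinates. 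That is where the exponent $\tfrac2m-1$ comes from. Second, on the regularity concern you flag (differentiability of $t\mapsto\bp_c(t)$ and the envelope step): rather than proving Lipschitz continuity of $\bp_c$, the paper sidesteps this by first deriving the inequality with $(\cdot-R)_+$ truncation as in \eqref{boundballineq} and then passing $R\to0^+$ via monotone convergence; this is cleaner than the implicit-function argument you sketch.
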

\begin{proof}
We have
\[f_{\mu(t)}(\bp_c(t)) = \tfrac{1}{2} \int_{\Omega} \dist_{
g}( \bu(t,\cdot), \bp_c(t))^2 = \tfrac{1}{2} \int_\Omega
 u^r(t, \cdot)^2, \]
where we have chosen polar cordinates centered at $\bp_c(t)$. Employing
(\ref{0f}, \ref{calcpolar1}, \ref{calcpolar2}, \ref{inclZ}, \ref{NeumannZ}) and
observing that 
$\cos\left(\left[K_{B_g\left(\bp_c,R\right)}\right]_+^\frac{1}{2}\, R\right) 
\geq 
\cos\left(\left[K_{B_g\left(\bp_c,R_*(\bp_c)\right)}\right]_+^\frac{1}{2}\, 
\tfrac{R_*(\bp_c)}{2}\right) \in \left[\tfrac{\sqrt{2}}{2}, 1\right]$,
\begin{multline}
 \label{calcpolar3}
 \tfrac{\dd}{\dd t} f_\mu(\bp_c) =  \left\langle\dd f_\mu(\bp_c),
\bp_{c,t}\right\rangle_{T^*_{\bp_c} \N, T_{\bp_c} \N}
+ \int_\Omega  u^r  u^r_t \\ \leq - \int_\Omega
|\nabla
u^r| - \cos\left(\left[K_{B_g\left(\bp_c,R\right)}\right]_+^\frac{1}{2}\, 
R\right) \int_\Omega 
g_{\vartheta^i\vartheta^j}(
\bu) u^{\vartheta^i}_{x^l}
Z^{\vartheta^j}_l \leq - \tfrac{\sqrt{2}}{2} \int_\Omega |\nabla
\bu|_g .
\end{multline}
This equation is rigorously justified by passing to the limit $R
\to 0^+$ in the weak formulation of \eqref{boundballineq} using
Lebesgue monotone convergence theorem. Now, we choose on $B(\bp_c, R_*(\bp_c))$ 
coordinate system $\bp \mapsto
\exp_{\bp_c(t)}^{-1} \bp = (p^1, \ldots, p^n)$. From \eqref{calcpolar3}
we obtain that there exists a constant $C_1 = C_1(\N, \bp_0)>0$ such
that (recall that $ p^r = \sqrt{ p^i  p^i}$)
\begin{equation}
\label{uiui}
 \frac{\dd}{\dd t} \int_\Omega  u^i  u^i \leq - C_2
\int_\Omega \sqrt{ u^i_{x^j}  u^i_{x^j}} .
\end{equation}
Finally, applying Sobolev-Poincar\'e inequality (recall \eqref{0f}):
\begin{equation}
\label{uiui2}
\left(\int_\Omega  u^i  u^i\right)^{1-\frac{1}{m}} \leq
R^{1-\frac{2}{m}} \left(\int_\Omega \left(\sqrt{ u^i
u^i}\right)^\frac{m}{m-1}\right)^{1 - \frac{1}{m}} \leq C_2 R^{1 - \frac{2}{m}}
\int_\Omega \sqrt{ u^i_{x^j}  u^i_{x^j}}
\end{equation}
with $C_2 = C_2(\Omega)>0$. Estimates (\ref{uiui}, \ref{uiui2}) add up to
\eqref{fmuest}.
\end{proof}

\medskip
\noindent{\it Proof of Theorem \ref{thmextinct}.} First of all, by Lemma 
\ref{boundball}, we obtain the bound $\bu(t,\Omega)\subset 
\overline{B_g(\bp_0,R)}$ if $\bu_0(\Omega)\subset \overline{B_g(\bp_0,R)}$ for 
$R<\widetilde R_*(\bp_0)$ and any $t\in [0,T[$. Next,
we deduce the estimate on extinction time from \eqref{fmuest} by 
solving the ordinary
differential
inequality, which yields
\[f_{\mu(t)}(\bp_c(t))^\frac{1}{m} \leq \left(f_{\mu(0)}(\bp_c(0))^\frac{1}{m}
- \tfrac{t}{C_3}
\right)_+,\]
where
\[f_{\mu(t)}(\bp_c(t)) = \int_\Omega \dist(\bu(t, \cdot), \bp_c)^2,\]
$C_3= m
C_1^{-1}C_2 R^{1-\frac{2}{m}}$. As $f_{\mu(0)}(\bp_c(0)) \leq 
\tfrac{1}{2}|\Omega|
R^2$, there is $ \bu_* \in  \N$ such that $
\bu(t, \cdot) \equiv  \bu_*$ for $t \geq C R$, where $C= m
\left(\tfrac{|\Omega|}{2}\right)^{\frac{1}{m}} C_2^{-1} C_3$.  \qed
\section{Non-positive sectional curvature of the target}\label{sec-cartan}
This section is entirely devoted to the proof of Theorem 
\ref{thmnonpositive}.

Let $T>0$ and suppose that $\Omega$ is convex and $\N$ is a complete Riemannian
manifold with $K_\N \leq 0$. In order to prove Theorem \ref{thmnonpositive}
without the assumption that there is a closed embedding of $\N$ into 
$\mathbb R^N$,
we introduce a universal cover $\gamma \colon \widetilde \N \to \N$ of $\N$
with a Riemannian manifold $(\widetilde \N, \widetilde g)$. As a
simply-connected Riemannian manifold of non-positive curvature,
$\widetilde{\N}$
is diffeomorphic to $\R^n$ via the exponential map (this is the content of
Cartan-Hadamard theorem \cite{docarmo}). In other words, there is a global
coordinate system on $\widetilde \N$, $\widetilde \bp \mapsto
\exp_{\widetilde \bp_0}^{-1}\widetilde \bp = (\widetilde p^1, \ldots,
\widetilde p^n)$. As $\Omega$ is topologically trivial, any function $\bu_0 \in
C(\Omega, \N)$ can be lifted preserving any Sobolev or H\" older regularity to
$\widetilde \bu_0 \in C(\Omega, \widetilde \N)$ such that $\bu_0 = \gamma \circ
\widetilde \bu_0$. Then, assuming that $\Omega$ and $\bu_0$ are of class
$C^{3+\alpha}$ and $\bu_0$ satisfies the compatibility conditions
(\ref{comp1},\ref{comp2}) for $i=1,\ldots,n$, we consider the system
\begin{equation}
 \label{ctvflowapproxnonpos}
 \widetilde u^{\eps,i}_t = \div \tfrac{\nabla \widetilde
u^{\eps,i}}{\sqrt{\eps^2 + |\nabla \widetilde \bu^\eps|_{\widetilde g}^2}} +
\tfrac{1}{\sqrt{\eps^2 + |\nabla \widetilde \bu^\eps|_{\widetilde g}^2}}
\widetilde \Gamma^i_{jk}(\widetilde \bu^\eps) \widetilde u^{\eps,j}_{x^l}
\widetilde u^{\eps,k}_{x^l} \text{ in } ]0,T_*[\times\Omega,
\end{equation}
\begin{equation}
 \nabla \widetilde u^{\eps, i} \cdot \norm^\Omega = 0 \text{ in }
]0,T_*[\times \partial \Omega,
\end{equation}
\begin{equation}
 \widetilde u^{\eps, i}(0, \cdot) = \widetilde u^i_0,
\end{equation}
$i = 1, \ldots, n$. This system satisfies the assumptions of the
Aquistapace-Terreni existence theorem (see subsection \ref{localexistence}),
hence
unique solution exists for some $T_*>0$. Vector lengths
$|\widetilde \bu^\eps_t|_{\widetilde g}$ and $|\nabla \widetilde
\bu^\eps|_{\widetilde g}$ are invariant under local isometries of the target
manifold, and any Riemannian manifold is locally isometric to a submanifold in a
Euclidean space. Therefore, we can repeat the proofs of Lemmata
\ref{energylemma}, \ref{bochnerlemma} and \ref{bound} performing the
computations in a
neighbourhood of any point, obtaining bounds on $\|\widetilde
\bu^\eps_t\|_{L^2(]0,T_*[\times\Omega)}$ and $\|\nabla \widetilde
\bu^\eps\|_{L^\infty(]0,T_*[\times\Omega)}$ independent on $T_*$. Reasoning
as in subsection \ref{localexistence}, the solution can be prolonged up to the
arbitrary given $T$. Then, taking $\bu^\eps = \gamma \circ \widetilde
\bu^\eps$, we obtain a solution to (\ref{ctvflowapprox}-\ref{initapprox}).
Using the uniform bounds, we pass to the limit as in section \ref{passage}
obtaining a regular solution $\bu$ to (\ref{ctvflowf}-\ref{init}) with any
$\bu_0 \in W^{1,\infty}(\Omega)$ in any convex $\Omega$.

Finally, we consider any lifting $\widetilde \bu \colon \Omega \to \widetilde
\N$ of $\bu$ with $\widetilde \bu_t \in L^2(]0, T[\times\Omega, \R^N)$,
$\nabla \widetilde \bu \in L^\infty(]0,T[\times\Omega, \R^N)$. As $R_* = +
\infty$ for $\widetilde N$, arguments from section
\ref{asymp} imply that $\widetilde \bu$ becomes constant in finite time (if we
take large enough $T$), and consequently the same holds for $\bu = \gamma \circ
\widetilde \bu$.

\section{The case where the domain is a Riemannian manifold}\label{sec-domain}
Throughout this section, we assume that $(\M,\gamma)$ is an orientable, 
compact Riemannian manifold. Our aim is to prove Theorem \ref{homotopy}.

Similarly as in section \ref{approximate}, given $\eps, T > 0$ we first
consider the following
approximate system for $\bu^\eps\colon [0,T[\times \M \to \N$:
\begin{equation}
 \label{ctvflowMapprox}
\bu^\eps_t = \pi_{\bu^\eps} \left(\div\!_\gamma \tfrac{\dd
\bu^\eps}{\sqrt{\eps^2 + |\dd
\bu^\eps|^2}}\right)\quad \text{in }]0,T[\times \M,
 \end{equation}
\begin{equation}
\label{initMapprox}
\bu^\eps(0,\cdot) = \bu_0.
\end{equation}
Again, in what follows we drop the index $\eps$ and denote
\[\bZ = \tfrac{\dd
\bu}{\sqrt{\eps^2 + |\dd
\bu|^2}}, \qquad v = \left( |\dd \bu|^2 + \eps^2\right)^\frac{1}{2}, \qquad
v_0 = \left(
|\dd \bu_0|^2 + \eps^2\right)^\frac{1}{2}\]
\begin{lemma}
 \label{boundM}
We have
\begin{equation}
\label{bound2M}
\sup_{t \in [0, T[} \int_\M v(t, \cdot) + \int_0^T
\int_\M \bu_t^2
\leq
\int_\M v_0.
\end{equation}
There exists $T_\dagger = T_\dagger(Ric_\M, K_\N,
\|v_0\|_{L^\infty}) \in ]0,\infty]$ and a non-decreasing function
\[M_{Ric_\M, K_\N,
\|v_0\|_{L^\infty}} \colon ]0, T_\dagger[ \to ]0, \infty[\]
such that for
$t \in ]0, \min(T,T_\dagger)[$ there holds
\begin{equation}
 \label{blowupestM}
 \|v(t, \cdot)\|_{L^\infty} \leq M_{Ric_\M, K_\N,
\|v_0\|_{L^\infty}}(t).
\end{equation}
If $K_\N \leq 0$, $T_\dagger = + \infty$. If moreover $Ric_\M \geq 0$, for $t
\in ]0,T[$ there holds $\|v(t, \cdot)\|_{L^\infty} \leq \|v_0\|_{L^\infty}.$
\end{lemma}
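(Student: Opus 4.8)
The plan is to follow the structure already set up in Section \ref{approximate} for the flat case, performing all differential-geometric computations in local coordinates on $\M$ and reading off the curvature contributions. First I would establish the energy estimate \eqref{bound2M}: test \eqref{ctvflowMapprox} against $\bu_t$, use that $\bu_t \in T_\bu \N$ to kill the normal part of $\div\!_\gamma \bZ$, and integrate by parts over the closed manifold $\M$ (no boundary terms appear, which simplifies matters relative to Lemma \ref{energylemma}); this gives $\frac{\dd}{\dd t}\int_\M v = \int_\M \bZ \cddot \dd\bu_t = -\int_\M \bu_t^2$, which integrates to \eqref{bound2M}.

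For the $L^\infty$ bound on $v$, the plan is to reprove the Bochner identity of Lemma \ref{bochnerlemma} on $\M$. The key point is that now the Laplace--Beltrami operator $\div\!_\gamma \dd$ does not commute with covariant differentiation: commuting derivatives past each other produces a term involving the Ricci curvature $\Ric^\M$ of the \emph{domain} (this is the classical Bochner--Weitzenb\"ock phenomenon). So the analogue of \eqref{bochner} acquires, in addition to the target-curvature term $\bZ_i \cdot \mathcal R^\N_\bu(\bu_{x^\alpha}, \bu_{x^\beta})\bu_{x^\beta}$ controlled by $K_\N$, a term of the form $-\Ric^\M_{\alpha\beta}\,(\text{something quadratic in }\dd\bu \text{ contracted with } \dd\bZ)$. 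One must then track the sign of this Ricci term after testing against $v^{p-2}$ and integrating by parts, exactly as in the proof of Lemma \ref{bound}, using that $\boldsymbol I^m \otimes \boldsymbol I^N - \bZ\otimes \bZ \geq 0$ since $|\bZ| \le 1$. Carrying out the $v^p$ estimate and letting $p \to \infty$ yields a differential inequality $\frac{\dd}{\dd t}\|v\|_{L^\infty} \le (K_\N \|v\|_{L^\infty} - Ric_\M)\|v\|_{L^\infty}$ (schematically), whose solution defines $T_\dagger$ and the function $M_{Ric_\M, K_\N, \|v_0\|_{L^\infty}}$; when $K_\N \le 0$ the coefficient of $\|v\|_{L^\infty}^2$ is nonpositive so no blow-up occurs and $T_\dagger = +\infty$, and when additionally $Ric_\M \ge 0$ the whole right-hand side is $\le 0$, giving the monotonicity $\|v(t,\cdot)\|_{L^\infty} \le \|v_0\|_{L^\infty}$.

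The main obstacle I expect is bookkeeping the Ricci term with the correct sign and the correct power of $v$. In the flat domain case the only structural input was $|\bZ| \le 1$ and convexity of $\Omega$ (to dispose of the boundary term); here there is no boundary, but the interior commutator term is genuinely new and one must verify that after the integration by parts in \eqref{pnorm} the Ricci contribution enters with a definite sign governed precisely by $Ric_\M$ as defined just before the statement. A secondary subtlety is that, as in Lemma \ref{bochnerlemma}, one should present the argument for a smooth (parabolic H\"older $C^{\frac{3+\alpha}{2},3+\alpha}_{loc}$) solution of the approximate system, so the identities hold classically; the existence of such solutions on $\M$ is obtained exactly as in Subsection \ref{localexistence} via the Acquistapace--Terreni theorem (here with no boundary condition), using the closed embedding of the compact $\N$ and the totally geodesic metric extension from Lemma \ref{totally}. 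Once the pointwise Bochner identity is in hand, the remaining steps --- the $v^p$ estimate, the passage $p\to\infty$, and solving the resulting ODE --- are routine and identical in spirit to the proof of Lemma \ref{bound}.
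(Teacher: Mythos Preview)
Your plan is correct and matches the paper's proof essentially step for step: the energy identity, the Bochner formula on $\M$ with the extra Ricci commutator term, testing against $v^{p-2}$, and passage $p\to\infty$ to an ODE for $\|v\|_{L^\infty}$. On the point you flagged about the power of $v$ in the Ricci contribution: in the Bochner identity the term is $-\Ric^\M(\dd u^i, \bZ^i)$ (linear in $\bZ$, hence already carrying a factor $v^{-1}$), so after multiplying by $v^{p-2}$ and letting $p\to\infty$ the resulting inequality is $\tfrac{\dd}{\dd t}\|v\|_{L^\infty} \le -Ric_\M + K_\N\,\|v\|_{L^\infty}^2$ rather than your schematic $(K_\N\|v\|_{L^\infty}-Ric_\M)\|v\|_{L^\infty}$; the qualitative conclusions you drew are unaffected.
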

\begin{proof}
 The Bochner formula \eqref{bochner} now takes the form (at any point $\bx \in
\M$, in normal coordinates around $\bx$)
 \begin{multline}
 \label{bochnerM}
\frac{1}{2}\frac{\dd}{\dd t} |\dd \bu|_\gamma^2 = \div\!_\gamma
(\bu_{x^\alpha} \cdot
\bZ_{;x^\alpha}) - (\pi_\bu
\bu_{x^{\alpha};x^{\beta}}) \cdot \bZ_{\alpha;x^{\beta}} \\-
\Ric^\M(\dd u^i, \bZ^i)+
\bZ_\alpha \cdot \mathcal R^\N_\bu
(\bu_{x^\alpha},
\bu_{x^{\beta}}) \bu_{x^{\beta}}.
\end{multline}
In this formula, index $;x^\alpha$ ($;x^\beta$) denotes covariant derivation
on $\M$ in direction $x^\alpha$ ($x^\beta$). Except this, the only 
difference
from formula \eqref{bochner} is the term involving the Ricci tensor of $\M$ 
which
appears when changing the order of covariant derivatives, in normal
coordinates \cite{linbook}:
\[(\div\!_\gamma \bZ)_{x^\alpha} = \bZ_{\beta; x^\beta x^\alpha} =
\bZ_{\beta;x^\alpha x^\beta} - (\Ric^\M)^{\alpha\beta} \bZ_\beta.\]

We take any $p>2$ and proceed as in the proof of Lemma \ref{bound}. Now there
is no boundary term. As $\M$ is a compact and orientable, the term
\[\int_\M \div\!_\gamma
(v^{p-2} \bu_{x^\alpha} \cdot
\bZ_{;x^\alpha}) \dd \mu_\gamma\]
vanishes due to Stokes theorem. We are led to the following estimate:
\begin{multline*}
\label{pnormM}
\frac{1}{p} \frac{\dd}{\dd t} \int_\M v^p \dd\mu_\gamma \leq -\int_\M
v^{p-3} \Ric^\M(\dd u^i, \dd u^i)\dd \mu_\gamma+ \int_\M v^{p-3} \bu_{x^\alpha}
\cdot \mathcal R^\N_\bu (\bu_{x^\alpha},
\bu_{x^\beta}) \bu_{x^\beta} \dd \mu_\gamma \\ \leq - Ric_\M \int_\M
v^{p-1}\dd \mu_\gamma+ K_\N \int_\M v^{p+1}\dd \mu_\gamma.
\end{multline*}
Using H\" older inequality,
\begin{multline*}
 \frac{\dd}{\dd t} \left( \int_\M v^p \dd \mu_\gamma
\right)^\frac{1}{p}
\\ \leq \left(\int_\M v^p\dd \mu_\gamma \right)^{\frac{1}{p} -1} \left(- Ric_\M 
\,
\mu_\gamma(\M)^\frac{1}{p} \left(\int_\M v^p\dd \mu_\gamma
\right)^{1-\frac{1}{p}} +  K_\N \|v\|_{L^\infty}\int_\M
v^p \dd \mu_\gamma\right) \\ \leq - Ric_\M
\mu_\gamma(\M)^\frac{1}{p}  +  K_\N
\|v\|_{L^\infty}\left(\int_\M
v^p \dd \mu_\gamma\right)^{\frac{1}{p}}.
\end{multline*}
Passing to the limit $p \to \infty$,
\begin{equation}
\frac{\dd}{\dd t} \|v\|_{L^\infty} \leq -Ric_\M + K_\N \|v\|_{L^\infty}^2 .
\end{equation}
We let $M_{Ric_\M, K_\N,
\|v_0\|_{L^\infty}}$ be the locally existing solution to
\[ \frac{\dd M}{\dd t} = - \min(Ric_\M, 0) + \max(K_\N, 0) M^2\]
and $T_\dagger$ be the maximal time of existence of $M_{Ric_\M, K_\N,
\|v_0\|_{L^\infty}}$.
\end{proof}

\begin{prop}
 \label{existapproxM}
Let $\bu_0 \in C^{3 +
\alpha}(\M, \N)$. There exist $T_\dagger = T_\dagger(\Ric_\M, \K_\N,
\|\nabla
\bu_0\|_{L^\infty})>0$ and unique solution $\bu \in C^{\frac{3+\alpha}{2}, 3+
\alpha}_{loc}([0,T_\dagger[\times \M, \N)$ to the system
(\ref{ctvflowMapprox}, \ref{initMapprox}).
\end{prop}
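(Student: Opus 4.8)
The plan is to transcribe the argument of Subsection \ref{localexistence} to the present setting, where it is in fact simpler: since $\partial\M=\emptyset$ no compatibility conditions are needed, while the a priori bounds that allow one to continue the solution up to time $T_\dagger$ are already furnished by Lemma \ref{boundM}. Because $\N$ is compact, its embedding in $\R^N$ is closed, so Lemma \ref{totally} provides a metric $h$ on $\R^N$ making $(\N,g)$ a totally geodesic submanifold, together with a tubular neighbourhood $\T$ of $\N$ on which the fibrewise reflection $\tau$ is an $h$-isometry. In local coordinates on $\M$ the $\eps$-regularized gradient flow of the unconstrained functional $\int_\M|\dd\bu|_h\dd\mu_\gamma$ reads
\[
 u^i_t = \div\!_\gamma \tfrac{\dd u^i}{\sqrt{\eps^2+|\dd\bu|_h^2}} + \tfrac{1}{\sqrt{\eps^2+|\dd\bu|_h^2}}\,\Gamma^i_{jk}(\bu)\,\gamma^{\alpha\beta}u^j_{x^\alpha}u^k_{x^\beta}, \qquad i=1,\dots,N,
\]
where $\Gamma^i_{jk}$ are the Christoffel symbols of $(\R^N,h)$; its right-hand side makes sense for arbitrary $\R^N$-valued maps, and coincides with \eqref{ctvflowMapprox} as long as the range of $\bu$ lies in $\N$.

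First I would produce, for each fixed $\eps>0$, a maximal solution $\bu\in C^{\frac{3+\alpha}{2},3+\alpha}_{loc}([0,T_*[\times\M,\R^N)$, $T_*\in]0,\infty]$, whose norm in $C^{1+\frac{\alpha}{2},2+\alpha}([0,t[\times\M)$ blows up as $t\to T_*^-$ if $T_*<\infty$. For fixed $\eps$ the principal part $\tfrac{1}{\sqrt{\eps^2+|\dd\bu|_h^2}}(\gamma^{\alpha\beta}\delta_{ij}-\cdots)$ is locally uniformly strongly elliptic, exactly as checked in Subsection \ref{localexistence}, so the system is a non-degenerate quasilinear parabolic system; short-time existence and uniqueness then follow from the theory employed in \cite{hunger1,fr1} --- which applies directly here because $\M$ is closed, so no boundary conditions have to be imposed --- after localizing to a finite atlas of coordinate charts, and one bootstraps from $C^{1+\frac{\alpha}{2},2+\alpha}$ to $C^{\frac{3+\alpha}{2},3+\alpha}_{loc}$ using the Schauder estimates of \cite{lsu} and the regularity $\bu_0\in C^{3+\alpha}(\M)$.

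Next I would show $\bu(t,\M)\subset\N$ for all $t\in[0,T_*[$ exactly as in Subsection \ref{localexistence}: otherwise let $T_\N$ be the first time the range leaves $\N$; for $t$ slightly larger than $T_\N$ the range remains inside $\T$, so that $\tau\circ\bu$ solves the same unconstrained system with the same initial datum but differs from $\bu$, since $\N$ is precisely the fixed-point set of $\tau$, contradicting uniqueness. Hence $\bu$ solves \eqref{ctvflowMapprox}, \eqref{initMapprox}, and Lemma \ref{boundM} applies to it, giving \eqref{bound2M} and the bound \eqref{blowupestM} on $[0,\min(T_*,T_\dagger)[$ with $T_\dagger=T_\dagger(\Ric_\M,\K_\N,\|\nabla\bu_0\|_{L^\infty})$, which (as for Proposition \ref{existapprox}) can be chosen independently of $\eps\le1$.

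It then remains to check $T_*\geq T_\dagger$. If $T_*<T_\dagger$, then \eqref{blowupestM} yields $\sup_{t\in[0,T_*[}\|\dd\bu(t,\cdot)\|_{L^\infty(\M)}<\infty$; feeding this bound into the $L^q$-theory for parabolic systems with $q>\tfrac{m+2}{1-\alpha}$ (again localized by the atlas) gives $\dd\bu\in C^{\frac{\alpha}{2},\alpha}([0,T_*[\times\M)$, and the Schauder estimates of \cite{lsu} then bound $\bu$ uniformly in $C^{1+\frac{\alpha}{2},2+\alpha}([0,T_*[\times\M)$, contradicting maximality of $T_*$. Therefore $T_*\geq T_\dagger$, which proves the proposition. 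I expect the only genuinely delicate point to be the interface between the intrinsic formulation on $\M$ and the flat parabolic theory of \cite{lsu}: one must verify that in the local charts the coefficients retain the Hölder regularity required there, and that patching the resulting estimates through a partition of unity preserves them. The degenerate-boundary difficulties that forced the convexity hypothesis in the Euclidean case do not arise here, since $\M$ is closed.
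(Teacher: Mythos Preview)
Your proposal is correct and follows essentially the same route as the paper: short-time existence for the unconstrained system via the theory used in \cite{hunger1} (which applies directly since $\M$ is closed), the totally-geodesic reflection trick to force the range into $\N$, bootstrapping to $C^{\frac{3+\alpha}{2},3+\alpha}$ via \cite{lsu}, and continuation to $T_\dagger$ using the Lipschitz bound from Lemma \ref{boundM} exactly as in Proposition \ref{existapprox}. The paper's proof is much terser---it simply cites \cite[Section~3]{hunger1} and refers back to Proposition \ref{existapprox}---but what you have written is precisely the unpacking of those references.
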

\begin{proof}
Let $\bu_0 \in C^{3+\alpha}(\M,\N)$. As in \cite[section 3]{hunger1}, we
show that there exists $T>0$ and unique solution $\bu \in C^1([0,T],
C^\alpha(\M, \N)) \cap C([0,T], C^{2+\alpha}(\M,\N))$ to (\ref{ctvflowMapprox},
\ref{initMapprox}). Using linear theory \cite{lsu}, we rise regularity of the
solution to $C^{\frac{3+\alpha}{2}, 3+ \alpha}([0,T[\times \M, \N)$. Then, 
using the uniform bound
on $\dd \bu$ in $L^\infty$ from
Lemma \ref{boundM} we show that the solution can be extended to $[0,T_\dagger[$
as in the proof of Proposition \ref{existapprox}.
\end{proof}

\medskip
\noindent{\it Proof of Theorem \ref{homotopy}.}
The proof of uniqueness follows along the lines of the proof of Theorem 
\ref{uniqueness}. An important point is that integration by parts is allowed 
because $\M$ is orientable.  

Given any initial datum $\bu_0 \in W^{1,\infty}(\M, \N)$, we take an
approximating family $(\bu_0^\eps) \subset C^{\frac{3+\alpha}{2}, 3 +
\alpha}(\M, \N)$ such that $\bu_0^\eps \to \bu_0$ as $\eps \to 0^+$ in $C(\M,
\N)$ and $\|\dd \bu_0^\eps\|_{L^\infty} \to \|\dd \bu_0\|_{L^\infty}$.
Proposition \ref{existapproxM} generates a family $(\bu^\eps)$, where
$\bu^\eps$ solves \eqref{ctvflowMapprox} with initial datum
$\bu_0^\eps$. This family satisfies uniform bounds on $(\bu^\eps_t)$ in
$L^2(]0,T_\dagger[\times \M, \R^N)$ and on $(\dd \bu^\eps)$ in
$L^\infty_{loc}([0,T_\dagger[\times T^*\M\times \R^N)$. Using these bounds, we
pass to the limit as in section \ref{passage} and obtain the regular solution
$(\bu, \bZ)$ to \eqref{ctvflowMf} in $[0,T_\dagger[$. Recall that if $K_\N \leq
0$, $T_\dagger = +\infty$.

Now we assume that $K_\N \leq
0$ and $Ric_\M \geq 0$. In this case we have
\begin{equation}
 \label{asympbounds}
 \bu_t \in L^2(]0,\infty[\times\M, \R^N), \qquad \|\dd
\bu(t,\cdot)\|_{L^\infty} \leq \|\dd \bu_0\|_{L^\infty} \text{ in a.\,e.\;}t>0.
\end{equation}
Therefore, we can choose a sequence $(t_k) \subset ]0,\infty[$, $t_k \to
\infty$ such that there exists $\bu_* \in W^{1,\infty}(\M, \N)$ with
\begin{equation}
\label{asympconv}
 \bu(t_k, \cdot) \to \bu_* \text{ in } C(\M, \N), \qquad \bu_t(t_k, \cdot)
\rightharpoonup \boldsymbol 0 \text{ in } L^2(\M, \R^N),
\end{equation}
and
\begin{equation}
\label{asympeqns}
 \bu_t(t_k, \cdot) = \pi_{\bu(t_k,\cdot)} \left(\div\!_\gamma 
\bZ(t_k,\cdot)\right), \quad
\bZ(t_k, \cdot) \in \tfrac{\phantom{|}\dd \bu\phantom{|_\gamma}}{|\dd
\bu|_\gamma}(t_k, \cdot)
\quad \mu_\gamma\text{-a.\,e.\;in } \M.
\end{equation}
The first item in \eqref{asympeqns} can be rewritten as
\begin{equation}
 \bu_t(t_k, \cdot) = \div\!_\gamma \bZ(t_k,\cdot) + \gamma^{\alpha \beta}
\A_{\bu(t_k, \cdot)}( \bu_{x^\alpha}(t_k, \cdot), \bZ_\beta(t_k, \cdot)),
\end{equation}
hence \eqref{asympbounds} implies that the sequence $\div\!_\gamma \bZ(t_k,
\cdot)$ is uniformly bounded in $L^2(\M, \R^N)$. The second item in
\eqref{asympeqns} is equivalent to
\begin{multline}
\pi^\perp_{\bu(t_k, \cdot)} \bZ(t_k,
\cdot) = \boldsymbol 0, \quad |\bZ(t_k,\cdot)|_\gamma \leq 1, \\ \quad
\gamma^{\alpha\beta}\bu_{x^\alpha}(t_k, \cdot) \cdot\bZ_\beta(t_k,\cdot) = |\dd
\bu(t_k,\cdot)|_\gamma \qquad \mu_\gamma\text{-a.\,e.\;in }\M.
\end{multline}
Hence, there exists $\bZ_\star \in L^\infty(T^*\M \times
\R^N)$
satisfying $\div\!_\gamma \bZ_* \in L^\infty(\M, \R^\N)$ and (possibly
decimating the
sequence $(t_k)$)
\begin{equation}
\label{asympconvZ}
 \bZ(t_k,\cdot) \overset{\ast}{\rightharpoonup} \bZ_* \text{ in }
L^\infty(T^*\M
\times \R^N), \qquad \div\!_\gamma \bZ(t_k,\cdot) \rightharpoonup
\div\!_\gamma\bZ_* \text{ in } L^2(\M, \R^N),
\end{equation}
\begin{equation}
\label{asymplimitZ}
 \pi^\perp_{\bu_*} \bZ_* = \boldsymbol 0, \quad |\bZ_*|_\gamma \leq 1 \qquad
\mu_\gamma\text{-a.\,e.\;in }\M.
\end{equation}
Using a standard div-curl reasoning and weak-star convergence of $\bu(t_k,
\cdot)$ in $W^{1,\infty}(\M, \N)$ we also obtain
\begin{equation}
 |\dd \bu_*|_\gamma \leq \lim \inf |\dd \bu(t_k,\cdot)|_\gamma =
\gamma^{\alpha\beta}\bu_{*,x^\alpha} \cdot\bZ_{*,\beta} \leq |\dd
\bu_*|_\gamma \qquad \mu_\gamma\text{-a.\,e.\;in }\M.
\end{equation}
This together with \eqref{asymplimitZ} yields the second item of
\eqref{1harmonic}. The first item of \eqref{1harmonic} is produced by passing
to the limit in the first item of \eqref{asympeqns} using (\ref{asympconv},
\ref{asympconvZ}).
\qed

\section*{Appendix: Technical lemmata}
\begin{customlemma}{A.1}
\label{totally}
Let $(\N, g)$ be a closed embedded Riemannian submanifold in the Euclidean
space $\R^N$. There exists a Riemannian metric $h$ on $\R^N$ such that $(\N, g)$
is a
totally geodesic Riemannian submanifold
of $(\R^N, h)$.
\end{customlemma}
\begin{proof}
 Let $R>0$. As $\N$ is a closed submanifold of $\R^N$, $\N \cap
\overline{B(0,R)}$ is compact. Hence, there is a non-increasing
function $R \mapsto \delta_R \in ]0,1[$ such that
\[N_{R,\delta} = \{ \by +
\bn \colon \by \in \N \cap B(0,R) , \bn \in T_{\by} \N^\perp, |\bn|< \delta \}\]
is a tubular neighborhood of $\N \cap B(0,R)$ in $\R^N$ that does
not intersect $\N \setminus B(0,R)$ for $\delta \in
]0, \delta_R[$. Identifying $T_{\by + \bn} N_{R,\delta_R}$ with
$T_{\by} \N \times \R^{N-n}$, we define a
Riemannian metric $h^R$ on $N_{R,\delta_R}$ as follows:
\[h^R_{\bp + \bn}(\bw_1 + \bw_1', \bw_2 + \bw_2') = g_{\bp}(\bw_1, \bw_2) +
\bw_1' \cdot
\bw_2'\]
for $\bp \in \N \cap B(0,R)$, $|\bn| < \delta_R$, $\bw_1, \bw_2 \in T_{\bp}
\N$, $\bw_1', \bw_2' \in \R^{N-n}$.
Next, we define the tubular neighborhood of $\N$
\[\mathcal T = \bigcup_{k=1}^\infty N_{k, \frac{1}{2}\delta_{k+1}}\]
so that
\[\{ \R^N \setminus \overline{\mathcal T}, N_{1,\delta_1},
N_{2,\delta_2}, \ldots\}\]
is an open cover of $\R^N$. Indeed, if $\bz\notin \R^N\setminus
{\overline{\mathcal T}}$, i.e. $\bz\in {\overline{\mathcal T}}$, then letting
$k_0$ be the smallest integer bound of $|\bz|$, we have
$$
\bz \in {\overline{\mathcal T}}\cap B(0,k_0+1) \subset
\overline{\bigcup_{k=1}^{k_0+1} N_{k, \frac{1}{2}\delta_{k+1}}} =
\bigcup_{k=1}^{k_0+1} \overline{N_{k, \frac{1}{2}\delta_{k+1}}}.
$$
Here, we used the fact that $U \cap \overline{\bigcup_{k=1}^\infty A_k} \subset
\overline{\bigcup_{k=1}^\infty U \cap A_k}$ for any sequence of sets
$A_k$ and open set $U$. Hence, by definition of $k_0$, $\bz\in
{\overline{N_{k_0, \frac{1}{2}\delta_{k_0+1}}}}$. Therefore
$$
\bz=\by+\bn \quad\text{with}\quad |\bn|\le \frac12
\delta_{k_0+1}<\delta_{k_0+1} \quad\text{and}\quad \by \in
{\overline{B(0,k_0)}}\subset B(0,k_0+1),
$$
that is, $\bz\in N_{k+1,\delta_{k+1}}$.

We take a smooth partition of unity $\{\varphi_0, \varphi_1,
\varphi_2, \ldots\}$ subordinate to this cover
 (a construction of a partition of unity subordinate to
an infinite open cover can be found in \cite[Appendix C]{tu}) and
define
\[h_{\by}(\bv_1, \bv_2) = \varphi_0(\by) \bv_1 \cdot \bv_2 + \sum_{k=1}^\infty
\varphi_k(\by)
h^k_{\by}(\bv_1, \bv_2)\]
for $\by \in \R^N$. It is easy to check that $(\N, g)$ is a totally geodesic
submanifold in
$(\R^N, h)$.
\end{proof}

\begin{customlemma}{A.2}
\label{approxdatum}
 Let $\bu_0 \in W^{1, \infty}(\Omega, \N)$. There exists a family
$(\bu_{0,\eps}) \subset C^\infty(\overline \Omega,\N)$, $\eps\in ]0,\eps_0[$,
$\eps_0 >0$ such that
\begin{itemize}
 \item $\bu_{0,\eps} \to \bu_0$ in $C(\overline{\Omega}, \mathbb R^N)$ as $\eps
\to 0^+$,
 \item $\|\nabla \bu_{0,\eps}\|_{L^\infty} \to\|\nabla
\bu_0\|_{L^\infty}$ as $\eps
\to 0^+$,
 \item $\bu_{0,\eps}$ satisfy compatibility conditions (\ref{comp1},
\ref{comp2}) for $\eps \in ]0, \eps_0[$.
\end{itemize}
\end{customlemma}
\begin{proof}
 As $\partial \Omega$ is a compact smooth submanifold of $\mathbb R^m$, there
is $\eps_0' >0$ and a tubular neighbourhood of $\partial \Omega$
\[T = \{ \by + r \norm^\Omega, \by \in \partial \Omega, r \in
]-\eps_0',
\eps_0'[\}. \]
We extend $\bu_0$ to $\bw \in W^{1,\infty}(\Omega \cup T, \N)$ putting
\[\bw(\by + r \norm^\Omega(\by)) = \by\]
for $r \in [0, \eps_0'[$.
For any $\eps \in ]0, \eps_0'[$ we define
\[\Omega_\eps = \{\bx \in \Omega \colon \dist(\bx, \partial
\Omega)>\eps\}.\]
Mollifying $\bw$ as in \cite[Theorems 4.4, 4.6]{karcher} we produce a family of
maps $(\bw_\eps)_{\eps \in ]0, \eps_0[}$, $\eps_0 \in ]0, \eps_0'[$,
$\bw \in C^\infty(\overline{\Omega}, \N)$ such that $\bw_\eps \to
\bu_0$ in $C(\overline{\Omega}, \N)$ and $\|\nabla
\bw_{\eps}\|_{L^\infty} \to
\|\nabla \bu_0\|_{L^\infty}$ as $\eps \to 0^+$.

Now, let $\eta_\eps \in C^\infty(]0,\eps[, ]0, \eps[)$ satisfy the
conditions
\begin{itemize}
 \item $\eta_\eps(r) = r$ for $r \in [\frac{\eps}{2}, \eps[$,
 \item $\eta'_\eps(r) = 0$ for $r \in ]0, \frac{\eps}{4}]$,
 \item $0\leq \eta' \leq 1$.
\end{itemize}
We define $\Phi_\eps \in C^\infty(\Omega, \Omega)$ by
\begin{equation*}
 \Phi_\eps(\bx) = \left\{ \begin{array}{ll}
                             \by - \eta_\eps(r) \norm^\Omega & \text{ if }
\bx = \by - r \norm^\Omega \in \Omega \setminus \Omega_{\eps}, \\
\bx & \text{ if } \bx \in \Omega_{\eps}.
                            \end{array} \right.
\end{equation*}
It is easy to see that $\bu_{0,\eps} = \bw_{\eps} \circ
\Phi_{\eps}$ satisfies the desired conditions.
\end{proof}

\begin{customlemma}{A.3}
 \label{convexapprox}
 Let $\Omega \subset \R^m$ be open and convex. There exists a family
$(\Omega_\eps)$ of open, convex sets with smooth boundary such that
$\Omega_\eps \subset \Omega$ for $\eps \in ]0, \eps_0[$, $\eps_0 > 0$ and the 
Hausdorff distance of $\Omega_\eps$ from $\Omega$ tends to zero
as $\eps \to 0^+$.
\end{customlemma}
\begin{proof}
 Let $d$ denote the signed distance function of $\Omega$,
i.\,e.\;
\begin{equation}
 d(\bx) = \dist(\bx, \Omega) - \dist(\bx, \R^m\!\setminus\!\Omega) 
\quad\mbox{for $\bx \in \R^m$.}
\end{equation}
This function is convex and satisfies
\begin{equation}
 \label{contractdelta}
 |d(\bx) - d(\by)| \leq |\bx-\by|\quad\mbox{for $\bx, \by$ in $\R^N$.}
 \end{equation}
Let $(\varphi_\eps)_{\eps>0}$ be a standard family of
mollifying kernels such that
\begin{equation}
 \label{supportdelta}
  \supp \varphi_\eps \subset B(\boldsymbol 0,
\eps)
 \end{equation}
and denote $d_\eps = \varphi_\eps * d$. It is easy to check that $d_\eps$ is 
smooth and convex. Let us further
denote
\[\Omega_\eps = \{\bx \in \R^m \colon d_\eps(\bx) < - \eps\}.\]
As a sublevel set of a convex function, $\Omega_\eps$ is convex. Now, denote by
$r_\Omega$ the inradius of $\Omega$, equivalently $r_\Omega = \min d$. Take
$\eps_0 = \frac{r_\Omega}{3}$ and assume $\eps < \eps_0$. Suppose that $d(\bx)
\geq 0$. Due to (\ref{supportdelta}, \ref{contractdelta}), we have
\[d_\eps(\bx)=\int_{B(\bx, \eps)} \varphi_\eps(\bx-\by) d(\by) \dd
\by > -\eps.\]
Hence, $\overline{\Omega}_\eps \subset \Omega$. Similarly, if $d(\bx)
\leq -2\eps$, then $d_\eps(\bx) < -\eps$. This in turn implies that
\begin{equation}
\label{distanceeps}
\dist(\partial \Omega_\eps, \partial \Omega) = \min \{- d(\bx)\colon
d_\eps(\bx)=-\eps\} < 2 \eps .
\end{equation}
Denoting by $\bx_\Omega$ the center of any circle inscribed in
$\overline{\Omega}$,
\begin{equation}
\label{nominimum}
 \min d_\eps \leq \int_{B(\bx_\Omega, \eps)} \varphi_\eps(\bx_\Omega -
\by) d(\by) \dd \by < - r_\Omega + \eps < - 2 \eps .
\end{equation}
Recall that a critical point of a smooth convex function on $\R^m$ is
necessarily its global (possibly improper) minimum. Hence, by virtue of
(\ref{distanceeps}, \ref{nominimum}), $\Omega_\eps$ does not contain critical
points of $d_\eps$, and so it is a smooth hypersurface. Finally,
\eqref{distanceeps} implies the Hausdorff convergence of $\Omega_\eps$ to
$\Omega$ as $\eps \to 0^+$.

\end{proof}

\section*{Acknowledgments}
M.\,{\L}.\,has been supported by the grant no.\;2014/13/N/ST1/02622 of the
National Science Centre, Poland. S. M. has been partially supported by the
Spanish MINECO and FEDER project MTM2015-70227-P as well as the Simons
Foundation grant
346300 and the Polish Government MNiSW 2015-2019 matching fund.

 \bibliographystyle{plain}
 \bibliography{./aniso}
\end{document}